\newcounter{citedtheorems}
\newcounter{theoremcounter}
\newtheorem{defn}[theoremcounter]{Definition}
\newtheorem{theorem}[theoremcounter]{Theorem}
\newtheorem*{theorem-m}{Theorem \ref{main-theorem}}
\newtheorem*{theorem-x}{Theorem}
\newtheorem*{theorem-abs1}{Theorem \ref{ind-theorem}}
\newtheorem*{theorem-abs2}{Theorem \ref{a23}}
\newtheorem*{theorem-abs3}{Theorem \ref{ind-new}}
\newtheorem*{theorem-abs4}{Theorem \ref{m1}}
\newtheorem{main-claim}[theoremcounter]{Main Claim}
\newtheorem{thm-lit}[citedtheorems]{Theorem}
\newtheorem{defn-lit}[citedtheorems]{Definition}
\newtheorem{fact-lit}[citedtheorems]{Fact}
\newtheorem{fact}[theoremcounter]{Fact}
\newtheorem{defn-claim}[theoremcounter]{Definition/Claim}
\newtheorem{concl}[theoremcounter]{Conclusion}
\newtheorem{conv}[theoremcounter]{Convention}
\newtheorem{claim}[theoremcounter]{Claim}
\newtheorem{rmk}[theoremcounter]{Remark}
\newtheorem{ntn}[theoremcounter]{Notation}
\newtheorem{disc}[theoremcounter]{Discussion}
\newtheorem{expl}[theoremcounter]{Example}
\newtheorem{qst}[theoremcounter]{Question}
\newtheorem{problem}[theoremcounter]{Problem}
\newcommand{\br}{\vspace{2mm}}
\newcommand{\eq}{\operatorname{eq}}
\newcommand{\ml}{\mathcal{L}}
\newcommand{\tlf}{\trianglelefteq}
\newcommand{\rn}{\operatorname{Range}}
\newcommand{\dom}{\operatorname{Dom}}
\newcommand{\acl}{\operatorname{acl}}
\newcommand{\dcl}{\operatorname{dcl}}
\newcommand{\lgn}{\operatorname{lg}}
\newcommand{\tpqf}{\operatorname{tp}_{\operatorname{qf}}}
\newcommand{\qftp}{\operatorname{tp}_{\operatorname{qf}}}
\newcommand{\tp}{\operatorname{tp}}
\newcommand{\xd}{\mathbf{d}}
\newcommand{\ocirc}{\astrosun}
\newcommand{\posl}{\operatorname{Pos}}
\newcommand{\negl}{\operatorname{Neg}}
\newcommand{\ii}{\mathbf{i}}
\newcommand{\ts}{\mathbf{S}}
\newcommand{\mck}{\mathcal{K}}
\newcommand{\rstr}{\upharpoonright}
\newcommand{\vp}{\varphi}
\newcommand{\ma}{\mathbf{a}}
\newcommand{\mb}{\mathbf{b}}
\newcommand{\trg}{T_{\mathbf{rg}}}
\newcommand{\GEM}{\operatorname{GEM}}
\newcommand{\xm}{\mathfrak{m}}
\newcommand{\tcb}{}
\newcommand{\xc}{\mathbf{c}}
\newcommand{{\xw}}{\mathbf{w}}
\newcommand{\xr}{\mathfrak{r}}
\newcommand{\mk}{\mathcal{K}}
\title{Shearing in some simple rank one theories}
\author{M. Malliaris and S. Shelah} 
\thanks{\emph{Thanks:} Research partially supported by NSF CAREER award 1553653, 
by an NSF-BSF award 
(NSF 2051825, BSF 3013005232), and ISF grant 1838/19. 
Paper 1221 in Shelah's list.}
\address{Department of Mathematics, University of Chicago, 5734 S. University Avenue, Chicago, IL 60637, USA}
\email{mem@math.uchicago.edu}
\address{Einstein Institute of Mathematics, Edmond J. Safra Campus, Givat Ram, The Hebrew
University of Jerusalem, Jerusalem, 91904, Israel, and Department of Mathematics,
Hill Center - Busch Campus, Rutgers, The State University of New Jersey, 110
Frelinghuysen Road, Piscataway, NJ 08854-8019 USA}
\email{shelah@math.huji.ac.il}
\urladdr{http://shelah.logic.at}
\begin{document}

\begin{abstract}  
Dividing asks about inconsistency along indiscernible sequences.   In order to study the finer structure of simple theories without 
much dividing, the authors recently introduced \emph{shearing}, which essentially asks about inconsistency along generalized 
indiscernible sequences.  Here we characterize the shearing of the random graph. We then use shearing to distinguish 
between the random graph and the theories $T_{n,k}$, the higher-order analogues of the triangle-free random graph.  
It follows that shearing is distinct from dividing in simple unstable theories, and distinguishes meaningfully between 
classes of simple unstable rank one theories. 
The paper begins with an overview of shearing, and includes open questions. 
\end{abstract}

\maketitle 

\vspace{3mm}

This paper is dedicated to Moshe Jarden, in honor of his many contributions to field arithmetic. 

One of the central points of contact between fields and model theory is the definition of forking/dividing, developed in the second author's 
book \cite{Sh:a}, which significantly abstracts the notions of algebraic independence in algebraically closed fields, or linear independence in 
vector spaces.  Informally, dividing asks about inconsistency along indiscernible sequences. 
 This definition has substantial explanatory power within stable theories, but outside of stable theories, it appears that also 
new ideas are needed.  Thus, in order to study the finer structure of simple theories without 
much dividing, the authors recently introduced \emph{shearing}, which essentially asks about inconsistency along generalized 
indiscernible sequences.   The aim of this paper is to further develop this very interesting definition. 

The paper \cite{MiSh:1149} proved a first separation theorem showing shearing 
is a priori useful for detecting differences in complexity in simple unstable theories. 
The proof that this notion is strictly weaker than dividing, and that it can be found in the random graph and indeed in 
any theory with the independence property,  
was deferred to the present work. Below, we carry out the characterization of shearing in the random graph announced there. 
We then show that shearing can distinguish between the random graph and the theories $T_{n,k}$, the higher-order analogues of the triangle-free random graph studied by Hrushovski. Perhaps this may open the door for a more careful structural analysis of simple rank one theories  
(whose structure is not visible to dividing) via shearing. 
Along the way we review what is known and record many natural questions. 

{We thank the anonymous referee for careful readings and very helpful reports.}

\section{Preliminaries}
\setcounter{theoremcounter}{0}

In this section we recall the central definition of \emph{shearing}, 
Definition \ref{e5}. The reader may prefer to look ahead, or to read on for the motivated development. 

The basic idea will be that whereas dividing corresponds to inconsistency along an indiscernible sequence, 
shearing corresponds to inconsistency along a generalized indiscernible sequence. One initial reason it might be hoped this would 
give some power is that generalized indiscernible sequences arise from the skeletons of $\GEM$-models.\footnote{Generalized Ehrenfeucht-Mostowski 
models -- informally, like Ehrenfeucht-Mostowski models except that we allow the index models to be reasonable expansions of 
linear orders, varying within an index model class $\mk$; 
for a recent exposition, see the early sections of \cite{MiSh:1124}, \cite{MiSh:1149}, \cite{Sh:E59}, \cite{Sh:300}.}  So 
from consistency or inconsistency along such sequences in a given $\GEM$-model one might hope to produce larger $\GEM$-models in which 
a given type was realized, or stayed omitted.  Going further, one might hope to show a difference in complexity between 
theories in this way, say, by showing that if $T_2$ has recurrent shearing for a certain kind of generalized indiscernible sequence and $T_1$ does not, 
then it would be possible to build a model of a theory interpreting both of them whose reduct to $T_1$ is quite saturated and whose reduct to 
$T_2$ is (say) not even $\aleph_1$-saturated. This was done in \cite{MiSh:1149}, using $\GEM$-models, and working through those 
proofs allowed us to arrive at the present definition of shearing. 
Once identified, the definition makes sense in any context, not only that of $\GEM$-models. 

We shall use three ideas from the model theory of generalized Ehrenfeucht-Mostowski models: 
that of an index model class $\mk$, that of a context $\xc$, and that of a generalized indiscernible sequence for such a class, which we call $\mk$-indiscernible. In various guises, these have long histories in model theory (a partial list might include 
\cite{EM}, \cite{Rabin}, \cite{Sh:a}, \cite{Sh:300}, \cite{scow2}, \cite{ghs}, \cite{MiSh:1124}, \cite{MiSh:1149}). 
To balance the demands of keeping the paper short but also reasonably self-contained, we point the reader to where these are clearly written down, and give here an English summary.

\emph{Index model class}: See \cite{MiSh:1149} Definitions 2.3, p. 4 and 2.9, p. 5. Briefly, 
$\mk$ is a class of linearly ordered models, closed under isomorphism but not necessarily an elementary class, 
in a signature expanding $\{ < \}$. We require that $\mk$ is universal \tcb{(closed under submodels and increasing chains)} and that for every $I \in \mk$ there is some $\aleph_0$-saturated 
$J  \in \mk$ extending it. (Since the class is not necessarily elementary, ``$\aleph_0$-saturated'' always abbreviates 
``$\aleph_0$-universal and $\aleph_0$-homogeneous''.) Finally, the class has to be Ramsey, in the usual sense of $\GEM$-models 
(or equivalently, by a theorem of Scow, in the sense of  Ne\v{s}et\v{r}il \cite{ns} and
of Kechris-Pestov-Todor\v{c}evi\'{c} \cite{kpt}; see Scow \cite{scow2} Theorem 4.31 and see \cite{Sh:a} Chapter VII, 
\cite{Sh:300} III. 1.5-1.15 pps. 327--332, \cite{Sh:E59}).  
Simple examples include: the class of linear orders, or the class of linear orders 
partitioned by countably many unary predicates $\{ P_n : n < \omega \}$, where note that requiring the countably many predicates to 
partition the domain (every element has a color; some colors may be empty) makes it not an elementary class. 

\emph{Context}: See \cite{MiSh:1149} Definition 2.12. Briefly, a context $\xc = (I, \mk)$ is a choice of index model class $\mk$ along with a 
choice of $I \in \mk$ which is not obviously trivial.  For example, taking $\mk$ to be the class of linear orders 
and asking for universality means $\mk$ contains lots of finite linear orders, but it is not so useful to choose one of them when asking about 
uniform inconsistency.  The various conditions essentially rule out analogues of this:  $I$ is closed under functions, if any;  
$I$ is not generated by any of its finite subsets; \tcb{$I$ is subject to certain mild technical conditions concerning algebraicity which will 
be given explicitly later on, and entail that 
things which only appear finitely many times in saturated extensions of $I$ do so for a reason}.  

A \emph{countable context} just means that, in addition, $I$ is countably infinite. 

\emph{$\mk$-indiscernible}:  See \cite{MiSh:1149} Section 3, or below. 
In a usual indiscernible sequence in a model $M$, 
indexed by a linear order $(I, <)$, if the finite sequences $\bar{t}, \bar{s}$ from $I$ have the same order-type 
[i.e., the same quantifier-free type in the language of order] 
then the corresponding tuples of elements they 
index have the same type in $M$. In general, if $I$ belongs to some $\mk$, then we may ask that if 
$\bar{t}, \bar{s}$ have the same quantifier-free type in $I$ then the corresponding tuples of elements they 
index have the same type in $M$. 
In the full definition, one more level of generality is implicit:

\begin{defn}[$\mk$-indiscernible sequence] \label{d:k-ind}
Let $\xc= (I, \mk)$. Suppose $N \models T$, $A \subseteq N$, and $f: {^{\omega >}I} \rightarrow 
{^{\omega>}N}$. For each $\bar{t} \in \dom(f)$, write $\bar{b}_{\bar{t}}$ for $f(\bar{t})$. Say $\mb = \langle \bar{b}_{\bar{t}} : 
\bar{t} \in {^{\omega >}I} \rangle$ is a $\mk$-indiscernible sequence over $A$ when: for all $k <\omega$, 
all $\bar{t}_0, \dots, \bar{t}_{k-1}$, $\bar{t}^\prime_0, \dots, \bar{t}^\prime_{k-1}$ from ${^{\omega >}I}$, if 
$\tpqf({\bar{t}_0}^\smallfrown {\bar{t}_1}^\smallfrown \dots ^\smallfrown {\bar{t}_{k-1}}, \emptyset, I) = 
\tpqf({\bar{t}^\prime_0} ~^\smallfrown {\bar{t}^\prime_1} ~^\smallfrown \dots ^\smallfrown {\bar{t}^\prime_{k-1}}, \emptyset, I)$ then 
$\lgn({\bar{a}_{\bar{t}_i}}) = \lgn(\bar{a}_{\bar{t}^\prime_i})$ for $i < k$, and 
\[ \tp({\bar{a}_{\bar{t}_0}} ~^\smallfrown \dots ^\smallfrown {\bar{a}_{\bar{t}_{k-1}}}, A, N) = 
\tp({\bar{a}_{\bar{t}^\prime_0}} ~ ^\smallfrown \dots ^\smallfrown {\bar{a}_{\bar{t}^\prime_{k-1}}}, A, N). \] 
\end{defn}
Observe that in \ref{d:k-ind}, requirements on coherence of these maps between arities are conspicuously absent. For instance, 
$f(t_1 t_2)$ isn't required to be the concatenation of $f(t_1)$ with $f(t_2)$, \tcb{and so some sequences can map to $\emptyset$.} 

\begin{rmk} \label{rmk:r-seq}
\tcb{
Here is a useful construction which fits the definition just given. 
Choose a particular quantifier-free $k$-type $\xr$ of some sequence $\bar{t}$ of elements of $I$. 
Choose some $\aleph_0$-saturated $J \in \mk$ with $I \subseteq J$. Consider the set  $\xr(J) = \{ \bar{s} : \tpqf(\bar{s}, \emptyset, J) = \tpqf(\bar{t}, \emptyset, I) \}$. Consider a map $f$ taking each $\bar{s}$ in $\xr(J)$ to some $\bar{a}_{\bar{s}}$ in $M$, 
so that the types of any two such sequences in $M$ are the same. 
Extend $f$ to ${^{\omega> }J}$ by sending anything outside $\xr(J)$ to the empty set $($or to a suitable single fixed element$)$. 
Then the image of $f$ is a $\mk$-indiscernible sequence.  A further special case of this is starting with a 
generalized indiscernible sequence $\langle a_t : t \in I \rangle$ indexed 
by some $I \in \mk$, and looking at tuples indexed by a fixed quantifier-free type. 
In the present paper, $\mk$-indiscernible sequences of the forms just described, for some $\xr$ to be specified in each case, 
will generally suffice.} 
\end{rmk}

\begin{expl} \label{e:236} 
\emph{The following may help understanding, as will be explained.} 
\end{expl}

The sequence $\langle \bar{\sigma}^M(\bar{a}_{\bar{t}}) : \bar{t} \in \xr(I) \rangle$ is indiscernible in $M$ when: 
\begin{enumerate}
\item[(a)] $I \in \mk$ is an index model, 

\item[(b)] $M$ is a model in the signature $\tau$, 

\item[(c)] $\langle \bar{a}_t : t \in I \rangle$ is an indiscernible sequence in $M$, 

\item[(d)] $\bar{a}_{\bar{t}} = \langle a_{t_\ell} : \ell < n \rangle$ when $\bar{t} \in {^n I}$. 

\item[(e)] $\bar{\sigma}(x_{[n]}) = \langle \sigma_\ell(\bar{x}_{[n]}) : \ell < k \rangle$ where $\sigma_\ell$ is a function symbol in $\tau$, 
or a term.\footnote{Notation: $\bar{x}_{[n]}= \langle x_\ell : \ell < n \rangle$.} 

\item[(f)] $\bar{\sigma}^M(\bar{a}_{\bar{t}}) = \langle \sigma^M_\ell (\bar{a}_{\bar{t}}) : \ell < k \rangle$

\item[(g)] $r(\bar{x}_{[n]}) \in \{ \tpqf(\bar{t}, \emptyset, I) : \bar{t} \in {^n I} \}$. 

\item[(h)] and $\xr(I) = \{ \bar{t} \in {^n I} : \bar{t} $ realizes the quantifier-free type $r$ in $I$ $\}$. 

\end{enumerate}
In more detail, 
the reader may wonder about how the Ramsey property may interact with an arbitrary $\mk$-indiscernible sequence of the form $\langle \bar{b}_{\bar{t}} : \bar{t} \in \xr(I) \rangle$, since the Ramsey property deals with sequences indexed by singletons.  To explain this, recall that the Ramsey property says the following. 

\begin{defn} We say the class $\mk$ is Ramsey when: given any 

a) $J \in \mk$ which is $\aleph_0$-saturated

b) model $M$, and 

c) sequence $\mb = \langle \bar{b}_t : t \in J \rangle$ of finite sequences from $M$, 

with the length of $\bar{b}_t$ determined by $\tpqf(t, \emptyset, J)$, 

\noindent there exists\footnote{recall that a template $\Psi$ is called proper for $I$ if there exists a (generalized) Ehrenfeucht-Mostowski model of the form $M = \operatorname{GEM}(I, \Psi)$, i.e. these instructions are coherent and give rise to a model over an $I$-indexed skeleton.} a template $\Psi$ which is proper for $\mk$ such that:

\noindent i) $\tau(M) \subseteq \tau(\Psi)$,

\noindent ii) $\psi$ reflects $\mb$ in the following sense:

for any $s_0, \dots, s_{n-1}$ from $J$, 

any $\theta = \theta(x_0, \dots, x_{m-1})$ from $\ml(\tau(M))$, 

and any $\tau(M)$-terms $\sigma_\ell(\bar{y}_0, \dots, \bar{y}_{n-1})$ for $\ell = 0, \dots, m-1$, 

\underline{if} $M \models \theta[\sigma_0(\bar{b}_{t_0}, \dots, \bar{b}_{t_{n-1}}), \dots, \sigma_{m-1}(\bar{b}_{t_0}, \dots, \bar{b}_{t_{n-1}})]$ 

for every $t_0, \dots, t_{n-1}$ realizing $\tpqf({s_0}^ \smallfrown ~\cdots ~^\smallfrown {s_{n-1}}, \emptyset, J)$ in $J$, 

\underline{then} $\operatorname{GEM}(J, \Psi) \models 
 \theta[\sigma_0(\bar{a}_{t_0}, \dots, \bar{a}_{t_{n-1}}), \dots, \sigma_{m-1}(\bar{a}_{t_0}, \dots, \bar{a}_{t_{n-1}})]$

where $\langle \bar{a}_s : s \in J \rangle$ denotes the skeleton of $\operatorname{GEM}(J, \Psi)$. 
\end{defn}

\textcolor{black}{
In this paper, we will be interested in inconsistency along generalized indiscernible sequences: see Definition \ref{e5} below, and observe there in item (5) that the inconsistency is always within a particular $\xr(I)$.  So let us verify the following.
}

\textcolor{black}{
Let $\mb = \langle \bar{b}_{\bar{t}} : \bar{t} \in \xr(J) \rangle$ be a $\mk$-indiscernible sequence in the sense of Remark \ref{rmk:r-seq}, where 
$J \in \mk$ is $\aleph_0$-saturated. In particular, this sequence may be the range of a function whose domain is the finite sequences of elements of $J$, but which sends any sequence not in $\xr(J)$ to $\emptyset$. }

\textcolor{black}{
Suppose that for some $\vp$, the set of formulas $\{ \vp(\bar{x}, \bar{b}_{\bar{t}}) : \bar{t} \in \xr(J) \}$ is contradictory.  
Let $M_0 = \GEM(J, \Phi)$ be a generalized EM model with skeleton $\langle \bar{a}_s : s \in J \rangle$.  Observe that in $M_0$, for any 
$\bar{t} \in \xr(J)$, the expression $\bar{a}_{\bar{t}}$ makes sense, and because this is an actual skeleton of an actual GEM model, writing 
$\bar{t} = \langle t_0, \dots, t_{k-1} \rangle$, necessarily 
$\bar{a}_{\bar{t}} = {\bar{a}_{t_0}}~^\smallfrown ~\cdots ~^\smallfrown {\bar{a}_{t_{k-1}}}$. }

\begin{claim}
\textcolor{black}{
In the context just given, 
there is a template $\Psi \geq \Phi$ and $N = \GEM(J, \Psi)$ an extension of $M_0$ with the same skeleton $\langle \bar{a}_s : s \in J \rangle$, and a finite sequence of 
$\ell(\bar{a}_s)$-ary functions $\bar{F} = \langle F_0, \dots, F_{\ell(\bar{b}_{\bar{t}})-1} \rangle$ in $\tau(\Psi)$, 
so that in $N$,  the set of formulas $\{ \vp(\bar{x}, \bar{F}(\bar{a}_{\bar{t}})) : \bar{t} \in \xr(J) \}$ is contradictory.
}
\end{claim}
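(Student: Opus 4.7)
The plan is to realize each coordinate of $\bar{b}_{\bar{t}}$ as the value of a new function symbol applied to $\bar{a}_{\bar{t}}$ inside an expansion of an ambient model containing both $M_0$ and the sequence $\mb$, and then invoke the Ramsey property of $\mk$ on the singleton-indexed skeleton $\langle \bar{a}_s : s \in J\rangle$ to carry the inconsistency into a GEM-model built from a suitably enlarged template. Write $k$ for the common length of tuples $\bar{t} \in \xr(J)$ and $m = \ell(\bar{b}_{\bar{t}})$, and fix a $\tau(\Phi)$-model $M^\circ$ extending $M_0$ and containing the sequence $\mb$.

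Enlarge $\tau(\Phi)$ by new function symbols $\bar{F} = \langle F_0, \dots, F_{m-1}\rangle$, each of arity $\ell(\bar{a}_{\bar{t}}) = k \cdot \ell(\bar{a}_s)$, and interpret them in $M^\circ$ by setting $F_i(\bar{a}_{t_0} ~^\smallfrown \cdots ~^\smallfrown \bar{a}_{t_{k-1}}) = (\bar{b}_{\bar{t}})_i$ whenever $\bar{t} = \langle t_0, \dots, t_{k-1}\rangle \in \xr(J)$, and arbitrarily elsewhere; the $\mk$-indiscernibility of $\mb$ makes this prescription consistent on the relevant inputs. Call the resulting expansion $M^+$, and apply the Ramsey property of $\mk$ to the singleton-indexed sequence $\langle \bar{a}_s : s \in J\rangle$ viewed in $M^+$: this produces a template $\Psi$ proper for $\mk$ with $\tau(\Psi) \supseteq \tau(M^+)$, which one may arrange to dominate $\Phi$, so that $N := \GEM(J, \Psi)$ extends $M_0$ with the same skeleton.

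To verify inconsistency in $N$, fix a finite subset $\{\vp(\bar{x}, \bar{b}_{\bar{t}^j}) : j < n_0\}$ witnessing the failure in $M$, and let $r^* := \tpqf(\bar{t}^0 ~^\smallfrown \cdots ~^\smallfrown \bar{t}^{n_0 - 1}, \emptyset, J)$. By $\mk$-indiscernibility of $\mb$, the sentence
\[ \forall \bar{x}\, \bigvee_{j < n_0} \neg\,\vp\bigl(\bar{x},\, \bar{F}(\bar{a}_{s^j_0}, \dots, \bar{a}_{s^j_{k-1}})\bigr) \]
holds in $M^+$ for every realization $\langle s^j_\ell : j < n_0,\ \ell < k\rangle$ of $r^*$ in $J$; writing this sentence in the form $\theta[\sigma_0(\bar{a}_{t_0}, \dots, \bar{a}_{t_{n_0 k - 1}}), \dots]$ for the terms $\sigma_{jm+i}(\bar{y}_0, \dots, \bar{y}_{n_0 k - 1}) := F_i(\bar{y}_{jk}, \dots, \bar{y}_{jk + k - 1})$, the reflection clause of the Ramsey property gives the same sentence in $N$, so $\{\vp(\bar{x}, \bar{F}(\bar{a}_{\bar{t}})) : \bar{t} \in \xr(J)\}$ is contradictory there. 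The only piece of bookkeeping requiring care is precisely this last translation, which matches Ramsey's singleton-indexed reflection to the tuple-indexed inconsistency by absorbing the $k$-ary grouping of $\xr(J)$ into the terms $\sigma_\ell$; a minor secondary point, the well-definedness of the $F_i$ in the face of possible accidental skeleton-coincidences, can be handled by choosing the skeleton sufficiently generic.
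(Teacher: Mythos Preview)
Your proof is correct and follows essentially the same approach as the paper's own argument: embed $M_0$ and $\mb$ in a common model, add function symbols $\bar{F}$ sending $\bar{a}_{\bar{t}}$ to $\bar{b}_{\bar{t}}$, extract a finite witness to the contradiction whose validity depends only on the quantifier-free type of the concatenated index tuple (by $\mk$-indiscernibility), and then invoke the Ramsey property to transfer this to $N = \GEM(J,\Psi)$. Your added remarks on the singleton-to-tuple translation via the terms $\sigma_{jm+i}$ and on possible skeleton coincidences are reasonable bookkeeping clarifications that the paper leaves implicit, but they do not constitute a different method.
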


\begin{rmk}
This claim does not assert that the inconsistency is witnessed by the skeleton, just that it is witnessed in the $\GEM$-model. 
\end{rmk}

\begin{proof}
\textcolor{black}{Let $M$ be an elementary extension of $M_0$ which contains $\mb$.  Expand the language to include the function symbols $\bar{F}$ (the $\sigma$'s of example \ref{e:236} above). 
Interpret them so that for each $\bar{t} \in \xr(J)$, $\bar{F}(\bar{a}_{\bar{t}}) = \bar{b}_{\bar{t}}$. }

\textcolor{black}{Since $\{ \vp(\bar{x}, \bar{b}_{\bar{t}}) : \bar{t} \in \xr(J) \}$ is contradictory, there must be some finite set witnessing it, say, 
$\{ \bar{r}_0, \dots, \bar{r}_{\ell-1} \} \subseteq \xr(J)$ so that $\{ \vp(\bar{x}, \bar{b}_{\bar{r}_0}), \dots, \vp(\bar{x}, \bar{b}_{\bar{r}_{\ell-1}}) \}$ is 
contradictory. Since $\mb$ is $\mk$-indiscernible, this depends only on the quantifier-free type of 
$ \bar{r}_0 ~^\smallfrown \cdots ^\smallfrown~ \bar{r}_{\ell-1}$.  Let $\theta$ be a formula so that $\theta[ \bar{b}_{\bar{r}_0}, \dots, \bar{b}_{\bar{r}_{\ell-1}}]$ expresses  that $\{ \vp(\bar{x}, \bar{b}_{\bar{r}_0}), \dots, \vp(\bar{x}, \bar{b}_{\bar{r}_{\ell-1}}) \}$ is contradictory. }

\textcolor{black}{Let $N = \GEM(J, \Psi)$ be given by the Ramsey property.  Then for any sequence of tuples 
$ \bar{s}_0 ~^\smallfrown \cdots ^\smallfrown~ \bar{s}_{\ell-1}$ 
of the same quantifier-free type as $ \bar{r}_0 ~^\smallfrown \cdots ^\smallfrown~ \bar{r}_{\ell-1}$ in $J$, 
 $\theta[\bar{F}(\bar{a}_{\bar{s}_0}), \dots, \bar{F}(\bar{a}_{\bar{s}_{\ell-1}})]$ will hold in $N$,  which proves the claim. } 
\end{proof}

\textcolor{black}{In other words, even though the generalized indiscernible sequences we use may really only focus on things indexed by tuples, 
in the cases where we apply the Ramsey property or where we assume that instances of inconsistency arise inside a $\GEM$-model, 
the Ramsey property is only ever applied to skeleta or other sequences indexed by singletons. 
}

\begin{ntn}  \label{ntn-exp} When $I_0$ is a set and $I_0 \subseteq J \in \mk$, writing  
$J[I_0]$ means $J$ expanded by constants for the elements of $I_0$, and likewise for $J[\bar{s}]$ when $\bar{s} \subseteq J$ is a sequence.
\end{ntn}

We now arrive at our central definition. 

\begin{defn}[Shearing, \cite{MiSh:1149} Definition 5.2] \label{e5}  
Suppose we are given a context $\xc$, a theory $T$, $M \models T$, $A \subseteq M$, and a formula $\vp(\bar{x}, \bar{c})$ of the language of $T$ with 
$\bar{c} \in {^{\omega>} M}$. 
We say that 
\[ \mbox{ \emph{the formula $\vp(\bar{x}, \bar{c})$ shears over $A$ in $M$ for $(I_0, I_1, \xc)$} } \]
when there exist a model $N$, a sequence $\mb$ in $N$, enumerations $\bar{s}_0$ of $I_0$ and $\bar{t}$ of $I_1$, and an $\aleph_0$-saturated $J \supseteq I$ such that:
\begin{enumerate}
\item $I_0 \subseteq I_1$ are finite subsets of $I$
\item $M \preceq N$
\item $\mb = \langle \bar{b}_{\bar{s}} : \bar{s} \in {^{\omega >}(J[I_0])} \rangle $ is $\mk$-indiscernible in $N$ over $A$
\item $\bar{c} = \bar{b}_{\bar{t}}$, and 
\item the set of formulas 
\[ \{ \vp(\bar{x}, \bar{b}_{\bar{t}^\prime}) : \bar{t}^\prime \in {^{\lgn(\bar{t})}(J)}, 
\tpqf(\bar{t}^\prime, \bar{s}_0, J) = \tpqf(\bar{t}, \bar{s}_0, I) \} \]
is contradictory.
\end{enumerate}
\end{defn}

Let us look at this definition a little more closely. From the second paragraph of the section, 
the reader may anticipate that it will be useful to be able to iterate any inconsistency we may find. 
Perhaps we start by observing that 
in some larger $J \supseteq I$, the tuples satisfying the same quantifier-free type as our given $\bar{t}$ can form indices for a generalized 
indiscernible sequence along which some $\vp$ is inconsistent. If we fix one of these instances, say $\bar{s}_0$, can this happen again over $\bar{s}_0$? 
That is, can we find some $\bar{t}_1$ from $I$ so that in some larger $J \supseteq I$, the tuples satisfying the quantifier-free type 
of $\bar{t}_1$ over $\bar{s_0}$ form indices for a generalized indiscernible sequence along which our $\vp$ is again inconsistent? 
Does this stop after finitely many steps?  This motivates the definition of \emph{$\xc$-superstability}, which was key to \cite{MiSh:1149}, 
and explains the appearance of a finite ``$I_0$'' in Definition \ref{e5}. 

\begin{rmk}
\tcb{Regarding Definition $\ref{e5}$, we may refer to this concept in different ways in the rest of the paper, such as 
``$\vp(\bar{x}; \bar{c})$ $(I_0, I_1)$-shears over $B$''; the formulation in $\ref{e5}$ inserts a verb between the 
$\vp$ and the $(I_0, I_1)$ to make parsing easier.}
\end{rmk}

We state here the local definition of ``$\xc$-superstable'' just for relational languages, which suffices for the present paper. Note that 
countability of the context $\xc$ is used in an essential way, as $I$ is written as the union of an increasing chain of finite sets.  

\begin{defn}[\cite{MiSh:1149} Definition 6.2, local version] \label{m5a}  Let $\xc$ be a countable context and $\Delta$ a set of formulas. 
We say $(T, \Delta)$ is \emph{unsuperstable} for $\xc$ when there are: 

\begin{enumerate}
\item[(a)] 
an increasing sequence of nonempty finite sets $\langle I_n : n <\omega \rangle$ with 
$I_m \subseteq I_{n} \subseteq I$ for $m < n <\omega$ and $\bigcup_n I_n = I$, 
which are given along with a choice of enumeration $\bar{s}_n$ for each $I_n$ 
where $\bar{s}_n \tlf \bar{s}_{n+1}$ for each $n$ 

\item[(b)]  
an increasing sequence of nonempty, possibly infinite, sets $B_n \subseteq B_{n+1} \subseteq \mathfrak{C}_T$ in the monster 
model for $T$, with $B := \bigcup_n B_n$

\item[(c)] and a partial type $p$ over $B$, 
\end{enumerate}

\noindent such that for each $n$, for some formula $\vp(\bar{x}, \bar{c})$ from $p \rstr B_{n+1}$ where 
$\vp(\bar{x}, \bar{y}) \in \Delta$,  we have that 
\[ \vp(\bar{x}, \bar{c}) \mbox{\hspace{4mm}} (I_n, I_{n+1})\mbox{-shears over }B_n. \]
\end{defn}

\begin{defn} Continuing Definition $\ref{m5a}$, for a countable context $\xc$, 
\begin{enumerate}
\item we may write $(T, \vp)$ to mean $(T, \{ \vp \})$.
\item We say $T$ is $\xc$-superstable if $(T, \Delta)$ is $\xc$-superstable where $\Delta$ is the set of all formulas in the language. 
\item We say $T$ is $\xc$-stable if $(T, \vp)$ is $\xc$-superstable for every $\vp$ in the language. 
\end{enumerate}
\end{defn}

\begin{disc}
\emph{
These investigations into the fine structure of forking highlight a longstanding terminological point, which hopefully should not 
cause confusion if explicitly pointed out.  Namely, in the classical case, 
both ``superstable'' and ``supersimple'' \tcb{are connected to} ``$\kappa(T) = \aleph_0$.''  In Definition \ref{m5a}, we would also have 
been justified in using ``supersimple''.  It seems to us that stable is the right one to use for various reasons, so hopefully this 
does not confuse the reader 
in a sentence like ``a theory is simple if there is \emph{some} context $\xc$ for which it is $\xc$-stable.'' 
}
\end{disc}

For context and easy quotation, 
the next local summary theorem includes some already known facts, and some new results proved below.  
We assume our theories $T$ are all complete.

\begin{theorem}[Local shearing] \label{t:local} For now all contexts are countable. 
\begin{enumerate}
\item Dividing implies shearing. 
\item Shearing does not imply dividing, i.e., it is strictly weaker than dividing (i.e. for some relevant context $\xc$). 
\item If $\vp$ is a stable formula in the theory $T$, then $(T, \vp)$ is $\xc$-superstable for \emph{every} countable context $\xc$. 
\item If $\vp$ is an unstable formula in the theory $T$, then $(T, \vp)$ is $\xc$-unsuperstable for some countable context $\xc$. 
$($So the previous item is a characterization of stable formulas.$)$ 
\item In the previous item, we can take $\xc$ to be a countable context from the class $\mk$ of linear orders, that is, $(T, \vp)$ is unstable if and 
only if it is $\xc$-unsuperstable for a countable context chosen from the class of infinite linear orders. 
\item If $(T, \vp)$ is $\xc$-superstable for \emph{some} countable context $\xc$, then $\vp$ is simple, 
i.e. it does not have the tree property in the theory $T$.  $($So the natural focus of shearing is in some sense within simplicity.$)$ 
\item 
More precisely, in the previous item, the following are equivalent: 
 (a) $\vp$ is simple in $T$ and\footnote{Recall that $\kappa_{\operatorname{loc}}(T, \vp) = \aleph_0$ means that 
 every $\vp$-type does not fork over a finite set.}
  $\kappa_{\operatorname{loc}}(T, \vp) = \aleph_0$, and (b) there is some 
countable context $\xc$ for which $(T, \vp)$ is $\xc$-superstable.
\end{enumerate}
\end{theorem}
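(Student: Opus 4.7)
The plan is to handle (1) directly from the definitions, to defer (2) to the explicit shearing analysis of the random graph carried out later in the paper (where shearing is exhibited in a theory whose simplicity precludes any such classical dividing), and to prove (3)--(7) by translating between shearing chains and classical dividing/forking phenomena for the formula $\vp$.

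For (1), suppose $\vp(\bar{x},\bar{c})$ divides over $A$, witnessed by an $A$-indiscernible sequence $\langle \bar{c}_i : i<\omega\rangle$ with $\bar{c}_0=\bar{c}$ and some $k$-inconsistency. Choose $\xc = (I,\mk)$ with $\mk$ the class of linear orders and $I = (\omega,<)$, and in a sufficiently saturated $N \succeq M$ extend the sequence to one indexed by an $\aleph_0$-saturated $J \supseteq I$. Applying the construction of Remark \ref{rmk:r-seq} with $\xr$ the quantifier-free type of a single element and sending any non-singleton tuple to $\emptyset$ yields a $\mk$-indiscernible $\mb$ in the sense of Definition \ref{d:k-ind}. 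Taking $I_0 = \emptyset$ and $I_1 = \{0\}$ with $\bar{t} = \langle 0 \rangle$, clause (5) of Definition \ref{e5} reduces to the contradictoriness of $\{\vp(\bar{x}, \bar{b}_{t'}) : t' \in J\}$, which is immediate from the extended $k$-inconsistency.

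For (3)--(5), I would use the classical behavior of stable versus unstable formulas along indiscernible sequences. For (3), if $\vp$ is stable, then $\vp$-definable subsets of any linear indiscernible sequence are Boolean combinations of initial/final segments and the local $D(\vp)$-rank is finite; via the $\GEM$-machinery from the Claim preceding Notation \ref{ntn-exp}, any infinite shearing chain would produce a $\vp$-dividing chain over finite bases of ever-increasing rank, contradicting finite local rank, so $\xc$-superstability holds in every countable $\xc$. For (4)--(5), if $\vp$ is unstable, fix witnesses to the order property $\langle \bar{a}_i,\bar{b}_i : i<\omega\rangle$ with $\models \vp(\bar{a}_i,\bar{b}_j) \iff i<j$, pass to $A$-indiscernibility of the $\bar{b}_i$'s, and take $\mk$ to be the class of linear orders with $I = (\omega,<)$. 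A shearing chain is then built inductively with $I_n = \{0,\dots,n-1\}$ and $B_n = \{\bar{b}_i : i<n\} \cup \{\bar{a}_n\}$, using at stage $n$ the qf-type over $\bar{s}_n$ of ``an element above everything in $I_n$'', along which $\vp(\bar{a}_n,\bar{x})$ cuts the saturated extension into inconsistent pieces. Since $\mk$ is already the class of linear orders, (5) holds automatically.

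For (6)--(7), the plan is to upgrade (4) to the tree property. If $\vp$ is not simple, it has $k$-$\mathrm{TP}$ for some $k$; in any countable context $\xc$, the Ramsey hypothesis on $\mk$ together with the standard modeling-via-$\GEM$ argument lets us transfer the branch-consistent / sibling-$k$-inconsistent tree into a $\mk$-indiscernible sequence indexed by tuples in a suitable $J \supseteq I$, producing an infinite shearing chain and thus $\xc$-unsuperstability for every $\xc$; this yields (6). For (7), the finiteness of $I_0$ in Definition \ref{e5} matches the finiteness of bases in $\kappa_{\operatorname{loc}}(T,\vp)=\aleph_0$: if $(T,\vp)$ is $\xc$-superstable in some $\xc$, then $\vp$-shearing chains over increasing finite bases terminate, and in simple theories this coincides with ordinary $\vp$-forking terminating over finite bases, giving $\kappa_{\operatorname{loc}}(T,\vp)=\aleph_0$. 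The main obstacle is the converse direction of (7) --- producing a concrete witnessing $\xc$ from simplicity together with $\kappa_{\operatorname{loc}}(T,\vp)=\aleph_0$. I expect this to require choosing $\mk$ rich enough to encode the combinatorics of $\vp$-dividing (likely a coloured or decorated linear-order class) while still being sparse enough that shearing chains must terminate, with the termination argument combining $D$-rank induction and the independence theorem for simple theories.
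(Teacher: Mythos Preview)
Your handling of (1) and (2) matches the paper. The genuine gaps are in (3) and (4)--(5).

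For (3), you write that ``any infinite shearing chain would produce a $\vp$-dividing chain over finite bases of ever-increasing rank.'' This is exactly the implication the paper shows is \emph{false} in general --- item (2) of the same theorem says shearing does not imply dividing --- and you give no reason why stability of $\vp$ should reverse this. The Claim you cite (the one preceding Notation~\ref{ntn-exp}) only transfers the shearing inconsistency into a $\GEM$-model; it does not manufacture ordinary dividing. The paper's argument is different and does not pass through dividing at all: it uses that types in a stable formula always have (weak) definitions over finite tuples in a $\GEM$-model, whereas $\xc$-unsuperstability allows one to build a $\vp$-type with no such weak definition (this is the content of Conclusion~\ref{c:unstable}, drawing on \cite{MiSh:1149} Claim~4.15 and Corollary~6.16).

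For (4)--(5), your construction with single-element indices and the order property does not give shearing. With $I_n=\{0,\dots,n-1\}$ and the qf-type of ``a point above $I_n$,'' the relevant set is $\{\vp(\bar{x},\bar{b}_{t'}) : t'>n-1\}$, and this is \emph{consistent}: any $\bar{a}_m$ with $m\le n-1$ realizes it, since $m<t'$ for all such $t'$. (The negated version is likewise consistent by compactness.) The order property along a singly-indexed sequence simply does not force the collision that shearing requires. The paper's route is to split cases: if $\vp$ is not simple, long dividing chains exist and (1) converts them to shearing chains; if $\vp$ is simple unstable, then $\vp$ has the \emph{independence} property, and one uses Claim~\ref{claim15a}(b), whose mechanism (property $\ocirc$) is intrinsically about \emph{pairs} --- two equivalence relations on $\xr(J)$ matched by a fixed-point-free $F$. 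Example~\ref{xyz-lin} then shows linear orders have $\ocirc$ (again via tuples of length two), which gives (5). If you want to salvage an order-property argument, you would need to index by pairs $t'<t$ and use $\vp(\bar{x},\bar{b}_t)\land\neg\vp(\bar{x},\bar{b}_{t'})$; that is essentially the $\ocirc$ construction in disguise, and not what you wrote.

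For (6)--(7) your sketch is in the right spirit but leans on \cite{MiSh:1149} just as the paper does; the converse in (7) is indeed the nontrivial direction, and the paper simply cites Theorem~9.6 there rather than reproving it.
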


\begin{proof}
(1) See \cite{MiSh:1149} Claim 5.8. 

(2) It was stated in \cite{MiSh:1149} (see the end of \S 5) that this would be proved in the present paper, which it is indeed: 
we shall show that there is nontrivial shearing for the random graph and also for the theories $T_{n,k}$ for 
$n > k \geq 2$, theories which have dividing only for equality.  

(3)-(4) See Conclusion \ref{c:unstable} below. 

(5) Conclusion \ref{c:unstable} and Example \ref{xyz-lin} below.  But this should be read with caution, see \ref{disc:17}. 

(6)-(7) See \cite{MiSh:1149} Lemma 9.5 and Theorem 9.6. 
\end{proof}

\begin{disc} \label{disc:17}
\emph{
Caution: just because dividing implies shearing, one should not jump to conclusions about linear orders. As noted in \ref{c:unstable}(5), 
the random graph is $\xc$-unsuperstable for contexts coming from $\mk$ the class of linear orders, 
see \ref{xyz-lin} below. (This uses an indexing by pairs.)  However the random graph is $\xc$-superstable for some contexts coming from 
expansions of linear orders by predicates, see \ref{xyz-nonlin}. 
}
\end{disc}

\begin{disc}
\emph{
In \ref{c:unstable}(7), we might want a characterization of ``$(T, \vp)$ is simple'' and wonder why the apparently 
extra assumption  ``$\kappa_{\operatorname{loc}}(T, \vp) = \aleph_0$''   appears. The reason is that 
in \ref{c:unstable}, we are using countable contexts while making no 
assumptions on cardinality of the theory, so there is an extra point about the length of the shearing chain. 
\tcb{That is, we may wish to define $\kappa_\xc(T)$ for a context $\xc$ and a complete theory $T$ to be the minimal $\kappa$ such that there are no $A_\alpha \subseteq \mathfrak{C}_T$ for $\alpha < \kappa$ such that $\alpha < \beta < \kappa$ implies $A_\alpha \subseteq A_\beta$, 
and a type $p \in \ts(A_\kappa, \mathfrak{C}_T)$ such that $p \rstr A_{\alpha +1}$ $\xc$-shears over $A_\alpha$, but then we need an 
analogous condition on the $I$'s as in \ref{m5a}(a); and if the language is larger, we may get long chains not coming from a single formula.}  
So to fully capture simplicity, it would be natural to consider larger contexts to deal with the analogue of arbitrarily large $\kappa(T)$. 
If the reader is inspired by this remark, \cite{MiSh:1149} \S 10 is a beginning. 
}
\end{disc}

\vspace{5mm}

\section{Background}
\setcounter{theoremcounter}{0}

In later sections we will use shearing to distinguish between various theories which have no dividing in the usual sense (other than 
that coming from equality). In this expository section we review why this is so.  For background on simple theories, see the 
survey \cite{GIL}.   

\begin{defn}
For each $n \geq 2$, let $T_{n,1}$ be the theory of the generic $K_{n+1}$-free graph.  For $n > k \geq 2$, 
let $T_{n,k}$ be the theory of the generic $(n+1)$-free $(k+1)$-hypergraph.
\end{defn}

So, in this notation, $T_{3,2}$ is the theory of the generic tetrahedron-free three-hypergraph. For $k \geq 2$, i.e. for the case where the edge is 
really a hyperedge, these theories are simple unstable with trivial forking, as 
shown by Hrushovski \cite{hrushovski1}.  For context, we begin by reviewing why the graph versions of these theories 
(the generic triangle-free graph and its relatives) \emph{do} have a lot of dividing. 

\begin{fact} \label{fact15}
$T_{n,1}$ is not simple.  
\end{fact}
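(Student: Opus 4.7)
The plan is to show $T_{n,1}$ is not simple by exhibiting a tree configuration inside the generic $K_{n+1}$-free graph witnessing failure of the local character of dividing; this is a classical fact going back (in essence) to Hrushovski.

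First, I would build a tree of vertices $\langle b_\eta : \eta \in n^{<\omega}\rangle$ in a model of $T_{n,1}$ by declaring the edge relation $R(b_\eta, b_{\eta'})$ to hold exactly when $\eta$ and $\eta'$ are distinct siblings in the $n$-ary tree (sharing an immediate predecessor). The induced subgraph is $K_{n+1}$-free because any clique must consist of pairwise siblings and at most $n$ children share a common parent; hence by genericity the whole configuration embeds into the monster model.

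Two verifications then yield the tree-property-style witness:
\begin{enumerate}
\item[(i)] The $n$ siblings $b_{\eta^\smallfrown 0}, \dots, b_{\eta^\smallfrown (n-1)}$ at any node form a $K_n$, so any common neighbor would complete $K_{n+1}$, which is forbidden; hence $\{R(x, b_{\eta^\smallfrown i}) : i < n\}$ is $n$-inconsistent.
\item[(ii)] Along each branch $\eta \in n^\omega$ the vertices $\{b_{\eta \rstr k} : k<\omega\}$ are pairwise comparable in the tree order and so pairwise non-adjacent; adjoining a common neighbor $x$ yields only a star on an independent set, which is $K_{n+1}$-free. By genericity such an $x$ exists, so $\{R(x, b_{\eta \rstr k}) : k<\omega\}$ is consistent.
\end{enumerate}
Combined, these produce chains of forking of unbounded length along each branch, so the local character of dividing fails and $T_{n,1}$ cannot be simple.

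The main subtlety --- the reason this proof is usually only sketched --- is that the naive $\omega$-branching form of the tree property cannot be realized for $R(x,y)$ directly in $T_{n,1}$: $K_{n+1}$-freeness caps cliques at size $n$, so one cannot have $\omega$ pairwise adjacent siblings, and an attempt to extract an indiscernible sub-sequence from our $n$ siblings is blocked by Ramsey. The argument therefore works with the finite-branching ($n$-ary) witness above, appealing to the equivalent characterization of non-simplicity via the unbounded local character of forking (or, alternatively, via explicit failure of the Kim--Pillay independence theorem over models, which in $T_{n,1}$ is obstructed in exactly the same way: amalgamations of common-neighbor types are forced into forbidden $K_{n+1}$-configurations).
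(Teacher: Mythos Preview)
Your argument has a genuine gap: the formula $R(x,y)$ simply does not divide in $T_{n,1}$, so your $n$-ary tree does \emph{not} produce a forking chain along any branch. The reason is exactly the Ramsey obstruction you flag at the end but do not resolve. For $R(x,b)$ to divide over some base $A$, you need an infinite $A$-indiscernible sequence $\langle b_i : i<\omega\rangle$ beginning with $b$ such that $\{R(x,b_i):i<\omega\}$ is $k$-inconsistent. But any infinite indiscernible sequence of single vertices in a $K_{n+1}$-free graph must be an anticlique (otherwise indiscernibility would force all pairs adjacent, giving arbitrarily large cliques). A common neighbour of an anticlique creates only a star, which is $K_{n+1}$-free, so $\{R(x,b_i)\}$ is consistent. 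Hence your step from ``$n$ siblings are jointly inconsistent'' to ``$R(x,b_{\eta\rstr k})$ divides over the earlier branch'' fails, and with it the appeal to unbounded local character. The parenthetical about the Kim--Pillay independence theorem is a legitimate alternative route, but you have not carried it out; as written, the proof does not establish non-simplicity.

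The paper's proof avoids this by changing the formula. It takes $\vp(x;y_0,\dots,y_{n-1})=\bigwedge_{\ell<n} R(x,y_\ell)$ and builds an \emph{infinite} indiscernible sequence of $n$-tuples $\bar{a}_i=\langle a^0_i,\dots,a^{n-1}_i\rangle$ with $R(a^s_i,a^t_j)$ iff $s\neq t$ and $i\neq j$ (a multipartite complement-of-matching pattern, which is $K_{n+1}$-free). Each $\vp(x,\bar{a}_i)$ is consistent, but any two instances together force an edge across columns and hence a forbidden clique, so the sequence is $2$-inconsistent. Because the parameters are tuples rather than singletons, there is no obstruction to having infinitely many of them indiscernibly, and one can iterate to get the tree property for $\vp$. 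The moral: in $T_{n,1}$ the dividing has to be packaged into a formula with several parameters; the bare edge relation is too weak.
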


\begin{proof}[Proof sketch]
First consider the triangle-free random graph, in our notation $T_{2,1}$. Let $\vp(x,y,z) = R(x,y) \land R(x,z)$. Consider in 
any model of $T_{2,1}$ an infinite sequence of pairs 
$\ma = \langle \bar{a}_i : i < \omega \rangle$ where each $\bar{a}_i = a^0_i ~a^1_i$, and such that $R(a^0_i, a^1_j)$ for all $i\neq j$ but for 
all $i,j$ 
$\neg R(a^0_i, a^0_j)$ and $\neg R(a^1_i, a^1_j)$. Informally, the sequence $\ma$ is a bipartite graph which is the complement of a matching. 
Then the sequence 
\[ \{ \vp(x, a^0_i, a^1_i) : i < \omega \} \]
has the property that each formula is individually consistent, but the sequence is 2-inconsistent, since if $i \neq j$ then 
any element satisfying $\{ R(x,a^0_i),  R(x,a^1_j) \}$ would form a triangle.   Moreover, it is easy to see that for any $\bar{a}_i$ in $\ma$, 
we can construct a sequence \tcb{conjugate (i.e., isomorphic)} to $\ma$ which is indiscernible over $\bar{a}_i$, and continuing in this way we may 
construct the tree property for $\vp$, 
showing that $\vp$ is not simple. 

It is easy to extend this idea to $n >2$ using $\vp = R(x,y_0) \land \cdots \land R(x,y_{n-1})$, replacing $\ma$ by a sequence of $n$-tuples 
$\bar{a}_i = a^0_i \cdots a^{n-1}_i$ and specifying that $R(a^{s}_i, a^{t}_j)$ holds there if and only if $(s \neq t) \land (i \neq j)$. 
\end{proof}

When we consider hypergraphs instead of graphs the situation is quite different. 

\begin{fact}[Hrushovski c. 2002, see \cite{hrushovski1}]  \label{t:trd}
For $n>k\geq 2$, $T_{n,k}$ is simple unstable with only trivial dividing.
\end{fact}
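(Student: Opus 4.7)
My plan is to produce a canonical independence relation on $T_{n,k}$ via disjoint (``free'') amalgamation of hypergraphs, verify the Kim--Pillay axioms to conclude simplicity, and extract triviality of dividing as a corollary. Unstability is handled separately and quickly.

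For unstability, since $k \geq 2$ and $n > k$, the formula $\vp(x; y_1, \dots, y_k) = R(x, y_1, \dots, y_k)$ witnesses the order property via a ``staircase'': choose singletons $a_i$ and pairwise disjoint $k$-tuples $\bar{b}_j$ with $R(a_i, \bar{b}_j)$ iff $i \leq j$, and no other hyperedges. Any $(n+1)$-clique $K$ must contain no $a$-vertex---impossible, since no $R$-hyperedge lives entirely among $b$-vertices---or exactly one $a_i$; in the latter case, two $k$-subsets of $K \setminus \{a_i\}$ sharing $k-1$ elements would have to coincide with two distinct $\bar{b}_j, \bar{b}_{j'}$, contradicting disjointness of the $\bar{b}$'s. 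Hence the staircase is $(n+1)$-clique-free and embeds in a model of $T_{n,k}$.

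For simplicity, first note that $\acl(A) = A$ for any set $A$, since the theory has no function symbols and any two points outside $A$ are interchangeable by an automorphism fixing $A$. Call $\bar{a}$ and $B$ \emph{disjointly independent over $C$} when $\bar{a} \cap B \subseteq C$. The basic axioms---invariance, symmetry, monotonicity, transitivity, finite character, extension, and local character with $\kappa = \aleph_0$---are all immediate. The heart of the matter is the independence theorem: given a model $M$, disjointly independent $\bar{b}_1, \bar{b}_2$ over $M$, and types $p_i(x) \in S(M\bar{b}_i)$ agreeing on $M$, I would produce a common extension by choosing a new element $c$ realizing $p_i$ on each $M\bar{b}_i$ and declaring that $c$ has \emph{no} further $R$-hyperedge whose other $k$ vertices are not entirely within $M\bar{b}_1$ or entirely within $M\bar{b}_2$. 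The crucial check is that this creates no forbidden $(n+1)$-clique $K = \{c, v_1, \dots, v_n\}$. If some $v_i \in \bar{b}_1 \setminus M$ and some $v_j \in \bar{b}_2 \setminus M$, then, using $n \geq k+1$, we may enlarge $\{v_i, v_j\}$ by $k-2$ further vertices of $K$ to form a $k$-set lying in neither $M\bar{b}_1$ nor $M\bar{b}_2$; by construction the resulting $(k+1)$-set containing $c$ is not a hyperedge, contradicting that $K$ is a clique. Otherwise $K \setminus \{c\}$ lies entirely on one side, and $K$ was already a forbidden clique in the ambient model, again a contradiction.

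Kim--Pillay now delivers simplicity, with disjoint independence coinciding with non-forking. Triviality of dividing follows: in simple theories dividing and forking coincide on complete types, and by our identification any complete $q \in S(C\bar{b})$ non-algebraic over $C$ is non-forking over $C$, so a formula $\vp(x, \bar{b})$ can divide over $C$ only if it already implies $x \in \bar{b} \setminus C$. The main obstacle in the whole argument is the no-new-clique verification in the independence theorem, and the hypothesis $k \geq 2$ enters essentially there: the slack of $k - 2 \geq 0$ extra vertices is precisely what furnishes a crossing witness, and at $k = 1$ this slack vanishes and the argument correctly fails, consistent with Fact \ref{fact15}.
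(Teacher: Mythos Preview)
Your argument is correct and follows a well-known route via the Kim--Pillay characterization: exhibit disjoint (free) amalgamation as a candidate independence relation, verify the axioms with the independence theorem as the only substantive step, and read off both simplicity and the identification of non-forking with disjointness. The no-new-clique check is right, and your observation that the slack of $k-2 \geq 0$ vertices is exactly what lets you produce a ``crossing'' $k$-set is a nice way to see where $k \geq 2$ enters. (Two small glosses: you silently assume a clique cannot contain two $a$-vertices in the unstability argument, which is immediate since no hyperedge contains two of them; and your independence-theorem amalgam is written for a single variable $x$, though the extension to tuples is routine.)

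The paper's proof is genuinely different in spirit. Rather than building an abstract independence relation, it argues directly that no nontrivial dividing can occur along an indiscernible sequence $\langle \bar{a}_i : i < \omega \rangle$: by quantifier elimination, the set $\{\vp(x,\bar{a}_i)\}$ can only entail instances $R(x,b,c)$ whose parameters $b,c$ lie in a \emph{single} column $\bar{a}_i$, whereas any forbidden tetrahedron forcing inconsistency would require an edge $R(b,c,d)$ on parameters spread across distinct columns --- and hence entailed instances $R(x,b,c)$, $R(x,b,d)$, $R(x,c,d)$ with cross-column parameters, which the set simply cannot produce. This is where the hypothesis $k \geq 2$ appears in the paper's version: an edge instance has at least two parameter slots, and both must come from one column. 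Your approach is more structural and yields the exact forking relation for free; the paper's is more elementary, avoids invoking Kim--Pillay, and foreshadows the ``columns versus cross-column edges'' picture that recurs later in the shearing analysis of \S 5.
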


\begin{proof}[Proof sketch]
We sketch the proof for $T = T_{3,2}$, the tetrahedron-free 3-hypergraph, since this extends naturally to larger arities but 
at a notational cost. 
Suppose there were some formula $\vp(x,a^0,\dots, a^{\ell-1})$, some $m > 1$, and some indiscernible sequence 
$\ma  = \langle \bar{a}_i : i <\omega \rangle$ such that $\bar{a}_0 = \langle a^0_0, \dots, a^{\ell-1}_0 \rangle = \langle a^0, \dots, a^{\ell-1} \rangle$ and 
\begin{equation}
\label{star} \{ \vp(x,a^0_i,\dots, a^{\ell-1}_i)  : i < \omega \} 
\end{equation}
is $m$-inconsistent. \tcb{Consider first the case where $\ell(\bar{x})=1$.}
Let's consider $\ma$ as being arranged so that each $\bar{a}_i$ is a column, and each $\langle a^{s}_i : i < \omega \rangle$ 
is a a row.   We don't assume anything about how the edges hold on $\ma$, but there are some constraints, e.g. 
because $\ma$ is indiscernible and exists in a model of $T$, 
the edge $R(x,y,z)$ cannot hold on any three distinct elements in any row of $\ma$, otherwise (by indiscernibility) the row would 
contain a tetrahedron. 
By quantifier elimination, and without loss of generality ignoring the trivial forking coming from equality,  
we may assume $\vp$ is a boolean combination of instances of $R(x;y,z)$.   

Now if  (\ref{star}) 
is inconsistent, there must be some tetrahedron which appears. In particular, there must be elements $b,c,d$ 
which occur in $\ma$ with the following three properties: first, the quantifier-free type of $\ma$ implies $R(b,c,d)$;  
second, $ \{ \vp(x,a^0_i,\dots, a^{\ell-1}_i)  : i < \omega \}  \vdash \{ R(x,b,c), R(x,b,d), R(x,c,d) \}$; and third, 
because each individual formula in $(\ref{star})$ is consistent,  
$b,c,d$ are not all in one column of $\ma$. 
But $(\ref{star})$ can only imply instances of formulas all of whose parameters occur in the 
same column -- look at the definition of (\ref{star}) and notice that none of its formulas include parameters from 
distinct columns.  [This is the crucial difference in having an edge of higher arity than 2.]  So this contradiction can never arise. 

\tcb{Observe that if $\ell(\bar{x}) > 1$, any tetrahedron arising must necessarily involve an edge on one of the $x_i$'s and parameters
$b$, $c$ from two distinct columns (as each individual instance of $\vp$ must remain consistent) 
and so a similar analysis applies. Finally, observe that if $\ma$ is indiscernible over some set 
$A$ rather than just over the empty set, the only new case is when $\ell(x) = 1$ and one of 
$b,c,d$ in the above argument may come from $A$, so we again 
reduce to the problem of asserting an edge across two distinct columns.}

This completes the (sketch of the) proof. 
\end{proof}

\br

\section{Analysis of the random graph} 
\setcounter{theoremcounter}{0}

\begin{conv} 
Reminder: all contexts are countable.
\end{conv}

The theory of the random graph, $\trg$, is central to the picture. In this section we analyze it carefully and give a complete characterization 
of countable contexts $\xc$ for which $\trg$ is $\xc$-superstable, Theorem \ref{t:circle}.   This result was announced in \cite{MiSh:1149}, along with Definition 3.12 and the motivating example preceeding it; although it isn't strictly necessary, we repeat the example here for clarity. 

The direct route to Theorem \ref{t:circle} is via Definition \ref{d:circ}, Claim \ref{claim15a} and Claim \ref{c:west}. 
There are two discussions and a claim which are mainly explanatory: Discussion \ref{sub:ex}, Discussion 
\ref{disc:x2} and Claim \ref{claim15a}.
 
 \subsection{A motivating example} \label{sub:ex}
Fix for awhile $\xc = (I, \mk) = (I_\xc, \mk_\xc)$ some countable context and 
we shall investigate how $\xc$-shearing may arise for $\trg$ inside a $\GEM$-model and try to find a characteristic property 
of a countable context which explains how such shearing occurs.
Consider $M = \GEM(I, \Phi)$, where $\Phi \in \Upsilon[\trg]$ thus $(M, R^M) \models \trg$, 
and let $p \in \ts(M \rstr \tau(\trg))$ be a nonalgebraic type. Fix $J$ such that $I \subseteq J \in \mk$ and $J$ is $\aleph_0$-saturated. 
Let $N  = \GEM(J, \Phi)$. Since all the contexts we will consider are well behaved (see \cite{MiSh:1149} Convention 2.7, or take the following as 
a provisional definition of good behavior)  we have 
$M \preceq N$, so we will identify the sequence 
$\langle \bar{a}_t : t \in I \rangle$ which generates $M$ with a subsequence of $\langle \bar{a}_t : t \in J \rangle$. 

By quantifier elimination, we may suppose $p$ is equivalent to  
$\{ R(x,b_\alpha)^{\ii_\alpha} \land x \neq b_\alpha ~:~ \alpha < \kappa \}$ for some infinite $\kappa$, where each 
$\ii_\alpha \in \{ 0, 1 \}$.  As $M$ is generated by $\{ \bar{a}_t : t \in I \}$, each $b_\alpha$ 
may be written as 
$\sigma^M_\alpha(\bar{a}_{\bar{t}_\alpha})$ for some $\tau(\Phi)$-term $\sigma_\alpha$ and some \tcb{finite sequence $\bar{t}_\alpha$ 
from $I$}.\footnote{\tcb{Since we will soon take all possible representations, we don't require $\bar{t}_\alpha$ to be strictly 
increasing here; when we fix a quantifier-free type of $\bar{t}_\alpha$ later, the order type will become determined.} }  
This representation may not be unique;  
there is no harm in choosing our enumeration to include all representations.
So without loss of generality, for some $\kappa = \kappa + |\tau(\Phi)|$, 
\begin{equation} \label{eq:p}
p(x) \equiv \{ R(x, \sigma^M_\alpha(\bar{a}_{\bar{t}_\alpha}))^{\ii_\alpha} \land 
x \neq \sigma^M_\alpha(\bar{a}_{\bar{t}_\alpha}) : \alpha < \kappa \} 
\end{equation}  where for each $\alpha < \kappa$ and $\bar{t}$ \tcb{$\in I^{<\omega}$}
 if $\sigma^M(\bar{a}_{\bar{t}}) = \sigma^M_\alpha(\bar{a}_{\bar{t}_\alpha})$, 
then 
\begin{equation} \label{eq:r}
\mbox{ for some $\beta < \kappa$, $\sigma_\beta = \sigma$ and $\bar{t}_\beta = \bar{t}$. }
\end{equation}
This may increase the length of the enumeration, but will not change the size of the type in $\tau(\trg)$.  
[From the point of view of $M \rstr \tau(\trg)$,
we may appear to list some (say)  
$R(x,b_\alpha)^{\ii_\alpha} \land x \neq b_\alpha$ many times, because we have listed an instance for each way of 
writing $b_\alpha$ in $M$ in terms of the skeleton. From the point of view of 
our enumeration, which has access to $\tau(\Phi)$, for each $b_\alpha$ there are potentially $|I| + |\tau(\Phi)|$ such 
representations.]

Recalling our fixed $\aleph_0$-saturated $J$ extending $I$ and its associated 
$N = \GEM(J, \Phi)$, we ask about potential $\xc$-shearing. 
Working in $N$, consider the set of formulas 
\begin{align*} \label{eq:q}
q(x) = q_{I_0}(x) =  & \{ R(x, \sigma^N_\alpha(\bar{a}_{\bar{u}}))^{\ii_\alpha} 
\land x \neq \sigma^M_\alpha(\bar{a}_{\bar{u}}) : 
 \alpha < \kappa, \\ & \mbox{ \hspace{5mm} } \tpqf(\bar{u}, I_0, J) = 
\tpqf(\bar{t}_\alpha, I_0, I) \}. 
\end{align*} 
To show $q(x)$ is consistent, it would suffice to check that whenever 
\begin{equation} 
R(x, \sigma^N_\alpha(\bar{a}_{\bar{v}}))^{\ii_\alpha}  \in q \mbox{ and } R(x, \sigma^N_\beta(\bar{a}_{\bar{w}}))^{\ii_\beta}  \in q 
\end{equation}
we have that {if} $\sigma^N_\alpha(\bar{a}_{\bar{v}}) = \sigma^N_\beta(\bar{a}_{\bar{w}})$ {then} $\ii_\alpha = \ii_\beta$. 
Suppose this fails, i.e., for some suitable $\alpha, \beta, \bar{v}_*, \bar{w}_*$ which we fix for awhile, 
$q$ contains the contradictory formulas 
\begin{equation}
\label{eq:as1} R(x,\sigma^N_\alpha(\bar{a}_{\bar{v}_*}))  \mbox{ and } \neg R(x,\sigma^N_\beta(\bar{a}_{\bar{w}_*})). 
\end{equation}
In other words, 
\begin{equation} 
\label{eq:as}
\sigma^N_\alpha(\bar{a}_{\bar{v}_*}) = \sigma^N_\beta(\bar{a}_{\bar{w}_*}) = {b} 
\mbox{ but } \ii_\alpha \neq \ii_\beta
\mbox{ (here w.l.o.g. $\ii_\alpha = 1$, $\ii_\beta = 0$). }
\end{equation} 
Informally, what has happened is that in $N$, there is a ``positive line'' 
\[ \posl = \{ \sigma^N_\alpha(\bar{a}_{\bar{v}}) :  \tpqf(\bar{v}, I_0, J) = \tpqf(\bar{t}_\alpha, I_0, I) \} \subseteq \dom(N) \]
and a ``negative line'' 
\[ \negl = \{ \sigma^N_\beta(\bar{a}_{\bar{w}}) :  \tpqf(\bar{w}, I_0, J) = \tpqf(\bar{t}_\beta, I_0, I) \} \subseteq \dom(N) \]
and the problem is that 
\begin{equation} \label{eq:pl}
\posl \cap \negl \neq \emptyset, \mbox{ witnessed by } b = \sigma^N_\alpha(\bar{a}_{\bar{v}_*}) = \sigma^N_\beta(\bar{a}_{\bar{w}_*}). 
\end{equation}
However, it is also important to notice that both ``lines'' have ``points from $I$'', 
and that these are not the point(s) of intersection:\footnote{If $p$ is a complete type in $M$, 
then we will have the stronger statement that ``the restrictions of $\posl$ and $\negl$ to $I$ have no intersection,'' i.e. 
$\{ \sigma^N_\alpha(\bar{a}_{\bar{v}}) :  \tpqf(\bar{v}, I_0, I) = \tpqf(\bar{t}_\alpha, I_0, I) \} \cap  
\{ \sigma^N_\beta(\bar{a}_{\bar{w}}) :  \tpqf(\bar{w}, I_0, I) = \tpqf(\bar{t}_\beta, I_0, I) \} = \emptyset$, but we do not need this here.}
\begin{equation} \label{eq:7} \sigma^N_\alpha(\bar{a}_{\bar{t}_\alpha}) \neq \sigma^N_\beta(\bar{a}_{\bar{t}_\beta}) 
\end{equation} 
else our original $p$ would be inconsistent. (So both $\posl$ and $\negl$ have size $\geq 2$.) 

Definition \ref{d:circ} abstracts the key property of $\xc$ behind this picture. Towards this, 
observe that writing $\xr_\alpha = \tpqf(\bar{t}_\alpha, I_0, I)$, we have that ``$\sigma^N_\alpha(\bar{a}_{\bar{v}}) = \sigma^N_\alpha(\bar{a}_{\bar{u}})$'' is an equivalence relation on ${\xr_\alpha}(J)$ (asserting that $\bar{v}, \bar{u}$ are equivalent), 
and similarly for $\sigma^N_\beta$ and $\xr_\beta$. 
Note that the fact that $\bar{t}_\alpha$ and $\bar{t}_\beta$ may have different types is not important (as will be explained).
The third formula, $F$, will give the analogue of equation (\ref{eq:as}) ``points of intersection.''
After giving the definition, we will work towards proving the characterization in 
Theorem \ref{t:circle}.\footnote{In Definition \ref{d:circ}, note that $\ocirc$, ``the circle property'' which abstracts the above analysis 
will be the indicator of complexity ($\trg$ is $\xc$-unsuperstable), whereas 
its negation $\neg \ocirc$ will be the indicator of non-complexity ($\trg$ is $\xc$-superstable).} 

\begin{rmk}
\tcb{By an infinitary quantifier-free formula we will mean the disjunction, indexed by all quantifier-free types of tuples that 
satisfy the relation, of the conjunction of the formulas in the indexing type.} 
\end{rmk}

\begin{defn} \label{d:circ}
The context $\xc = (I, \mk)$ has property $\ocirc$ when:

\begin{quotation}
\noindent  For every finite $I_0 \subseteq I$ there is a finite $I_1$ with 
$I_0 \subseteq I_1 \subseteq I$, letting $\bar{s}$, $\bar{t}$ list  
$I_0$, $I_1$ respectively, such that for any $\aleph_0$-saturated $J \supseteq I$ there exist quantifier-free 
$($possibly infinitary$)$ formulas of $\tau(\mk)$ called $F(\bar{x}_1, \bar{x}_2; \bar{y})$,  
$E_1(\bar{x}_1, \bar{x}_2; \bar{y})$, 
$E_2(\bar{x}_1, \bar{x}_2; \bar{y})$, such that
 $\ell(\bar{x}_1) = \ell(\bar{x}_2) = \lgn(\bar{t})$, $\lgn(\bar{y}) = \lgn(\bar{s})$, and:  
\begin{itemize}
\item[(i)] for $\ii = 1, 2$ $E_\ii(\bar{x}_1, \bar{x}_2; \bar{s})$ defines an equivalence relation on 
$Y_{\bar{s}} = \{ \bar{t}^\prime \in {^{\lgn(\bar{t})}J} : 
\tpqf(\bar{t}^\prime, \bar{s}, J) = \tpqf(\bar{t}, \bar{s}, J) \}$.
\item[(ii)] $F(\bar{x}_1, \bar{x}_2; \bar{s})$  
defines a nonempty one-to-one partial function\footnote{\tcb{i.e., $F$ matches up certain $E_1$-classes (or their 
formal representatives) with certain $E_2$ classes.}}
 from $Y_{\bar{s}}/E_1(-,-; \bar{s})$ to $Y_{\bar{s}}/E_2(-,-; \bar{s})$, and 
\item[(iii)] $F$ has no fixed points, in other words for no $\bar{t} \in Y_{\bar{s}}$ is it the case that $F(\bar{t}, \bar{t}; \bar{s})$. 
\end{itemize}
\end{quotation}
\end{defn}

We now work towards a characterization, Theorem \ref{t:circle}.  
Note that the intent is $\ocirc_\xc$ means $\xc$ is too expressive; it expresses that the theory of the random graph is 
unsuperstable in some sense, whereas  $\neg \ocirc_\xc$ means $\xc$ is reasonable. 
For a high-level view of this property, see \S \ref{s:eq}.

\subsection{Discussion} \label{disc:x2} As a warm-up to Claim \ref{claim15a}, let us verify that indeed this property has captured 
$\xc$-shearing, by reversing the abstraction above.  Suppose that $\xc$ has property $\ocirc$. 
Fix a finite subset $I_0$ of $I_\xc$ and $\Psi \in \Upsilon[\trg]$. 
Let $I_1, \bar{s}, \bar{t}, E_1, E_2, F$ witness property $\ocirc$ for $I_0$. Fix an $\aleph_0$-saturated $J \in \mk_\xc$ with $I \subseteq J$, and 
we will find a formula of $\tau(\trg)$ which $(I_0, I_1)$-shears for $\xc$ as follows. 
Let $N = \GEM(J, \Psi)$ have skeleton $\langle \bar{a}_t : t \in J \rangle$.  Without loss of generality, 
$||N|| \geq |J|$. 
Let $\sigma_1, \sigma_2$ be two new 
$(\ell(\bar{a}_t)+\ell(\bar{a}_{\bar{s}}))$-place function symbols not already in $\tau(\Psi)$. 
Let $\xr = \tpqf(\bar{t}, I_0, I)$.  Let $Y_{\bar{s}} = \xr(J)$ be the set of realizations of $\xr$ in $J$. 

Expand $N$ to $N^+$ by interpreting $\sigma_1, \sigma_2$ as follows. 

\begin{itemize}

\item \emph{Formal description:}  We require the expansion to satisfy: 
for any $\bar{t}^\prime, \bar{t}^{\prime\prime} \in Y_{\bar{s}}$, 
\begin{enumerate}
\item[(a)] for $\ii = 1, 2$, 
\[  N^+ \models \sigma_{\ii}(\bar{a}_{\bar{t}^\prime}, \bar{a}_{\bar{s}}) = \sigma_{\ii}(\bar{a}_{\bar{t}^{\prime\prime}}, \bar{a}_{\bar{s}}). \]
if and only if $J \models E_{\ii}(\bar{t}^\prime, \bar{t}^{\prime\prime}; \bar{s})$. 

\item[(b)] $N^+ \models \sigma_{1}(\bar{a}_{\bar{t}^\prime}, \bar{a}_{\bar{s}}) = \sigma_{2}(\bar{a}_{\bar{t}^{\prime\prime}}, \bar{a}_{\bar{s}})$ 
if and only if $J \models F(\bar{t}^\prime, \bar{t}^{\prime\prime}; \bar{s})$. 
\end{enumerate}
Once we have done this, since $J$ is $\aleph_0$-homogeneous, we can also ensure that for 
any $\bar{u}$ realizing the same quantifier-free type as $\bar{s}$ in $J$ that the analogues of 
$(a), (b)$ hold with $\bar{u}$ in place of $\bar{s}$, remembering to then replace $\bar{t}$ by some $\bar{v}$ 
such that $\tpqf(\bar{v}^\smallfrown \bar{u}, \emptyset, J) = \tpqf(\bar{t} ~^\smallfrown \bar{s}, \emptyset, J)$. 

\br
\item \emph{Informal description}: Take any set of distinct elements of 
$\dom(N)$ of size $|J|$, thus $\geq |Y_{\bar{s}}| $, and interpret the functions to take values in this set according to the following 
informal heuristic. Given $\bar{t}^\prime \in Y_{\bar{s}}$, let us say ``the image of $\bar{t}^\prime$'' to mean 
$\bar{a}_{\bar{t}^\prime} ~^\smallfrown \bar{a}_{\bar{s}}$. Then: 
images of elements of $Y_{\bar{s}}$ are sent to the same $b \in \dom(N^+)$ by $\sigma_1$ if and only if they are in the same $E_1$-class; they are sent to the same $b \in \dom(N^+)$ by  $\sigma_2$ if and only if they are in the same $E_2$-class; and the values of $\sigma_1$ and 
$\sigma_2$ should coincide if and only if the $E_1$- and $E_2$-classes of the respective elements were matched by $F$. 
Ensure that the parallel conditions hold replacing $\bar{s}$ by any other $\bar{u}$ from $J$ with the same quantifier-free type. 

\end{itemize}
Let $\Phi^\prime \geq \Phi$ be given by applying the Ramsey property to $N^+$, so in $\Phi^\prime$ (and any template extending it) 
(a) and (b) will remain true, as will their analogues for $\bar{u} \equiv_{\operatorname{qf}} \bar{s}$. 

Now in the model $N^\prime = \GEM(I, \Phi^\prime)$, consider the formula
\[ R(x,\sigma_1(\bar{a}_{\bar{t}})) \land \neg R(x,\sigma_2(\bar{a}_{\bar{t}})). \]
Property (b) and the assumption \ref{d:circ}(iii) that $F$ has no fixed points ensure that 
in $N^\prime \models \sigma_1(\bar{a}_{\bar{t}} )\neq \sigma_2(\bar{a}_{\bar{t}})$, so this is a consistent formula. However, 
$F$ is a partial function and is nonempty, where non-emptiness is witnessed say by $N^\prime \models F(\bar{w}^\prime, \bar{w}^{\prime\prime}; \bar{s})$. 
Since $J$ is $\aleph_0$-homogeneous and $\bar{w}^\prime \in Y_{\bar{s}}$, 
for any other $\bar{t}^\prime \in Y_{\bar{s}}$ there is $\bar{t}^{\prime\prime} \in Y_{\bar{s}}$ such that 
\[ \tpqf(\bar{t}^\prime ~ ^\smallfrown \bar{t}^{\prime\prime}, \bar{s}, J) = \tpqf(\bar{w}^\prime ~ ^\smallfrown \bar{w}^{\prime\prime}, \bar{s}, J). \]
Since $F$ is an invariant of the quantifier-free type, this means $J \models F(\bar{t}^\prime, \bar{t}^{\prime\prime}; \bar{s})$. 
In short, the homogeneity of $J$ tells us that if $F$ is a partial one to one function it must be a bijection. 
It follows that
\[  \{ R(x,\sigma_1(\bar{a}_{\bar{t}}))  \land \neg R(x,\sigma_2(\bar{a}_{\bar{t}})) : \bar{t} \in Y_{\bar{s}} \} \]
is inconsistent in the following strong sense: for \emph{every} $\bar{t}_* \in Y_{\bar{s}}$, there is some $\bar{t}_{**} \in Y_{\bar{s}}$ 
such that 
\[ \{ R(x,\sigma_1(\bar{a}_{\bar{t}_*})) \land \neg R(x,\sigma_2(\bar{a}_{\bar{t}_{*}})), 
R(x,\sigma_1(\bar{a}_{\bar{t}_{**}}) )\land \neg R(x,\sigma_2(\bar{a}_{\bar{t}_{**}})) \} \]
is inconsistent. 
Finally, observe this clearly satisfies the definition of shearing, since the range of the map $f: Y_{\bar{s}} \rightarrow {^{2}N^\prime}$ given by 
\[ \bar{t}^\prime \mapsto ~\langle \sigma_1(\bar{a}_{\bar{t}^\prime}) \rangle ^\smallfrown  \langle \sigma_2(\bar{a}_{\bar{t}^{\prime}})\rangle \]
is a $\mk_\xc$-indiscernible sequence.  
This completes the discussion. 

\br Claim \ref{claim15a} now repeats this move in the context of an inductive 
argument, which gives the a priori stronger conclusion of $\xc$-unsuperstability; the minor but important new points to notice in the 
proof of \ref{claim15a} are the conditions there labelled (c), (e) which ensure that the 
images of the new Skolem functions at each inductive step are disjoint from those at earlier stages and 
allow us to ``continue along the independence property'' (in the random graph, disjointness is enough to ensure 
that the union of the formulas built at each step is indeed a type). 

\setcounter{theoremcounter}{3}

\begin{rmk} \label{rmk:ip} 
Given a complete theory $T$ and a formula $\vp(x,y)$, 
let $p(x)$ be a partial $\vp$-type in some $M \models T$. 
Let $\Gamma(x)$ be the infinite set of formulas expressing that $\vp(x,y)$ has the independence property. Observe that if 
$p(x) \cup \Gamma(x)$ is consistent, for any $\kappa$ there is $N \models T$, $M \preceq N$ containing a sequence 
$\langle b_i : i < \kappa \rangle$ such that $\vp$ has the independence property over this sequence [i.e., for any two finite 
disjoint $\sigma, \tau \subseteq \kappa$, $\{ \vp(x,b_i) : i \in \sigma \} \cup \{ \neg \vp(x,b_j) : j \in \tau \}$ is consistent] and 
moreover $p(x) \cup \{ \vp(x,b_i) \land \neg \vp(x,b_j) \}$ is consistent for any $i \neq j$. 
\end{rmk}

\begin{claim} \label{claim15a}
Let $\xc$ be a countable context and suppose $\xc$ has property $\ocirc$.  Then:
\begin{enumerate}
\item $\trg$ is $\xc$-unsuperstable; moreover, 
\item $(T, \vp)$ is $\xc$-unsuperstable, for any $T$, $\vp$ with the independence property.
\end{enumerate}
\end{claim}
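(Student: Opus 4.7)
The plan is to iterate the one-step construction from Discussion \ref{disc:x2} along a chain of finite sets $I_0 \subseteq I_1 \subseteq \cdots \subseteq I$ with $\bigcup_n I_n = I$, producing at each stage a new formula that shears over the parameters accumulated so far. The output will be a partial type $p = \{ \vp_n(x) : n < \omega \}$ together with parameter sets $B_0 \subseteq B_1 \subseteq \cdots$ such that $\vp_n \in p \rstr B_{n+1}$ and $\vp_n$ $(I_n, I_{n+1})$-shears over $B_n$, directly verifying Definition \ref{m5a}.

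First I fix any finite nonempty $I_0 \subseteq I$, a base template $\Phi_0 \in \Upsilon[\trg]$, and an $\aleph_0$-saturated $J \in \mk$ with $I \subseteq J$. Inductively, given $I_n$ and $\Phi_n$, apply property $\ocirc$ to $I_n$ to obtain $I_{n+1} \supseteq I_n$, enumerations $\bar{s}_n, \bar{t}_n$ of $I_n, I_{n+1}$, and quantifier-free formulas $E_1^n, E_2^n, F^n$ witnessing $\ocirc$ over $\bar{s}_n$ on the class $Y_{\bar{s}_n}$ of realizations of $\tpqf(\bar{t}_n, \bar{s}_n, I)$ in $J$. Introduce two fresh function symbols $\sigma_1^n, \sigma_2^n$ and expand $N_n = \GEM(J, \Phi_n)$ by interpreting them exactly as in Discussion \ref{disc:x2}, so that $\sigma_i^n$ collapses tuples along $E_i^n$ and $\sigma_1^n, \sigma_2^n$ agree precisely on $F^n$-matched pairs. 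The two points of extra care (beyond a single-stage construction) are: (c) choose the common image set $D_n \subseteq \dom(N_n)$ of $\sigma_1^n, \sigma_2^n$ disjoint from the skeleton and from $D_0 \cup \cdots \cup D_{n-1}$; and (e) extend the interpretation of $R$ on $D_n$ freely over the configuration built at earlier stages. Then I apply the Ramsey property to obtain $\Phi_{n+1} \geq \Phi_n$ preserving these identifications, set $B_{n+1} = B_n \cup \{\sigma_1^n(\bar{a}_{\bar{t}_n}), \sigma_2^n(\bar{a}_{\bar{t}_n})\}$, and define
\[ \vp_n(x) := R(x, \sigma_1^n(\bar{a}_{\bar{t}_n})) \land \neg R(x, \sigma_2^n(\bar{a}_{\bar{t}_n})). \]
The argument of Discussion \ref{disc:x2} then gives that $\vp_n$ $(I_n, I_{n+1})$-shears over $B_n$.

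To complete (1) it remains to check that $p = \{\vp_n : n<\omega\}$ is consistent, and this is where the freshness clause (c) earns its keep: every parameter of $\vp_n$ lies in $D_n$, so its $R$-relation to elements of $B_n$ was a free choice made at stage $n$, and can therefore be arranged to witness any finite boolean combination of the $\vp_m$'s with $m \le n$. Taking a suitable limit template $\Phi_\omega$ extending the chain $\Phi_0 \leq \Phi_1 \leq \cdots$ gives a single $\GEM$-model in which $p$ is realized and each $\vp_n$ shears as required, so $\trg$ is $\xc$-unsuperstable. For (2), the same construction goes through with $R(x,y)$ replaced by $\vp(x,y)$, the consistency of finite subsets now being supplied by Remark \ref{rmk:ip}: since the parameters introduced at stage $n$ are disjoint from all earlier ones, the independence property of $\vp$ realizes every finite boolean combination.

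The main obstacle will be the combinatorial bookkeeping around the iterated Ramsey step. I would need to confirm that Ramsey-expanding the template to $\Phi_{n+1}$ genuinely preserves both the $E_i^n, F^n$-encoded identifications and the freshness of $D_n$ with respect to all earlier $D_m$; a priori, the Ramsey procedure could collapse or re-identify some of these image values. The cleanest way to handle this is to encode the disjointness into the quantifier-free data of the skeleton itself (for example, by introducing at stage $n$ a new unary predicate naming $D_n$, or equivalently an additional sort) before invoking the Ramsey property, so that the disjointness is an invariant of the templates rather than an external condition to be maintained.
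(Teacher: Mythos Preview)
Your proposal is essentially the paper's own argument: iterate the one-step construction from Discussion~\ref{disc:x2}, introducing at each stage fresh function symbols $\sigma^n_1,\sigma^n_2$ encoding $E^n_1,E^n_2,F^n$, with disjointness from earlier stages (your (c)) and values landing in an independent sequence (your (e), via Remark~\ref{rmk:ip}), then Ramsey-extend and take the union of templates.

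Two small points of divergence are worth flagging. First, by setting $I_{n+1}$ equal to the set returned by $\ocirc$, you do not automatically get $\bigcup_n I_n = I$, which Definition~\ref{m5a}(a) requires; the paper instead fixes an enumeration of $I$ in advance, takes $I_n$ to be its first $n$ elements, applies $\ocirc$ to each $I_n$ to get some auxiliary $I_{n,1}$, and then reindexes at the end (since $\bar t_n$ need not lie in $I_{n+1}$). Second, your worry about whether the Ramsey step could collapse the carefully arranged identifications is handled in the paper without introducing extra predicates: conditions (a)--(d) are all statements about equalities and inequalities of terms applied to skeleton tuples, hence are invariants of the quantifier-free type of the indexing tuples in $J$, and therefore survive passage to any $\Phi_{n+1}\geq\Phi_n$ produced by the Ramsey property. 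Your suggested fix (adding a predicate for $D_n$) would also work but is not needed.
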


\begin{proof} 
Let $\langle s_\ell : \ell < \omega \rangle$ list $I_\xc$. Define $I_n  = \{ s_\ell : \ell < n \}$, so each $I_n \in [I_\xc]^{<\aleph_0}$. 
By induction on $n$, we shall define an increasing sequence $\Phi_n \in \Upsilon[\trg]$ (so $n<m$ implies $\Phi_n \leq \Phi_m$) and 
an increasing sequence $p_n$ of partial $\vp$-types such that $p_n(x) \cup \Gamma(x)$ is consistent for each $n$, where $\Gamma$ is from 
\ref{rmk:ip}.  (In the case of the random graph, take $\vp(x,y) = R(x,y)$. 
The length of $x$ need not be 1, but we will drop overlines for simplicity.) 

For $n = 0$, let $M_0 = \GEM(I, \Phi_0)$ for some $\Phi_0 \in \Upsilon[\trg]$, and let $p_0 = \{ x = x \}$.  

For $n+1$, suppose we have defined $\Phi_n$ and $M_n = \GEM(I, \Phi_n)$. 
Let $\bar{s}_n$ list $I_{n,0} :=  I_n$.  Apply $\ocirc$ with $I_{n,0}$ in place of $I_0$. 
Let $I_{n,1}$, $F_{n}$, $E_{n,1}$, $E_{n,2}$ be as returned by the definition of $\ocirc$, and 
let $\bar{s}_n$, $\bar{t}_n$ list $I_{n,0}$ and $I_{n,1}$ respectively. 

Next we define $N_n$.  \tcb{Let $J_n \supseteq I_n$ be $\aleph_0$-saturated.}
In the case of the random graph, let $N_n = \GEM(J_n, \Phi_n)$. As $\Phi$ is nice, $M_n \preceq N_n$ and $N_n \rstr \tau(\trg) \models \trg$.  
For an arbitrary theory, let $N_n$ be an elementary extension of $\GEM(J_n, \Phi_n)$ such that $N_n \setminus M_n$ 
contains a sequence $\langle b_i : i < |J_n| \rangle$ 
as in \ref{rmk:ip} for $\vp$ and the partial type $p_n$. 
In this case it follows additionally that $M_n \preceq N_n$ and $N_n \rstr \tau(T) \models T$.

Let $\xr_n = \tpqf(\bar{t}_n, \bar{s}_n, J_n)$, so as usual ${\xr_n}(J_n)$ is the set of realizations of $\xr_n$ in $J_n$. 

We may expand $N_n$ to $N^+_n$ by adding two new functions $\sigma_{n,1}$ and $\sigma_{n,2}$, interpreted so that:

\begin{enumerate}
\item[(a)] for $\ii = 1, 2$, whenever $\bar{t}^\prime, \bar{t}^{\prime\prime} \in {\xr_n}(J_n)$, 
$N^+_n \models \sigma_{n,\ii}(\bar{a}_{\bar{t}^\prime}, \bar{a}_{\bar{s}_n}) = \sigma_{n,\ii}(\bar{a}_{\bar{t}^{\prime\prime}}, \bar{a}_{\bar{s}_n})$ 
if and only if $J_n \models E_{n,\ii}(\bar{t}^\prime, \bar{t}^{\prime\prime}; \bar{s}_n)$. 

\item[(b)] $N^+_n\models \sigma_{n,1}(\bar{a}_{\bar{t}^\prime}, \bar{a}_{\bar{s}_n}) = \sigma_{n,2}(\bar{a}_{\bar{t}^{\prime\prime}}, \bar{a}_{\bar{s}_n})$ 
if and only if $J_n \models F_{n}(\bar{t}^\prime, \bar{t}^{\prime\prime}; \bar{s}_n)$. 

\item[(c)] for $m < n$ and $\ii, \ell = 1, 2$,
$N^+_n\models \sigma_{n,\ii}(\bar{a}_{\bar{t}_n}, \bar{a}_{\bar{s}_n}) \neq \sigma_{m,\ell}(\bar{a}_{\bar{t}_m}, \bar{a}_{\bar{s}_m})$.

\item[(d)] For any $\bar{u}$ realizing the same quantifier-free type as $\bar{s}_n$ in $J$, the analogues of 
$(a), (b), (c)$ hold with $\bar{u}$ in place of $\bar{s}_n$ (remembering to then replace $\bar{t}_n$ by some $\bar{v}$ 
such that $\tpqf(\bar{v}^\smallfrown \bar{u}, \emptyset, J_n) = \tpqf({\bar{t}_n} ~^\smallfrown \bar{s}_n, \emptyset, J_n)$. 

\item[(e)] $\sigma_{n}(\bar{a}_{\bar{t}_n}, \bar{a}_{\bar{s}_n}) = b_i$ for some $i < \kappa$, i.e. the functions choose elements 
from our independent sequence. 
[For the random graph, essentially the whole model is an independent sequence, 
so it suffices to ask that $\sigma_{n}(\bar{a}_{\bar{t}}, \bar{a}_{\bar{s}_n}) \notin M_n$.]

\end{enumerate}

Let $\Phi_{n+1} \in \Upsilon[\trg]$ extending $\Phi_n$ be given by applying the Ramsey property (\cite{MiSh:1149} Corollary 2.10) 
to $N^+_n$, and then 
(a), (b), (c), (d) will remain true in any template extending $\Psi_{n+1}$.  Moreover, 
\[ p_{n+1} := p_n \cup \{ \vp(x,\sigma_{n,1}(\bar{a}_{\bar{t}_n}, \bar{a}_{\bar{s}_n})) \land \neg \vp(x, \sigma_{n,2}(\bar{a}_{\bar{t}_n}, \bar{a}_{\bar{s}_n})) \} \]
is consistent, and remains consistent with $\Gamma$ from \ref{rmk:ip}. 

Let $\Phi_\omega = \bigcup_n \Phi_n$, so $\Phi_\omega \in \Upsilon[T]$. Consider any $\Psi \geq \Phi_\omega$ and let 
$N = \GEM_{\tau(T)}(I_\xc, \Psi)$.    For each $n$, apply the non-fixed-point clause of \ref{d:circ}(iii) to
observe that 
\[ \neg F_n (\bar{t}_n, \bar{t}_n ; \bar{s}_n) \]
i.e. in the model $N$, 
\[ \sigma_{n,1}(\bar{a}_{\bar{t}_n}, \bar{a}_{\bar{s}_n}) \neq \sigma_{n,2}(\bar{a}_{\bar{t}_n}, \bar{a}_{\bar{s}_n}). \]
Remembering (c) and (e) above, this ensures the following set of formulas is a type:
\[ p(x) = \{ \vp(x,\sigma_{n,1}(\bar{a}_{\bar{t}_n}, \bar{a}_{\bar{s}_n})) \land \neg  \vp(x, \sigma_{n,2}(\bar{a}_{\bar{t}_n}, \bar{a}_{\bar{s}_n})) : n < \omega \}. \]
So $p$ is a partial type in $N \rstr \trg$, and let us show that for every $n$ it $\xc$-shears over $I_n$.  Why? 
First notice that $\bar{t}_n \subseteq I$ by construction. Second, notice that by the positive part of condition \ref{d:circ}(iii), 
for each $n$ there exists $\bar{u}_n  \in 
{\xr_n}(J_n)$ such that 
\[  N \models \sigma_{n,1}(\bar{a}_{\bar{t}_n}, \bar{a}_{\bar{s}_n}) = \sigma_{n,2}(\bar{a}_{\bar{u}_n}, \bar{a}_{\bar{s}_n}). \] 
This means that 
\tcb{
\begin{align*} 
\{ \vp(x,\sigma_{n,1}(\bar{a}_{\bar{t}^\prime_n}, \bar{a}_{\bar{s}_n})) \land & \neg  \vp(x, \sigma_{n,2}(\bar{a}_{\bar{w}^\prime_n}, \bar{a}_{\bar{s}_n})) :  \\
& \tpqf(\bar{t}^\prime_n ~^\smallfrown \bar{w}^\prime_n, \bar{s}_n, J_n) = \tpqf(\bar{t_n} ~^\smallfrown \bar{w}_n, \bar{s}_n, J_n) \} 
\end{align*}
}
will be inconsistent. 
\tcb{(Note that this may not a priori give $(I_n, I_{n+1})$-shearing for all $n$ as $\bar{t}_n$ may not be in $I_{n+1}$, but this will 
hold by reindexing.)} 

Thus $T$ is not $\xc$-superstable, as desired.  
\end{proof}

For the random graph, we also have a converse. 

\begin{claim} \label{c:west}
Let $\xc$ be a countable context and suppose $\trg$ is not $\xc$-superstable. Then $\xc$ has property $\ocirc$. 
\end{claim}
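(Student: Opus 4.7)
The plan is to reverse the analysis from Discussion \ref{disc:x2}: the hypothesis gives a shearing instance, and for $\trg$ the quantifier-free syntax forces such shearing to come from a ``collision'' between a positive and a negative parameter along an indiscernible sequence, which is precisely the data $E_1, E_2, F$ required by $\ocirc$.

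Fix a finite $I_0 \subseteq I_\xc$. Using the witness to $\xc$-unsuperstability, I would enlarge $I_0$ (or re-index the exhausting chain $\langle I^*_n \rangle$) so that $I_0$ is one of the $I^*_n$, and then use the $(I^*_n, I^*_{n+1})$-shearing provided by hypothesis: an $\aleph_0$-saturated $J \supseteq I$, a $\mk$-indiscernible sequence $\mb = \langle \bar{b}_{\bar u} : \bar u \in {^{\omega >}J[I_0]} \rangle$ over some $B$, a tuple $\bar{t}$ enumerating $I_1 := I^*_{n+1}$ with $\bar{c} = \bar{b}_{\bar{t}}$, and a formula $\vp(\bar{x},\bar{c})$ in the partial type $p$ for which $\{\vp(\bar{x}, \bar{b}_{\bar{t}'}) : \bar{t}' \in Y_{\bar{s}}\}$ is inconsistent, where $\bar{s}$ lists $I_0$ and $Y_{\bar{s}}$ is the set of realizations of $\tpqf(\bar{t},\bar{s},J)$ in $J$.

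The main content is to extract the collision. By quantifier elimination in $\trg$ I reduce $\vp$ to (a disjunction of) conjunctions of literals among $\{R,=\}$; since $\vp(\bar{x},\bar{c})$ is itself consistent (it belongs to $p$), the inconsistency of the whole family must come from two instances $\vp(\bar{x},\bar{b}_{\bar t_1})$, $\vp(\bar{x},\bar{b}_{\bar t_2})$ which are pairwise inconsistent, and in $\trg$ this forces the existence of a variable index $\alpha$ and parameter indices $\beta_+, \beta_-$ such that $\vp$ contains both $R(x_\alpha,y_{\beta_+})$ and $\neg R(x_\alpha,y_{\beta_-})$ and such that, for some $\bar t_1, \bar t_2 \in Y_{\bar s}$, we have $b^{\beta_+}_{\bar{t}_1} = b^{\beta_-}_{\bar{t}_2}$ in $N$. (A parallel, easier analysis handles any collision driven by equality literals.) I now define, on $Y_{\bar s}\times Y_{\bar s}$, the relations $E_1: b^{\beta_+}_{\bar{x}_1} = b^{\beta_+}_{\bar{x}_2}$, $E_2: b^{\beta_-}_{\bar{x}_1} = b^{\beta_-}_{\bar{x}_2}$, and $F: b^{\beta_+}_{\bar{x}_1} = b^{\beta_-}_{\bar{x}_2}$. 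By $\mk$-indiscernibility of $\mb$ over $B$, each of these relations depends only on $\tpqf(\bar{x}_1{}^\smallfrown \bar{x}_2, \bar{s}, J)$, so each is defined by an infinitary quantifier-free formula in $\tau(\mk)$ over $\bar{s}$ (a disjunction over the relevant quantifier-free types).

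Verification of the three clauses of Definition \ref{d:circ} is then short: $E_1,E_2$ are equivalence relations because equality of specific coordinates is; $F$ descends to a one-to-one partial function $Y_{\bar s}/E_1 \to Y_{\bar s}/E_2$ by an immediate definition chase; it is nonempty by the witnesses $\bar{t}_1, \bar{t}_2$; and it has no fixed points, because $F(\bar{t},\bar{t})$ would mean $b^{\beta_+}_{\bar{t}} = b^{\beta_-}_{\bar{t}}$, which (together with the literals of $\vp$) would make $\vp(\bar{x},\bar{b}_{\bar t})$ internally inconsistent, and then by $\mk$-indiscernibility so would $\vp(\bar{x},\bar{c})\in p$, a contradiction. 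The main obstacle I anticipate is the reduction step that pinpoints exactly which atomic collision produces the inconsistency (bookkeeping among the various atoms $R$ and $=$ in $\vp$), and the technical but routine point of arranging that the chosen $I_0$ can be made to match one of the $I^*_n$; the rest of the argument is essentially formal given the motivating analysis of Discussion \ref{disc:x2}.
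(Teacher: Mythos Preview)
Your proposal is correct and follows essentially the same route as the paper's proof: both use quantifier elimination in $\trg$ to locate a positive/negative collision $b^{\beta_+}_{\bar t_1}=b^{\beta_-}_{\bar t_2}$ along the $\mk$-indiscernible sequence, define $E_1,E_2,F$ by equality of the relevant coordinates, and verify \ref{d:circ}(i)--(iii) via $\mk$-indiscernibility and the consistency of $\vp(\bar x,\bar b_{\bar t})$. The paper likewise handles the passage from an arbitrary finite $I_0$ to a member of the exhausting chain by simply choosing $I_n\supseteq I_0$ from the unsuperstability witness and then renaming, just as you anticipate.
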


\begin{proof}
In this proof we appeal directly to the definition of $\xc$-shearing from 
Definition \ref{e5}, and 
$\xc$-unsuperstability, Definition \ref{m5a}. 
Let some finite $I_0 \subseteq I$ be given, and we will show how to find the rest of the data so that $\ocirc$ is satisfied. Recall 
from Definition \ref{m5a} that:
\begin{quotation}
we say $T$ is \emph{unsuperstable} for $\xc$ when there are: 
\begin{enumerate}
\item[(a)] 
an increasing sequence of nonempty finite sets $\langle I_n : n <\omega \rangle$ with 
$I_m \subseteq I_{n} \subseteq I$ for $m < n <\omega$ and $\bigcup_n I_n = I$, 
which are given along with a choice of enumeration $\bar{s}_n$ for each $I_n$ 
where $\bar{s}_n \tlf \bar{s}_{n+1}$ for each $n$ 
\item[(b)] 
an increasing sequence of nonempty, possibly infinite, sets $B_n \subseteq B_{n+1} \subseteq \mathfrak{C}_T$ in the monster 
model for $T$, with $B := \bigcup_n B_n$
\item[(c)] and a partial type  $p$ over $B$, such that
\end{enumerate}
\[ p \rstr B_{n+1}~~ (I_n, I_{n+1})\mbox{-shears over }B_n. \]
\end{quotation}
Let $I_n$  from Definition \ref{m5a} be a finite subset of $I$ containing $I_0$. 
Let $\bar{s}$ be an enumeration of $I_n$, and let $\bar{t}$ be an enumeration of $I_{n+1}$.
Let $N$ be the monster model for $\trg$. 
Let $p$ be the type given by \ref{m5a} which $(I_n, I_{n+1})$-shears.  [We can ignore the $B_{n+1}, B_n$ from that definition.] 
In the remainder of the proof, we use $I_0, I_1$ instead of $I_n, I_{n+1}$. 
Let $\vp(x,\bar{b}_{\bar{t}}) \in p$, $\mathbf{b}$ be the formula 
and $\mk$-indiscernible sequence in $N$ which witness this instance of shearing.  Recall from Definition \ref{e5} that:
\begin{quotation}
\noindent we say  
\emph{the formula $\vp(\bar{x}, \bar{c})$ shears }over $A$ in $M$ for $(I_0, I_1, \xc)$ 
when there exist a model $N$, a sequence $\mb$ in $N$, enumerations $\bar{s}$ of $I_0$ and $\bar{t}$ of $I_1$, and an $\aleph_0$-saturated $J \supseteq I$ such that:
\begin{enumerate}
\item $I_0 \subseteq I_1$ are finite subsets of $I$
\item $M \preceq N$
\item $\mb = \langle \bar{b}_{\bar{t}^\prime} : \bar{t}^\prime \in {^{\omega >}(J[I_0])} \rangle $ is $\mk$-indiscernible in $N$ over $A$
\item $\bar{c} = \bar{b}_{\bar{t}}$, and 
\item the set of formulas 
\[ \{ \vp(\bar{x}, \bar{b}_{\bar{t}^\prime}) : \bar{t}^\prime \in {^{\lgn(\bar{t})}(J)}, 
\tpqf(\bar{t}^\prime, \bar{s}, J) = \tpqf(\bar{t}, \bar{s}, I) \} \]
is contradictory.
\end{enumerate}
\end{quotation}
Note that by the conditions on 
$p$ in \ref{m5a}, the formula $\vp(\bar{x}; \bar{y}, \bar{z}, \bar{w})$ will be nonalgebraic.
\tcb{By quantifier elimination, it will be expressible as a disjunction of statements of the form: $x_i$ (some element of 
$\bar{x}$) has an edge to some of the $y$'s, a non-edge to some of the $z$'s (which are disjoint from the $y$'s), and is not equal to any of the $w$'s.  Since inconsistencies must come from asserting that some $x_i$ both 
connects and does not connect to the same parameter, we can reduce to considering a single disjunct of this form, 
and assuming $\lgn(\bar{x}) = 1$.} 

Let $J$ be $\aleph_0$-saturated such that $I \subseteq J \in \mk$.
Let $n = \lgn(\bar{b}_{\bar{t}})$. Fixing notation, let $\bar{b}_{\bar{t}} = \langle \bar{b}_{\bar{t}, i} : i < n \rangle$. 
Since $\vp(x,\bar{b}_{\bar{t}})$ is a consistent nonalgebraic formula in the random graph, we can write $n$ 
as the union of sets $A$, $B$, $C$ \tcb{(where $A, B$ are disjoint)} so that \tcb{without loss of generality}\footnote{$\vp$ is a disjunction of 
such formulas, so at least one is consistent.}
\[ \vp(x,\bar{b}_{\bar{t}}) \equiv \bigwedge_{i \in A} R(x,\bar{b}_{\bar{t}, i}) \land \bigwedge_{j \in B} \neg R(x,\bar{b}_{\bar{t}, j})
\land \bigwedge_{k \in C} x \neq \bar{b}_{\bar{t}, k}. \]
Define $Y_{\bar{s}} = \{ \bar{t}^\prime \in {^{\lgn(\bar{t})}J} : \tpqf(\bar{t}^\prime, \bar{s}, J) = \tpqf(\bar{t}, \bar{s}, J) \}$. 
In order to define $E_1, E_2$, observe that since the set of formulas 
\[ \{ \vp(\bar{x}, \bar{b}_{\bar{t}^\prime}) : \bar{t}^\prime \in {^{\lgn(\bar{t})}(J)}, 
\tpqf(\bar{t}^\prime, \bar{s}, J) = \tpqf(\bar{t}, \bar{s}, I) \} \]
is contradictory, there must be some $\bar{t}_\alpha, \bar{t}_\beta \in Y_{\bar{s}}$ and $i_\alpha \in A$, $j_\beta \in B$ such that 
\[ \bar{b}_{\bar{t}_\alpha, i_\alpha} = \bar{b}_{\bar{t}_\beta, j_\beta}. \]
Let $E_1(\bar{x}_1, \bar{x}_2; \bar{s})$ be the following two-place relation on $Y_{\bar{s}}$:
\[ E_1(\bar{t}_1, \bar{t}_2; \bar{s}) \mbox{ ~ iff ~} N \models \bar{b}_{\bar{t}_1, i_\alpha} = \bar{b}_{\bar{t}_2, i_\alpha} .  \]
Let $E_2(\bar{x}_1, \bar{x}_2; \bar{s})$ be the following two-place relation on $Y_{\bar{s}}$:
\[ E_2(\bar{t}_1, \bar{t}_2; \bar{s}) \mbox{ ~ iff ~} N \models \bar{b}_{\bar{t}_1, j_\beta} = \bar{b}_{\bar{t}_2, j_\beta}.  \]
Then for $\ell = 1, 2$, clearly:
\begin{itemize}
\item $E_\ell = E_\ell(\bar{x}_1, \bar{x}_2; \bar{s})$ is an equivalence relation on $Y_{\bar{s}}$. 
\item The truth value of 
``$E_\ell(\bar{t}_1, \bar{t}_2; \bar{s})$'' is determined by $\tpqf({\bar{t}_1} ^\smallfrown \bar{t}_2, \bar{s}, J)$, for any  
$\bar{t}_1, \bar{t}_2 \in Y_{\bar{s}}$, in other words, it is an invariant of the quantifier-free type (thus, said to be 
definable by a possibly infinitary quantifier-free formula).
\end{itemize}
Finally, define $F(\bar{x}_1, \bar{x}_2; \bar{s})$, a two place relation on $Y_{\bar{s}}$, by: 
$F(\bar{t}_1, \bar{t}_2; \bar{s})$ if and only if 
\[ N \models \bar{b}_{\bar{t}_1, i_\alpha} = \bar{b}_{\bar{t}_2, j_\beta} .  \]
Then $F(\bar{x}_1, \bar{x}_2; \bar{s})$  
defines a nonempty one to one partial function from $Y_{\bar{s}}/E_1(-,-; \bar{s})$ to $Y_{\bar{s}}/E_2(-,-; \bar{s})$.

It remains to check that $F$ has no fixed points.  Recall that for our original tuple $\bar{t}$, 
we know that 
\[\{ \bar{b}_{\bar{t}, i} : i \in A \} \cap \{ \bar{b}_{\bar{t}, j} : j \in B \} = \emptyset \] 
because $\vp(x,\bar{b}_{\bar{t}})$ is a consistent $\tau(\trg)$-formula in $N$. Thus, 
$J \models \neg F(\bar{t}, \bar{t}; \bar{s})$. Since $F$ is definable by a possibly 
infinitary quantifier-free formula,  this remains true for all tuples from $Y_{\bar{s}}$.  
This completes the verification of $\ocirc$. 
\end{proof}

\subsection{Discussion: obtaining $\ocirc$ directly from the example in \ref{sub:ex}} For completeness, we 
work out the analysis of \ref{sub:ex}.  
Recall equations (\ref{eq:as1}) and (\ref{eq:as}) there.   
Recall that $\bar{t}_\alpha, \bar{t}_\beta$ come from equation (\ref{eq:p}) for our given $\alpha$, $\beta$ 
(so $\bar{v}_*$ and $\bar{w}_*$ share their respective quantifer-free types over $\bar{s}$).  
As $J$ is $\aleph_0$-homogeneous, there are 
$\bar{v}$, $\bar{w}$ from $J$ such that 
\[ \tpqf(\bar{v}_* ~^\smallfrown \bar{w}, \bar{s}, J) = \tpqf(\bar{v} ~^\smallfrown \bar{w}_*, \bar{s}, J)  
= \tpqf(\bar{t}_\alpha ~^\smallfrown \bar{t}_\beta, \bar{s}, I).     \]
(It may be that $\bar{v} = \bar{v}_*$ or $\bar{w} = \bar{w}_*$, and if so, no problem.)

Let $\bar{t} = \bar{s} ~^\smallfrown \bar{t}_\alpha ~^\smallfrown \bar{t}_\beta$.
Without loss of generality, we may simultaneously replace 
$\bar{v}_*$ and $\bar{w}_*$ respectively by 
\[  \bar{s} ~^\smallfrown \bar{v}_* ~^\smallfrown \bar{w}, 
 \mbox{ \hspace{3mm} } \bar{s} ~^\smallfrown \bar{v} ~^\smallfrown \bar{w}_*. \] 
So for us, 
\[ Y_{\bar{s}} =  \{ \bar{t}^\prime \in {^{\lgn(\bar{t})}J} : 
\tpqf(\bar{t}^\prime, \bar{s}, J) = \tpqf(\bar{t}, \bar{s}, J) \} \]
and this set includes $\bar{v}_*$ and $\bar{w}_*$. 
After possibly adding dummy variables to $\sigma_\alpha$, $\sigma_\beta$ we may assume their operation is unchanged. 
Let $E_1(\bar{x}_1, \bar{x}_2; \bar{s})$ be the following two-place relation on $Y_{\bar{s}}$:
\[ \mbox{ $E_1(\bar{t}_1, \bar{t}_2; \bar{s})$  ~ iff ~ $N \models$ ``$\sigma_\alpha(\bar{a}_{\bar{t}_1}) = \sigma_\alpha(\bar{a}_{\bar{t}_2})$''. } \]
Let $E_2(\bar{x}_1, \bar{x}_2; \bar{s})$ be the following two-place relation on $Y_{\bar{s}}$:
\[ \mbox{ $E_2(\bar{t}_1, \bar{t}_2; \bar{s})$  ~ iff ~ $N \models$ ``$\sigma_\beta(\bar{a}_{\bar{t}_1}) = \sigma_\beta(\bar{a}_{\bar{t}_2})$''. } \]
Then for $\ell = 1, 2$, clearly:
\begin{itemize}
\item $E_\ell = E_\ell(\bar{x}_1, \bar{x}_2; \bar{s})$ is an equivalence relation on $Y_{\bar{s}}$. 
\item The truth value of 
``$E_\ell(\bar{t}_1, \bar{t}_2; \bar{s})$'' is determined by $\tpqf({\bar{t}_1} ^\smallfrown \bar{t}_2, \bar{s}, J)$, for any  
$\bar{t}_1, \bar{t}_2 \in Y_{\bar{s}}$, in other words, it is an invariant of the quantifier-free type (thus, said to be 
definable by a possibly infinitary quantifier-free formula).
\end{itemize}
Finally, define $F(\bar{x}_, \bar{x}_2; \bar{s})$, a two place relation on $Y_{\bar{s}}$, by: 
$F(\bar{t}_1, \bar{t}_2; \bar{s})$ if and only if 
\[ N \models \sigma_\alpha(\bar{a}_{\bar{t}_1}) = \sigma_\beta(\bar{a}_{\bar{t}_2}).  \]
Then $F$ naturally defines a subset $X_{\bar{s}}$ of $Y_{\bar{s}} \times Y_{\bar{s}}$ i.e. if 
$\bar{t}_1, \bar{t}_2 \in Y_{\bar{s}}$ then the truth value of $F(\bar{t}_1, \bar{t}_2; \bar{s})$ is the same for any 
$(\bar{t}^\prime_1, \bar{t}^\prime_2) \in (\bar{t}_1/E_1(-,-,\bar{s})) \times (\bar{t}_2/E_2(-,-,\bar{s}))$. 
$F$ is not empty because of equation (\ref{eq:as}), and respects the equivalence relations.  
(If $(\bar{t}^\prime_\ell, \bar{t}^{\prime\prime}_\ell) \in F$ for $\ell = 1, 2$ then 
$\bar{t}^\prime E_1 \bar{t}^\prime_2$ iff $\bar{t}^{\prime\prime}_1 E_2 \bar{t}^{\prime\prime}_2$. 
Together we get that $F$ is a 1-to-1 partial function from $Y/E_1$ into $Y/E_2$, so by homogeneity of $J$ it is a 
function \tcb{with full domain and range}.)

It remains to show that $F$ has no fixed points. Now, whether or not $F(\bar{u}, \bar{u}; \bar{s})$ is an invariant of the quantifier-free type 
$\qftp(\bar{u}, \bar{s}, J)$.  We know that 
\[ N \models \sigma_\alpha(\bar{a}_{\bar{t}}) \neq \sigma_\beta(\bar{a}_{\bar{t}})  \]
by our definition of $\bar{t}$, because the original type $p$ was consistent. It follows that 
\[ \neg F(\bar{t}, \bar{t}; \bar{s}) \]
 and that this is an invariant of $\qftp(\bar{t}, \bar{s}, I)$. 
So it will remain be true for any tuple from $Y_{\bar{s}}$.
This proves $F$ has no fixed points, which completes the verification of $\ocirc$ and the discussion.

\vspace{3mm}

Summarizing, we arrive at:

\begin{theorem} \label{t:circle} 
Let $\xc$ be a countable context. 
$\trg$ is $\xc$-unsuperstable if and only if $\xc$ has property $\ocirc$.
\end{theorem}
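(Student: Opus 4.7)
The plan is to observe that this theorem is simply the combined statement of the two directions already established in Claims \ref{claim15a} and \ref{c:west}, so the proof reduces to assembling these.

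For the direction ``$\ocirc$ implies $\xc$-unsuperstability,'' I would invoke Claim \ref{claim15a}(1), which says precisely that if $\xc$ has property $\ocirc$ then $\trg$ is $\xc$-unsuperstable. The proof there proceeded by induction on $n$, building an increasing chain of templates $\Phi_n \in \Upsilon[\trg]$ and partial types $p_n$ using the equivalence relations $E_1, E_2$ and the partial function $F$ supplied by $\ocirc$ applied to each finite $I_n$; the Ramsey property was used to pass from the expansion $N_n^+$ to a new template, and the non-fixed-point clause \ref{d:circ}(iii) guaranteed that each added formula $\vp(x, \sigma_{n,1}(\cdots)) \land \neg \vp(x, \sigma_{n,2}(\cdots))$ remained consistent, so the resulting chain witnesses $\xc$-unsuperstability in the sense of Definition \ref{m5a}.

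For the reverse direction ``$\xc$-unsuperstability implies $\ocirc$,'' I would invoke Claim \ref{c:west}: given any finite $I_0 \subseteq I$, locate an $I_n$ from the witnessing chain of Definition \ref{m5a} containing $I_0$, let $\bar{s}$ enumerate it and $\bar{t}$ enumerate $I_{n+1}$, and extract from the witness of shearing a formula $\vp(x,\bar{b}_{\bar{t}})$ that, by quantifier elimination in $\trg$, reduces to a single disjunct asserting edges to some coordinates in $A$, non-edges to coordinates in $B$, and non-equality to coordinates in $C$. The equivalence relations $E_1, E_2$ are then defined by equality of the coordinates $\bar{b}_{\bar{t}, i_\alpha}$ and $\bar{b}_{\bar{t}, j_\beta}$ witnessing the inconsistency, and $F$ is defined by equality across the positive and negative sides. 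The fact that these are invariants of the quantifier-free type comes from the $\mk$-indiscernibility of $\mb$, and the absence of fixed points is forced by consistency of $\vp(x,\bar{b}_{\bar{t}})$.

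The main obstacle, strictly speaking, is not novel content but the bookkeeping required to verify that $I_0$ is used correctly in each direction: in the $\Leftarrow$ direction one must verify that $I_0$ can be enlarged freely (since $\ocirc$ quantifies over \emph{all} finite subsets of $I$, not just those appearing in the chain), and in the $\Rightarrow$ direction one must ensure that the index set $I_n$ chosen can always be enlarged to accommodate any given $I_0$, which is immediate from the construction of the chain in Claim \ref{claim15a} since $\bigcup_n I_n = I$. With these verifications in place, the theorem follows immediately by conjoining the two claims.
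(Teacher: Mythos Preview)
Your proposal is correct and takes essentially the same approach as the paper: the paper's proof of Theorem \ref{t:circle} is simply the one-line citation ``Claim \ref{claim15a} and Claim \ref{c:west},'' and you have identified exactly these two claims as the two directions. Your additional summaries of those claims' proofs are accurate; the only minor slip is in your last paragraph, where you attribute the chain with $\bigcup_n I_n = I$ to ``the construction of the chain in Claim \ref{claim15a}'' when in the $\Rightarrow$ direction that chain is given directly by the hypothesis of $\xc$-unsuperstability (Definition \ref{m5a}), not by Claim \ref{claim15a}---but this is a labeling issue, not a mathematical one.
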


\begin{proof}
Claim \ref{claim15a} and Claim \ref{c:west}. 
\end{proof}

We now give a positive and a negative example of $\ocirc$. 
First, we verify that $\trg$ is $\xc$-unsuperstable for contexts coming from linear orders. 
In the following example, the context need not be countable. 

\begin{expl} \label{xyz-lin}
Let $\mk$ be the class of infinite linear orders, Let $I \in \mk$ \tcb{be $\aleph_0$-saturated}. Then $\xc = (I, \mk)$ has $\ocirc$.
\end{expl}

\begin{proof}
Let $M \models \trg$ and let $\langle a_t :  t \in I \rangle$ be any infinite indiscernible sequence. 
Recall that if 
$\bar{t} = t_0t_1$ we let $\bar{a}_{\bar{t}}$ denote ${a_{t_0}}^\smallfrown a_{t_{1}}$.  
Choose $t^*_0, t^*_1 \in I$ with $t^*_0 < t^*_1$. Let $\xr = \tpqf({t^*_0} ^\smallfrown {t^*_1}, \emptyset, I)$.  
(In this example, $\bar{s} = \langle \rangle$, $\bar{t} = \langle t^*_0, t^*_1 \rangle$ are the corresponding values from $\ocirc$.)
Recall that ${\xr}(I)$ is the set of realizations of 
$\xr$ in $I$.  Then 
\[ \{ \bar{a}_{\bar{t}} : \bar{t} \in {^2 I},  \bar{t} \in {\xr}(I) \} \]
is a $\mk$-indiscernible sequence. 
Define $E_0, E_1, E_2$ on $\xr(I)$ as follows: 
\[ E_1(t_1t_2, t^\prime_1 t^\prime_2) \equiv t_1 = t^\prime_1 \]
Now 
\[ E_2(t_1t_2, t^\prime_1 t^\prime_2) \equiv t_2 = t^\prime_2 \]
\[ E_0(t_1t_2, t^\prime_1 t^\prime_2) \equiv t_1 = t^\prime_2. \]
We can also see the shearing directly: \tcb{since $\bar{a}_{\bar{t}}$ for $\bar{t} \in {\xr}(I)$ has length two, let 
$\vp = \vp(x;y,z)$ and then the set}
\[   \{ \vp(x,\bar{a}_{\bar{t}}) : \bar{t} \in \xr(I) \}  \]
is inconsistent since for every $t_1t_2 \in \xr(I)$ \tcb{and the homogeneity of $I$}, 
there is $t_0 \in I$ such that $t_0 t_1 \in \xr(I)$. 
\end{proof}

There are also natural examples which do not have $\ocirc$. In the following example, 
$I$ also need not be countable. 

\begin{expl}  \label{xyz-nonlin}
Let $\mk_\mu$ be the class of linear orders expanded by $\mu$ unary predicates which partition the domain. 
Let $I \in \mk_\mu$ and suppose that for each predicate $P_\alpha$, $|P^I_\alpha | \leq 1$. Let 
$\xc = (I, \mk_\mu)$. Then $\xc$ does not have $\ocirc$. 
\end{expl}

\begin{proof}
See \cite{MiSh:1124} Claim 5.10, where it is shown that for any such $\xc$ and $M = \GEM(I, \Phi)$, 
if $p$ is a type of $M \rstr \tau(\trg)$ then there is $\Psi \geq \Phi$ in which $p$ is realized. 
(It follows that $\trg$ is $\xc$-superstable.)  \emph{For the interested reader, the main idea of that proof is 
quite close to that of Claim \ref{trg-ss} below; just leave out the relations $R$ on $J$.}
\end{proof}

\br

\begin{concl} \label{c:unstable}
The following are equivalent:  
\begin{enumerate}
\item[(a)]  The formula $\vp$ is stable $($with respect to the complete theory $T$). 

\item[(b)]  $(T, \vp)$ is $\xc$-superstable for every countable context $\xc$. 

\end{enumerate}
\end{concl}

\begin{proof}
For (a) implies (b): This is because stable formulas have definitions, thus a fortiori weak definitions. We assemble 
some facts from \cite{MiSh:1149} for a more detailed proof. 
Suppose for a contradiction that $(T, \vp)$ were $\xc$-unsuperstable for some countable context $\xc = (I, \mk)$.  
Let $M = \GEM(I, \Phi)$. 
Then \cite{MiSh:1149} Corollary 6.16 would build a larger template $\Phi_*$ 
so that for any $\Psi \geq \Phi_*$, the model $\GEM(I, \Psi)$ is not $\aleph_1$-saturated for $\vp$-types.  
In particular, that construction builds a $\vp$-type $p$ which does not have a weak $\bar{t}_*$-definition for any finite 
$\bar{t}_*$ in $I$, and moreover cannot have one in any $\GEM(I, \Psi)$ for any $\Psi \geq \Phi_*$, see \cite{MiSh:1149} 
Remark 6.15 or Corollary 4.25.  However, 
by \cite{MiSh:1149} Claim 4.15, in any $\GEM(I, \Psi)$ any type in a stable formula has a definition, thus a weak definition. 
Contradiction.

For (b) implies (a): Suppose first that $\vp$ is not simple (has the tree property). Then it has long dividing chains, and since 
dividing implies shearing (\cite{MiSh:1149}, Claim 5.8 and Remark 5.9), $\vp$ is not $\xc$-superstable. 
If $\vp$ is simple unstable, $\vp$ has the independence property, and we can apply Claim \ref{claim15a}(b). 
\end{proof}

\br
\section{Interlude on ``$\operatorname{eq}$''}  \label{s:eq}
\setcounter{theoremcounter}{0}

In this section, we observe that it isn't an accident that our negative
examples of $\ocirc$ have a certain form. 
There is a nice further explanation of this, and of $\ocirc$, once we define the analogue of $M^{\operatorname{eq}}$ in this context. 

\begin{defn} 
We say that the countable context $\xc = (I, \mk)$ is \emph{essentially separated} when there is a finite $I_0 \subseteq I$ 
such that $s \neq t \in I$ implies that $\tpqf(s, I_0, I) \neq \tpqf(t, I_0, I)$. 
\end{defn}

\begin{conv}
\tcb{In this section, take $\xc$ to be arbitrary but fixed, and write $\mk$, $I$ for $\mk_\xc$, $I_\xc$ respectively.}  
 \end{conv}
 
The idea is that  ``$\neg \ocirc$'' is for all intents and purposes 
essentially separated (one might say, is essentially \emph{essentially separated}). 
In this section, we outline a proof of this, which involves defining the analogue of $M^{\operatorname{eq}}$ for contexts, 
defining for any context $\xc$ a so-called ``$\operatorname{eq}$-extension'' 
and then pointing out that 
the property $\neg \ocirc$ for $\xc$ is really saying that some such ``$\xc^{\eq}$'' is well behaved. 
Since the section is primarily explanatory, we will be brief. 

\begin{defn} \label{a2}
We say the context $\xd$ is an $\eq$-extension of our context $\xc$ when there is 
$\bar{E}$ such that:
\begin{enumerate}
\item $\bar{E} = \langle E_i : i < i_* \rangle$, $\bar{\vp} = \langle \vp_j : j < j_* \rangle$
\item $E_i = E_i(\bar{x}_i, \bar{y}_i)$ is a possibly infinitary quantifier-free formula in the vocabulary $\tau(\mk)$, with 
$\lgn(\bar{x}_i) = \lgn(\bar{y}_i) =:n_i$, such that for every 
$I^\prime \in \mk$ it defines an equivalence relation on tuples of $I^\prime$ of length $n_i$.  \\ We stipulate $E_0 = $``$x=y$''.  
$($Alternately, list the defining formulas $\vp_i$ for $E_i$ separately.$)$
\item For $I \in \mk$, let $I^+$ be the analogue of $\mathfrak{C}^{\eq}$ using the $E_i$'s for $i<i_*$.\footnote{
\tcb{Why not all possible  $E$s? It seems better to build the restriction into the definition, as otherwise 
for some values of $i_*$, $j_*$ we may lose countability.}} \\ That is, 
\begin{enumerate}
\item the universe of $I^+$ is $I \cup \{ \bar{t}/E^{I}_i : i < i_*, \bar{t} \in {^{n_i}I } \}$,  
\\ but we identify $t/E_0$ with $t$ for $t \in I$.
\end{enumerate}
\tcb{and as a signature we have the following symbols, given with their interpretations:}
\begin{enumerate}
\item $P^{I^+} = P^I$ for $P \in \tau(\mk)$ a predicate. 

\item $P^{I^+}_* = I$. 

\item for $F \in \tau(\mk)$ a function symbol, if any, $F^{I^+} = F$, so its domain is defined.

\item $F^{I^+}_i = \{ (\bar{t}, \bar{t}/E^{I}_i) : \bar{t} \in {^{n_i}I}   \}$.

\item $P^{I^+}_{\vp_j} = \{ \langle \bar{t}_{i_0}/E_{i_0}, \dots, \bar{t}_{i_{m(j)-1}}/E_{i_{m(j)-1}} \rangle : 
m(j) < \omega, i_\ell < i_*$, and $I \models \vp[\bar{t}_{i_0}, \bar{t}_{i_1}, \dots ] \}$. 

\item $\mk_\xd = \{ I^+ : I \in \mk_\xc \}$, and $I_\xd = (I_\xc)^+$.

\item \tcb{Summarizing, letting $\tau = \tau(\mk_\xd)$ consist of the symbols just given, } we have defined $\xd = (I_\xd, \mk_\xd)$. 

\end{enumerate}
\end{enumerate}
\end{defn}

\begin{defn}
We say $\xd = \xc^{\eq}$ when we use all possible $E_i, \vp_i$ up to equivalence. $($Since we have required these 
sequences to be countable, this of course puts some restrictions on the contexts $\xc$ for which $\xc^{\eq}$ is presently defined. We will 
not at present require such a canonical extension to exist, but it is reasonable to define it.$)$
\end{defn}

\begin{claim} \label{a5}
For any countable context $\xc$, any $\xd$ defined from it as in $\ref{a2}$ is also a context.   Moreover, if 
$\lambda \geq |I_\xc| + | \{ \tp(\bar{s}, \emptyset, I) : \bar{s} \in {^{\omega >}I}, I \in \mk_\xc \}|$ then also 
$\lambda \geq |I_\xd| + | \{ \tp(\bar{s}, \emptyset, I) : \bar{s} \in {^{\omega >}I}, I \in \mk_\xd \}|$. 
\end{claim}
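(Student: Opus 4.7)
My plan is to unpack the definition of ``context'' in turn and show each clause transfers from $\xc$ to $\xd$, with the cardinality bookkeeping at the end.

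First, I would verify the structural closure properties of $\mk_\xd$. Closure under isomorphism is immediate from the uniformity of the construction $I \mapsto I^+$ as a functor from $\mk_\xc$ to $\mk_\xd$. For closure under submodels and increasing chains: a $\tau(\mk_\xd)$-substructure of $I^+$ must be closed under the $F_i$'s, hence is of the form $(I')^+$ for $I' = P_*^{I^+} \cap (\text{substructure})$, which is itself a $\tau(\mk_\xc)$-substructure of $I$; an increasing chain in $\mk_\xd$ is therefore the $(-)^+$-image of an increasing chain in $\mk_\xc$. For $\aleph_0$-saturated extensions: given $I^+ \in \mk_\xd$, take an $\aleph_0$-saturated extension $J \supseteq I$ in $\mk_\xc$ and verify that $J^+ \supseteq I^+$ is $\aleph_0$-saturated in $\mk_\xd$; this reduces to the observation that a quantifier-free type of a finite tuple $\bar{c}$ in $J^+$ over a finite $A \subseteq I^+$ is determined by (i) which entries of $\bar{c}$ lie in $P_*$, (ii) the (fixed) equivalence-relation indices associated to the other entries, and (iii) the quantifier-free $\tau(\mk_\xc)$-type in $J$ of an associated tuple from $J$ representing everything, so realizability follows from $\aleph_0$-saturation of $J$.

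Next I would check that $\mk_\xd$ is Ramsey. The key observation is that a quantifier-free type of a tuple in any $J^+ \in \mk_\xd$ is determined by the quantifier-free $\tau(\mk_\xc)$-type in $J$ of a single ``witnessing'' tuple consisting of (a) the $P_*$-entries themselves and (b) chosen representatives for each of the equivalence-class entries. So given a sequence indexed by $J^+$ of the shape needed for the Ramsey property, it can be coded as a sequence indexed by $J$ (with $F_i^{J^+}$ appearing as Skolem-like functions in the expanded signature), the Ramsey property of $\mk_\xc$ applied to that, and the resulting template read off back in $\mk_\xd$. The non-triviality conditions on $I_\xd = (I_\xc)^+$ transfer similarly: it is closed under the new functions $F_i$ by construction; if a finite $A \subseteq I_\xd$ generated it, then the preimage $A' \subseteq I_\xc$ of the $P_*$-part of $A$ together with tuple-representatives would generate $I_\xc$, contradicting the assumption on $\xc$; and the mild algebraicity conditions promised at the start of the Preliminaries transfer by the same ``reduce-to-representatives'' strategy.

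For the cardinality statement, I would simply count. By construction $|I_\xd| \leq |I_\xc| + \sum_{i<i_*} |{}^{n_i}I_\xc|/E_i$, and since $E_i$ is defined by a quantifier-free formula, the number of $E_i$-classes is bounded by the number of quantifier-free types of tuples of length $n_i$ in $I_\xc$; with $i_* \leq \aleph_0$ this gives $|I_\xd| \leq \lambda$. For the quantifier-free types in $\mk_\xd$: by the observation used above for saturation, each such type is coded by a quantifier-free $\tau(\mk_\xc)$-type of a longer tuple (witnessing tuple in $J$) together with a choice of sorts $i < i_*$ and a choice of predicates $\vp_j$ to record, so $|\{\tpqf(\bar{s},\emptyset,J^+) : \bar{s} \in {}^{\omega>}J^+, J^+ \in \mk_\xd\}| \leq \lambda$ as well.

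The part I expect to take the most care is the Ramsey step, because one has to check that the coding-by-representatives is compatible with the template formalism in $\tau(\Psi)$ uniformly; once the coding is set up, however, Ramsey for $\mk_\xd$ is a direct pullback of Ramsey for $\mk_\xc$. Everything else is essentially bookkeeping.
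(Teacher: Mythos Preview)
The paper does not prove this claim: it is stated without proof, immediately followed by Definition~\ref{a10}. The authors explicitly frame \S\ref{s:eq} as ``primarily explanatory'' and say ``we will be brief,'' so the claim is evidently regarded as a routine verification left to the reader. Your proposal is therefore not being compared against a written argument but against an implicit one.

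That said, your outline is the natural one and hits the right points: the functoriality of $I\mapsto I^+$, reduction of quantifier-free types in $J^+$ to quantifier-free $\tau(\mk_\xc)$-types of representing tuples in $J$, and the cardinality count via $i_*,j_*\leq\aleph_0$. You are also right to flag Ramsey as the step needing the most care; your ``code by representatives and pull back a template'' sketch is the expected move, and the paper's own later use of $J^+$ (in the proof of Claim~\ref{a13}) tacitly relies on exactly this reduce-to-representatives principle.

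One small caution on the submodel clause: your assertion that every $\tau(\mk_\xd)$-substructure of $I^+$ is of the form $(I')^+$ is not literally true as stated, since an equivalence class $\bar t/E_i$ can sit in a substructure without any representative $\bar t$ lying entirely in its $P_*$-part (the $F_i$ are only partial functions, defined on $P_*$-tuples). You will want either to close up by adjoining representatives, or to check that the relevant notion of ``universal'' in \cite{MiSh:1149} Definition~2.3 is forgiving enough that this does not matter. This is a bookkeeping wrinkle rather than a genuine obstruction, and is presumably part of why the authors did not write the proof out.
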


\begin{defn} \label{a10}
For $I \in \mk_\xc$ and finite $\bar{s} \in {^{\omega >}I}$: 
\begin{enumerate}
\item let $\dcl(\bar{s}, I) = \{ t \in I : $ we cannot find $J \in \mk_\xc$, $\bar{w} \in {^{\lgn{\bar{s}}}J}$ and $t_1 \neq t_2 \in J$ 
such that for $\ell = 1, 2$, 
$\tpqf({\bar{w}}^\smallfrown \langle t_\ell \rangle, \emptyset, J) = \tpqf({\bar{s}}^\smallfrown \langle t \rangle, \emptyset, J)$. 

\item let $\acl(\bar{s}, I)$ be defined similarly, replacing ``$t_1 \neq t_2$'' by $\langle t_\ell : \ell < \omega \rangle$. 
\end{enumerate}
\end{defn}

\begin{claim} \label{a13}  For a countable context $\xc$ the following are equivalent. 

\begin{enumerate}
\item[(a)] $\xc$ satisfies $\ocirc$. 

\item[(b)] for some $\eq$-extension $\xd$ of $\xc$, for every finite $\bar{s}$ from $I_\xd$, there are $r_0 \neq r_1 \in I_\xd$ realizing the same 
complete quantifier-free type over $\bar{s}$.

\end{enumerate}
\end{claim}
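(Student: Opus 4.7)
The proof splits into the two implications; I sketch the approach for each.

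For (a) $\Rightarrow$ (b), I would build $\xd$ explicitly by collecting all the witnesses supplied by $\ocirc$. Since $I_\xc$ is countable, there are only countably many finite $I_0 \subseteq I_\xc$; for each, $\ocirc$ supplies (possibly infinitary) qf-formulas $E_1, E_2, F$ of $\tau(\mk_\xc)$. Adding all such triples to the lists $\bar{E}, \bar{\vp}$ defining $\xd$ keeps the lists countable, so by Claim \ref{a5} the resulting $\xd$ is a legitimate countable $\eq$-extension of $\xc$. To verify (b), take any finite $\bar{s} \subseteq I_\xd$; expand it to $\bar{s}^* \subseteq I_\xc$ by replacing each equivalence-class parameter $\bar{t}/E_i$ by its underlying tuple $\bar{t}$ (since the function symbol $F_i \in \tau(\mk_\xd)$ recovers $\bar{t}/E_i$ from $\bar{t}$, qf-type agreement over $\bar{s}^*$ entails agreement over $\bar{s}$). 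Apply $\ocirc$ to $\bar{s}^*$: non-emptiness together with the no-fixed-point property of $F$ force at least one of $Y_{\bar{s}^*}/E_1$ or $Y_{\bar{s}^*}/E_2$ to contain two distinct classes; pick $\bar{t}_0, \bar{t}_1 \in Y_{\bar{s}^*}$ in different classes of that quotient and set $r_\ell = \bar{t}_\ell / E_j$ for the relevant $j$. Then $r_0 \neq r_1$ in $I_\xd$, and agreement of qf-types over $\bar{s}^*$ follows because every $P_{\vp_k}$-membership in $\tau(\mk_\xd)$ over parameters in $\bar{s}^*$ factors through an $E_j$-invariant $\tau(\mk_\xc)$-formula applied to the representatives $\bar{t}_\ell$, which agree since $\bar{t}_0, \bar{t}_1 \in Y_{\bar{s}^*}$.

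For (b) $\Rightarrow$ (a), fix a finite $I_0 \subseteq I_\xc$. Apply (b) to the image of $I_0$ in $I_\xd$ to obtain $r_0 \neq r_1 \in I_\xd$ with $\tpqf(r_0, I_0, I_\xd) = \tpqf(r_1, I_0, I_\xd)$, and write $r_\ell = \bar{t}_\ell/E_{i_\ell}$. Let $I_1 = I_0 \cup \range(\bar{t}_0) \cup \range(\bar{t}_1) \subseteq I_\xc$ with enumerations $\bar{s}, \bar{t}$ of $I_0, I_1$. The key translation step is that the qf-type of $(r_0, r_1)$ over $I_0$ in $I_\xd$ is captured by the collection of all $\tau(\mk_\xc)$-qf-formulas $\psi(\bar{x}, \bar{y}, \bar{s})$ that are invariant under $E_{i_0}$ in $\bar{x}$ and under $E_{i_1}$ in $\bar{y}$ and hold of $(\bar{t}_0, \bar{t}_1)$. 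Define $E_1, E_2$ as the lifts of $E_{i_0}, E_{i_1}$ to length-$\lgn(\bar{t})$ tuples by projection onto the coordinates of $\bar{t}_0, \bar{t}_1$ (with equality on the remaining coordinates), and define $F(\bar{x}_1, \bar{x}_2, \bar{y})$ as the (infinitary) conjunction of such formulas $\psi$. Manifestly $F$ is $E_1, E_2$-invariant and nonempty; one verifies it is a one-to-one partial function on $Y_{\bar{s}}/E_1 \to Y_{\bar{s}}/E_2$ using the $\aleph_0$-homogeneity of an $\aleph_0$-saturated extension $J \supseteq I_\xc$. The no-fixed-point condition follows because $\neg E_{i_0}(\bar{x}, \bar{y})$ (in the case $i_0 = i_1$) or the sort distinction (in the case $i_0 \neq i_1$) is among the $\psi$'s entering the conjunction, forcing $F(\bar{u}, \bar{u}, \bar{s})$ to fail for every $\bar{u} \in Y_{\bar{s}}$.

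The main obstacle is the verification of the one-to-one partial function clause for $F$: invariance and nonemptiness are easy, but ruling out ambiguity of $F$ as a relation on quotient classes requires the saturated extension $J$ and a careful homogeneity argument, possibly restricting $F$ to a single joint qf-type of $(\bar{t}_0, \bar{t}_1)$ over $\bar{s}$ (rather than the full conjunction) to force a well-defined pairing. Handling the case $i_0 \neq i_1$ cleanly also needs extending $E_1, E_2$ so that the sort distinction in $I_\xd$ is reflected at the level of $\tau(\mk_\xc)$-formulas; the case $i_0 = i_1$ is more natural, since the disequality $r_0 \neq r_1$ reduces directly to $\neg E_{i_0}(\bar{t}_0, \bar{t}_1)$, which slots straight into the fixed-point-free check.
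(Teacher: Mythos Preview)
Your (a) $\Rightarrow$ (b) direction has a genuine gap. You pick $\bar{t}_0, \bar{t}_1 \in Y_{\bar{s}^*}$ lying in different $E_j$-classes, but $Y_{\bar{s}^*}$ is a subset of the $\aleph_0$-saturated $J$, not of $I_\xc$. Hence your $r_\ell = \bar{t}_\ell/E_j$ lands in $J^+$, not in $I_\xd$, whereas (b) explicitly demands $r_0, r_1 \in I_\xd$. The only tuple of $Y_{\bar{s}^*}$ that $\ocirc$ guarantees to lie in $I_\xc$ is the single $\bar{t}$ enumerating $I_1$; nothing in the definition promises a second realization of $\xr$ inside $I_\xc$, let alone one in a different $E_j$-class. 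The paper's proof sidesteps this by using that one $\bar{t}$ twice, with the two \emph{different} equivalence relations: $r_1 = \bar{t}/E_1$ and $r_2 = \bar{t}/E_2$. Both then sit in $I_\xd$ automatically, and the no-fixed-point clause of $\ocirc$ is exactly what separates them.

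For (b) $\Rightarrow$ (a) your approach is in the right spirit but more convoluted than necessary, and the difficulty you flag (verifying that $F$ really is a one-to-one partial function on quotients) is an artifact of your construction. The paper first reduces to a common representative $\bar{t}_1 = \bar{t}_2 = \bar{t}$ (a WLOG step you omit) and then defines $F$ \emph{diagonally}: in $J^+$ set $F = \{ (\bar{u}^\smallfrown\bar{v}/E_1,\ \bar{u}^\smallfrown\bar{v}/E_2) : \bar{u}^\smallfrown\bar{v} \text{ realizes } \tpqf(\bar{s}^\smallfrown\bar{t}, \emptyset, I_\xd) \}$. This is visibly a well-defined bijection between the two quotient sets, with no appeal to homogeneity needed, and the fixed-point-free check reduces immediately to $r_1 \neq r_2$. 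Your infinitary-conjunction $F$ can be made to work, but only after restricting to a single joint qf-type as you yourself suspect, at which point it collapses to the paper's diagonal definition.
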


\begin{proof} 
\tcb{Informally, $\ocirc$ speaks about particular equivalence relations being in an anti-diagonal correspondence, 
and $\eq$-extensions speak about all equivalence relations; so this is observing that we should be able to use existence of the first 
to get elements which do not look alike to the second, and vice versa. 
The slightly longer direction is (b) implies (a).  Let $\xd$ be the eq-extension of $\xc$ given by hypothesis, and notice that as underlying sets 
$I_\xc = I_\xd$, though of course as models the second is an expansion of the first. Fix 
some finite $I_0 \subseteq I_\xc$ (so also $\subseteq I_\xd$). Let $\bar{s}$ enumerate $I_0$, and we have to find 
$I_1$, $\bar{t}$, $E_1$, $E_2$ as promised. 
By hypothesis we can find $r_0 \neq r_1 \in I_\xd$ (so also in the set $I_\xc$) realizing the same complete quantifier-free type over $\bar{s}$. 
Let $\bar{t}_1$ and $E_1$ be such that $r_1 = \bar{s}^\smallfrown{\bar{t}_1}/E_1$, and $\bar{t}_1 \subseteq I_\xc$.  
Let $\bar{t}_2$ and $E_2$ be such that $r_2 = \bar{s}^\smallfrown{\bar{t}_2}/E_2$, and $\bar{t}_2 \subseteq I_\xc$. 
Without loss of generality, $\bar{t}_1 = \bar{t}_2$, call it $\bar{t}$, and without loss of generality 
$\bar{s}^\smallfrown \bar{t}$ has no repetitions. Let $J \in \mck_\xc$ be $\aleph_0$-saturated extending $I_\xc$, so also its corresponding 
expansion $J^+ \in \mck_\xd$ is $\aleph_0$-saturated extending $I_\xd$.  
We define $E_i(\bar{x}_1, \bar{x}_2, \bar{y})$ where $\lgn(\bar{x}_1) = \lgn(\bar{x}_2) = \lgn(\bar{t})$, $\lgn(\bar{y}) = \lgn(\bar{s})$ by:
$E_i(\bar{t}^\prime_1, \bar{t}^\prime_2, \bar{s}^\prime)$ iff [$\bar{s}^\prime ~^\smallfrown \bar{t}^\prime_1$ realizes 
$\tpqf(\bar{s}^\smallfrown \bar{t}, \emptyset, I_\xc)$, and 
$\bar{s}^\prime ~^\smallfrown \bar{t}^\prime_2$ realizes 
$\tpqf(\bar{s}^\smallfrown \bar{t}, \emptyset, I_\xc)$, and 
if $r, r^\prime$ are in $I_\xd$ and $\bar{s}^\prime ~^\smallfrown \bar{t}^\prime ~^\smallfrown r^\prime$, 
$\bar{s}^\prime ~^\smallfrown \bar{t}^\prime ~^\smallfrown \bar{r}^{\prime\prime}$ realize $\tpqf(\bar{s}^\smallfrown \bar{t} ~^\smallfrown r_\ell, 
\emptyset, I_\xc)$]. 
It remains to define $F$, as follows. In $J^+$ we let 
\[ F = \{ (\bar{u}^\smallfrown \bar{v}/E_1, \bar{u}^\smallfrown \bar{v}/E_2) : \bar{u}^\smallfrown \bar{v} \subseteq J^+ \mbox{ realizes } 
\tpqf(\bar{s}^\smallfrown\bar{t}, \emptyset, I_\xd = I^+_\xc )\}. \] 
For (a) implies (b), let $r_\ell = \bar{t}_\ell/E^\ell_{\bar{s}}$ for $\ell = 1, 2$ as in the definition of $\ocirc$. 
Then $r_1 = \bar{t}/E^1_{\bar{s}}$, $r_2 = \bar{t}/E^2_{\bar{s}}$, and $F_{\bar{s}}(r_1) = r_2$, hence $r_1 \neq r_2$. }
\end{proof}

\vspace{3mm}

\section{Separating the random graph and $T_{n,k}$}
\setcounter{theoremcounter}{0}

We now show one can separate $\trg$ and $T_{3,2}$ using countable contexts. 
That is, for each $n>k\geq 2$, 
we prove that there are countable contexts 
$\xc$ for which $\trg$ is $\xc$-superstable but $T_{n,k}$ is  $\xc$-unsuperstable, Theorem \ref{t:disting}.

\begin{defn} \label{d:k-nk}
Let $\mk_{n,k}$ be the following index model class. In $\tau(\mk_{n,k})$ we have 
a binary relation $<$, unary predicates $\{ P_q : q \in \mathbb{Q} \}$, and a $(k+1)$-place relation $R$, and on $I \in \mk$:
\begin{itemize}
\item $<$ is a linear order
\item the $P_q$ are disjoint unary predicates which partition $I$
\item $R$ is symmetric irreflexive (a hyperedge), and has no cliques of size $n+1$, i.e. 
for any distinct $i_0, \dots, i_{n}$ from $I$ it is not the case that $R^I$ holds on all $k+1$-element subsets of 
$\{ i_0, \dots, i_n \}$. [Note this is a universal statement.]
\end{itemize}
\end{defn}

\begin{claim}
$\mk_{n,k}$ is a Ramsey class and $($satisfies our hypotheses$)$. 
\end{claim}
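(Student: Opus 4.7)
The plan is to verify the four requirements of an index model class in turn: closure under substructures, closure under unions of increasing chains, existence of $\aleph_0$-saturated extensions, and the Ramsey property. Closure under isomorphism and the first two closure conditions are immediate; the real content lies in the last two, with the Ramsey property being the main obstacle.

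Universality is clear because all defining conditions are universal or $\forall$-inductive: being a linear order, the $P_q$ being pairwise disjoint, and the $(k{+}1)$-hypergraph $R$ having no $(n{+}1)$-clique are all $\forall$-statements, while the partition axiom $\forall x \bigvee_{q \in \mathbb{Q}} P_q(x)$ is preserved by taking substructures (each element keeps its color in the ambient model) and by unions of increasing chains. For $\aleph_0$-saturated extensions, I would invoke a Fra\"iss\'e-style construction on the subclass $\mk_{n,k}^{<\aleph_0}$ of finite members, which has the hereditary, joint embedding, and amalgamation properties. Amalgamation is done freely over a common substructure $A$: combine $B_1$ and $B_2$ by taking the set-theoretic union over $A$, interleave the linear orders consistently with the cuts in $A$ that each outside vertex determines (breaking ties arbitrarily), and add no new hyperedges across $B_1 \setminus A$ and $B_2 \setminus A$. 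No $(n{+}1)$-clique can arise, because any such clique would require a $(k{+}1)$-hyperedge spanning both sides. The resulting Fra\"iss\'e limit is $\aleph_0$-homogeneous and $\aleph_0$-universal, hence $\aleph_0$-saturated in the sense of the excerpt; an arbitrary $I \in \mk_{n,k}$ is then embedded into an $\aleph_0$-saturated $J$ by amalgamating $I$ with the limit and iterating to realize all finitely generated quantifier-free types.

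The main obstacle is the Ramsey property, which I would establish in two steps. First, by the Ne\v{s}et\v{r}il--R\"odl theorem, the class of finite linearly ordered $K^{k+1}_{n+1}$-free $(k{+}1)$-hypergraphs is a Ramsey class. Second, I would upgrade this to accommodate the $\mathbb{Q}$-indexed unary partition by a standard product-Ramsey argument. Fix $A \subseteq B$ in $\mk_{n,k}^{<\aleph_0}$ and $r < \omega$; only finitely many $P_q$'s are realized on $B$, say $q_0, \dots, q_m$, and let $N$ denote the number of $\{q_0, \dots, q_m\}$-colorings of a copy of $A$. Apply Ne\v{s}et\v{r}il--R\"odl to the ordered-hypergraph reducts to obtain a finite ordered $K^{k+1}_{n+1}$-free hypergraph $C_0$ with the arrow $C_0 \to (B^\flat)^{A^\flat}_{N \cdot r}$; expand $C_0$ to $C \in \mk_{n,k}^{<\aleph_0}$ by an appropriate assignment of colors from $\{q_0, \dots, q_m\}$. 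Given an $r$-coloring of the copies of $A$ in $C$, refine it to an $(N \cdot r)$-coloring that additionally records the color-pattern realized by the copy; a monochromatic copy of $B^\flat$ for this refined coloring, together with the initial choice of colors on $C$, then produces a copy of $B$ (respecting its colors) monochromatic for the original coloring. Finally, Scow's theorem (\cite{scow2} Theorem 4.31) converts this combinatorial Ramsey property into the $\GEM$-model Ramsey property required by the definition of an index model class.
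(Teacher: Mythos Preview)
Your treatment of universality and $\aleph_0$-saturated extensions is fine and more detailed than the paper, which takes these as routine. The gap is in the Ramsey step.

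The product-Ramsey bootstrap you sketch does not work as written. The refined $(N\cdot r)$-coloring records, for each copy of $A^\flat$ in $C_0$, the pattern of $P_q$-colors it inherits from $C$; so inside a monochromatic copy of $B^\flat$, every sub-copy of $A^\flat$ carries the \emph{same} pattern. This does not force the monochromatic $B^\flat$ to carry the coloring of $B$. Concretely, take $A$ to be a single $q_0$-point and $B$ to be two points of colors $q_0,q_1$: then $A^\flat$ is a single point, and monochromaticity of the refined coloring forces both points of the returned $B^\flat$ to have the same $P_q$-color in $C$, so it can never be a copy of $B$ --- regardless of how ``appropriately'' $C_0$ was colored. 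Adding a unary partition does preserve the Ramsey property, but the proof requires an iterated application of the base Ramsey property (or a partite-style construction), not a single call as you describe.

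The paper sidesteps this entirely: it cites the general Ne\v{s}et\v{r}il--R\"odl theorem for ordered relational classes with forbidden substructures, in the form of Hubi\v{c}ka--Ne\v{s}et\v{r}il \cite{hn} Theorem~3.21, applied directly to the full signature $\{<,R\}\cup\{P_q:q\in\mathbb{Q}\}$. That theorem handles the unary partition and the forbidden $(n{+}1)$-clique simultaneously, so no bootstrap from the pure hypergraph case is needed. Scow's reduction (in the paper, \cite{scow3} Theorem~3.12 rather than \cite{scow2}) is then invoked, as you also do, to pass from the combinatorial Ramsey property of finite substructures to the version required of an index model class.
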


\begin{proof}
The key point is being a Ramsey class. By a theorem of Scow, 
it suffices to check this for the class of finite substructures of members of $\mk$, see \cite{scow3} Theorem 3.12, 
 so we may cite the general Ne\v{s}et\v{r}il-R\"{o}dl theorem for relational structures, see \cite{hn} Theorem 3.21.
\end{proof}

For the next few claims we shall use:

\begin{defn} \label{d:xcnk} Given $n>k\geq 2$, 
let $\xc_{n,k}$ be the context $(I, \mk)$ where $\mk  = \mk_{n,k}$ and 
$I$ has domain $\mathbb{Q}$, $<^I$ is the usual linear order on $\mathbb{Q}$, $P^I_q = \{ q \}$, and 
$R^I = \emptyset$. 
\end{defn}

\begin{disc} \emph{Clearly each $\xc_{n,k}$ is a countable context. Recall the intent of $n,k$: the edge relation $R$ has 
arity $k+1$, and the forbidden configuration is a `large clique' where large means size $n+1$. 
Note that since we make no demands on $R$ in \ref{d:k-nk} other that having no large cliques, there is no problem 
choosing an $I$ in which are there no instances of $R$ at all.  However, the point will be that when we consider 
$\aleph_0$-saturated $J \supseteq I$, instances of $R$ will appear.}
\end{disc}

\begin{claim} \label{trg-ss}
$\trg$ is $\xc_{n,k}$-superstable, for any $n>k\geq 2$.
\end{claim}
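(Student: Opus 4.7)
My plan is to apply Theorem \ref{t:circle} and show that $\xc_{n,k}$ does not have property $\ocirc$. I would prove this via the type-realization route used for Example \ref{xyz-nonlin} through \cite{MiSh:1124} Claim 5.10: it suffices to show that for every $\Phi \in \Upsilon[\trg]$, every $M = \GEM(I, \Phi)$, and every nonalgebraic type $p$ over $M \rstr \tau(\trg)$, there exists $\Psi \geq \Phi$ in $\Upsilon[\trg]$ such that $p$ is realized in $\GEM(I, \Psi)$. By the analysis of Discussion \ref{sub:ex} culminating in equations (\ref{eq:as})--(\ref{eq:pl}), the only obstruction to producing such a $\Psi$ is the existence of a ``bad intersection'' of positive and negative lines, which is precisely the combinatorial content of $\ocirc$; hence the realization statement is equivalent to $\neg \ocirc_{\xc_{n,k}}$.

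Concretely, I would fix $\Phi$, write $p$ in the form of equation (\ref{eq:p}), choose an $\aleph_0$-saturated $J \supseteq I$ in $\mk_{n,k}$, and set $N = \GEM(J, \Phi)$. I would introduce a new constant $c$ to be interpreted in a suitable expansion of $N$ according to the requirements of $p$, then apply the Ramsey property of $\mk_{n,k}$ (available by the Ne\v{s}et\v{r}il-R\"{o}dl theorem invoked in the preceding claim) to close this expansion to a template $\Psi \geq \Phi$ proper for $\mk_{n,k}$ in which $c$ realizes $p$. For this to succeed, the essential requirement is that there be no bad coincidence: whenever $\alpha, \beta < \kappa$ satisfy $\ii_\alpha \neq \ii_\beta$ and $\bar{v}, \bar{w}$ are finite tuples in $J$ realizing $\xr_\alpha, \xr_\beta$ respectively over $I_0$, we need $\sigma^N_\alpha(\bar{a}_{\bar{v}}) \neq \sigma^N_\beta(\bar{a}_{\bar{w}})$.

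The main obstacle, and the only real content beyond the proof of \cite{MiSh:1124} Claim 5.10, is verifying this disjointness of lines in the presence of the $(k+1)$-ary hyperedge $R$ on $J$. Two structural features of $\xc_{n,k}$ make the argument go through. First, on $I$ the predicates are singletons ($P^I_q = \{q\}$), so every element of $I$ is rigidly identified by its $P$-predicate; any candidate identification $\sigma^N_\alpha(\bar{a}_{\bar{v}}) = \sigma^N_\beta(\bar{a}_{\bar{w}})$ with $\bar{v}, \bar{w}$ entirely from $I$ collapses to $\bar{v} = \bar{t}_\alpha$ and $\bar{w} = \bar{t}_\beta$, which would already contradict the consistency of $p$ in $M$. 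Second, because $R^I = \emptyset$, the qf-types $\xr_\alpha$ and $\xr_\beta$ carry no positive $R$-information, so the $R$-edges that appear in $J$ are ``off to the side'' of the structure controlled by $I_0$; the no-$(n+1)$-clique constraint on $\mk_{n,k}$ guarantees that such auxiliary edges are admissible combinatorially without conspiring with the $P$-rigidity to force a new identification among the $\sigma^N$ values.

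Once this disjointness is in hand, assembling $\Psi$ is routine. Expand the skeleton interpretation of $N$ by declaring $R(c, \sigma^N_\alpha(\bar{a}_{\bar{t}_\alpha}))$ iff $\ii_\alpha = 1$ together with $c \neq \sigma^N_\alpha(\bar{a}_{\bar{t}_\alpha})$ throughout; then invoke the Ramsey property of $\mk_{n,k}$ to propagate this data coherently to a template $\Psi \geq \Phi$ proper for $\mk_{n,k}$. In $\GEM(I, \Psi)$, the interpretation of $c$ realizes $p$, yielding $\neg \ocirc_{\xc_{n,k}}$ and thus $\xc_{n,k}$-superstability of $\trg$.
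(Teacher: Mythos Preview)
Your overall strategy---reduce to $\neg\ocirc$ via type realization, following the template of Example \ref{xyz-nonlin}---is legitimate and close in spirit to the paper's direct no-shearing proof; both routes come down to the same combinatorial core, namely ruling out a ``bad coincidence'' $\sigma^N_\alpha(\bar{a}_{\bar{v}}) = \sigma^N_\beta(\bar{a}_{\bar{w}})$ (equivalently $\bar{b}_{\bar{v},i} = \bar{b}_{\bar{w},j}$) with $\ii_\alpha \neq \ii_\beta$ for tuples $\bar{v},\bar{w}$ in the $\aleph_0$-saturated $J$. The gap is that your two ``structural features'' do not actually rule this out. Your first point only handles tuples lying in $I$, where the singleton predicates rigidify everything---but that is the trivial case. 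In $J$ each $P_q$ has infinitely many elements, so the predicates no longer pin down tuples, and the collision could perfectly well involve $\bar{v},\bar{w}$ with coordinates in $J\setminus I$. Your second point, that $R^I=\emptyset$ so the $R$-edges in $J$ are ``off to the side'' and do not ``conspire'' with $P$-rigidity, is not an argument: nothing you have said prevents two tuples in $J$ sharing the right predicates, order, and $R$-pattern from producing a collision under the terms $\sigma_\alpha,\sigma_\beta$.

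The paper supplies exactly this missing combinatorics, and it is the main content of the proof. Given a putative collision witnessed by $\bar{v},\bar{w}$, let $\bar{u}$ enumerate their common elements. By a minimality-and-swap argument (using $\aleph_0$-saturation of $J$), one may assume that in every $<$-interval determined by consecutive elements of $\bar{u}$, all $\bar{v}$-coordinates lie strictly below all $\bar{w}$-coordinates. A density lemma---realizations of the quantifier-free type of any single element over a finite set are dense in its $<$-interval of $J$, since moving one point never creates an $(n{+}1)$-clique---then lets one slide in $\bar{z}$ between $\bar{v}$ and $\bar{w}$ with $\tpqf(\bar{v}{}^\smallfrown\bar{w},\bar{s},J) = \tpqf(\bar{v}{}^\smallfrown\bar{z},\bar{s},J) = \tpqf(\bar{z}{}^\smallfrown\bar{w},\bar{s},J)$. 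Transitivity of equality then forces $\bar{a}_{\bar{z},i}=\bar{a}_{\bar{z},j}$, contradicting consistency of the original formula. The presence of $R$ on $J$ is handled precisely in this density step, not by a general remark that $R$ is harmless; your proposal needs this sliding argument, or an equivalent, to close.
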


\begin{proof}
Fix $n,k$ for the course of the proof and let $\xc = \xc_{n,k}$, $I = I_\xc, \mk = \mk_\xc$.
It will suffice to show there is no instance of shearing for non-algebraic formulas $\vp(x,\bar{a})$ in the monster model for the random graph. 

\br
\noindent\emph{Step 1: Analysis of the problem.}
Suppose for a contradiction that there were some:
\begin{itemize}
\item finite $\bar{s}, \bar{t}$ from $I$ 
({without loss of generality 
$\bar{t}$ is strictly increasing 
without repetition, and $\bar{s}$ is a subsequence of $\bar{t}$})
\item $\xr = \qftp(\bar{t}, \bar{s}, I)$
\item $J$ which is 
$\aleph_0$-saturated with $I \subseteq J \in \mk$, 
\item a $\mk$-indiscernible sequence $\{ \bar{b}_{\bar{t}^\prime} : \bar{t}^\prime \in \xr(J) \}$ in the 
monster model for $\trg$,   
\item and a formula $\vp(x,\bar{y})$ with $\ell(\bar{y}) = \ell(\bar{b}_{\bar{t}})$, 
\end{itemize}
such that 
\begin{equation}
\label{eq:cont0}
\vp(x,\bar{b}_{\bar{t}}) \mbox{ is consistent  and non-algebraic }
\end{equation}
however 
\begin{equation}
\label{eq:cont1} \{ \vp(x,\bar{b}_{\bar{t}^\prime}) : \bar{t}^\prime \in \xr(J) \}  \mbox{ is inconsistent.} 
\end{equation}
As before, it suffices to consider the case where $\ell(x) = 1$, and we may partition 
$m = \ell(\bar{b}_{\bar{t}})$ into sets $A$, $B$, $C$ \tcb{(with $A$, $B$ disjoint, and by nonalgebraicity, $C = m$)} such that 
\[ \vp(x,\bar{b}_{\bar{t}}) \equiv \bigwedge_{i \in A} R(x,\bar{b}_{\bar{t}, i}) \land \bigwedge_{j \in B} \neg R(x,\bar{b}_{\bar{t}, j})
\land \bigwedge_{k \in C} x \neq \bar{b}_{\bar{t}, k}. \]
As we have excluded dividing because of equality, 
the contradiction in (\ref{eq:cont1}) must be because we have $\bar{v}, \bar{w} \in \xr(J)$ and 
$i, j$ such that $i \in A$, $j \in B$, and 
\begin{equation} \label{eq:coll} 
\bar{a}_{\bar{v}, i} = \bar{a}_{\bar{w}, j}. 
\end{equation}
Recall here that $\bar{a}_{\bar{v}, i}$ denotes the $i$-th element of the tuple $\bar{a}_{\bar{v}}$. 

\br
\noindent\emph{Step 2: A property that we would like $\bar{v}, \bar{w}$, our witnesses to collision, to have.} 
Let $\bar{u}$ enumerate, in increasing order, the set 
$\rn(\bar{v}) \cap \rn(\bar{w})$ of elements common to both sequences. 
(By the second line of Step 1, this will include the elements of $\bar{s}$.) 
Here we will follow an idea from the proof of 
\cite{MiSh:1124}, Claim 5.10. By ``an interval of consecutive elements of $\bar{u}$'' we shall mean a set of elements which are all 
less than $u_0$, or all greater than $u_{\lgn(\bar{u})-1}$, or all strictly between $u_i, u_{i+1}$ for some 
$0 \leq i < \lgn(\bar{u})-1$. 

Now consider the following potential property of $\bar{v}, \bar{w}$: 
\begin{quotation}
$(\star)$ within each interval of consecutive elements of $\bar{u}$, 
all elements of $\bar{v}$ falling in this interval are strictly below all elements of $\bar{w}$ falling in the same interval. 
\end{quotation}
In this step and the next, let us show that we may assume our $\bar{v}, \bar{w}$ (which were chosen to witness the contradiction) also 
satisfy $(\star)$. 

Suppose not, that is, suppose we chose our $\bar{v}$, $\bar{w}$ so that\footnote{Informally, the sum over all intervals of $\bar{u}$  
of the number of elements of $\bar{w}$ less than elements of $\bar{v}$ in each given interval.} 
\[ \{ (i,j) : v_i, w_j \mbox{ fall in the same interval of $\bar{u}$ but \tcb{$w_j \leq v_i$} } \} \]
is minimized, but 
we were not able to choose this number to be zero. That is, within at least one interval, say
$(u_i, u_{i+1})$ of $\bar{u}$, we have 
elements $v_k, w_j$ such that the following holds. (If one of the endpoints is 
$+\infty$ or $-\infty$, the same argument applies substituting this notation throughout.) 
\begin{equation*}
\begin{split}
u_i < \{ v \in \bar{v} : ~u_i < v < v_k \} \cup  \{ & w \in \bar{w} :  ~u_i < w < w_j  \} \\ 
& < w_j ~\mbox{\tcb{$\leq$}}~ v_k <  \\ 
 \{ v \in \bar{v} :  v_k < & ~ v < u_{i+1} \} \cup  \{ w \in \bar{w} : ~ w_j < w < u_{i+1} \} < u_{i+1}  \\
\end{split}
\end{equation*}
where some or all of the sets in the first and third lines may be empty. 
Recalling that $J$ is $\aleph_0$-saturated, we will justify in the next step that we may choose $w^\prime_j, v^\prime_k$
so that  
\begin{itemize}
\item  $w_j < v^\prime_k < w^\prime_j < v_k$ ~\emph{and}
\item writing $\bar{w}^\prime$ for the result of substituting $w^\prime_j$ for $w_j$ in $\bar{w}$, and 
writing $\bar{v}^\prime$ for the result of substituting $v^\prime_k$ for $v_k$ in $\bar{v}$, we have that 
\[ \tpqf(\bar{v}^\smallfrown \bar{w}, \bar{s}, J) = \tpqf(\bar{v}^\prime ~^\smallfrown \bar{w}, \bar{s}, J) = 
\tpqf(\bar{v}^\smallfrown \bar{w}^\prime, \bar{s}, J). \]
\end{itemize}
Why is this sufficient? Recalling that (\ref{eq:coll}) is an invariant of $\tpqf(\bar{v}^\smallfrown \bar{w}, \bar{s}, J)$, we will then have that 
\begin{align*}
\bar{a}_{\bar{v}, i} = & \bar{a}_{\bar{w}, j} \\
\bar{a}_{\bar{v}, i} = & \bar{a}_{\bar{w}^\prime, j} \\
\bar{a}_{\bar{v}^\prime, i} = &\bar{a}_{\bar{w}, j} \\
\end{align*}
so by transitivity of equality, $\bar{a}_{\bar{v}^\prime, i} = \bar{a}_{\bar{w}^\prime, j}.$  But now $\bar{v}^\prime, \bar{w}^\prime$ 
are witnesses to the collision which have a strictly lower number of $w-v$ crossings than $\bar{v}, \bar{w}$, whose number of crossings 
we had assumed to be minimal. This contradiction shows that, 
modulo the next step, 
we may indeed choose $\bar{v}$, $\bar{w}$ to have property $(\star)$. 

\br
\noindent\emph{Step 3: There is no nontrivial algebraicity in $J$.}
Suppose we are given a finite set $c_0 < \cdots < c_p$ from $J$, and $d \in J$ such that 
$c_r < d < c_{r+1}$. Then we claim the realizations of $\tpqf(d, \bar{c}, J)$ are dense in the interval 
$(c_r, c_{r+1})_J$. Why? This quantifier-free type is determined by the ordering $<$, the predicate which holds of $d$, 
and a partition of $[\{ c_0, \dots, c_p \}]^k$ into $X \cup Y$ so that 
the quantifer-free type specifies $\{ R(d,\tau) : \tau \in X \}$ and $\{ \neg R(d,\tau) : \tau \in Y \}$. 
Since the type already has a realization $d$, $X$ cannot contain any $(n+1)$-cliques. So this collection of 
conditions is consistent, and is realized densely in the interval $(c_r, c_{r+1})_J$ by $\aleph_0$-saturation. 

\br
\noindent\emph{Step 4: Using property $(\star)$ to contradict inconsistency.}
At this point in the proof, we have $\bar{v}, \bar{w}$, $i, j$ witnessing the contradiction, where $\bar{v}, \bar{w}$ 
satisfy property $(\star)$ from Step 2. We claim we can choose $\bar{z} \in \xr(J)$ such that 
\begin{equation}
\label{eq:55} \qftp(\bar{v}^\smallfrown\bar{w}, \bar{s}, J) = \qftp(\bar{v}^\smallfrown\bar{z}, \bar{s}, J) = 
\qftp(\bar{z}^\smallfrown\bar{w}, \bar{s}, J). 
\end{equation}
Why? 
Within each interval of $\bar{u}$, all the elements of $v$ are below all the elements of $w$ in the interval.  Suppose 
the number of elements of $w$ in the interval is $n_w$. 
Let $v_a$, $w_b$ be the maximal element of $v$ in the interval and the minimal element of $w$ in the interval, respectively. 
(Since $\bar{v}, \bar{w}$ were assumed to be in strictly increasing order, $v_{a-1}$, $w_{b+1}$ are the next largest and next smallest, 
respectively; the elements of $w$ in the interval are, in order, $w_{b} < \cdots < w_{b+n_w-1}$.) By induction on $\ell < n_w$, 
we apply the claim of Step 3 with $\bar{c}$ an enumeration of $\rn(\bar{v}) \cup \rn(\bar{w}) \setminus w_{b+\ell}$, 
and $d = w_{b+\ell}$, to choose a realization $w^\prime_{b+\ell}$ which is $<w_b$ (and necessarily strictly above $v_a$).   
Informally, one by one, we copy the elements of $w$
to the left.  Note that equation (\ref{eq:55}) implies a fortiori that $\qftp(\bar{z}, \bar{s}, J) = \qftp(\bar{t}, \bar{s}, J)$, which we called $\xr$. 
 
The result is $\bar{z}$ with the desired property, and note that within each interval of $\bar{u}$, all the elements of 
$\bar{v}$ there are below all the elements of $\bar{z}$ there which are in turn below all the elements of $\bar{w}$ there.

For the final contradiction, recall our original assumption (\ref{eq:coll}) which was an invariant of the quantifier-free type 
$\qftp(\bar{v}^\smallfrown \bar{w}, \bar{s}, J)$. Thus  
\begin{align*}
\bar{a}_{\bar{v}, i} = & \bar{a}_{\bar{w}, j} \\
\bar{a}_{\bar{z}, i} = & \bar{a}_{\bar{w}, j} \\
\bar{a}_{\bar{v}, i} = &\bar{a}_{\bar{z}, j} \\
\end{align*}
so by transitivity of equality, $\bar{a}_{\bar{z}, i} = \bar{a}_{\bar{z}, j}.$  This must be an invariant of 
$\qftp(\bar{z}, \bar{s}, J)$ but since this equals $\qftp(\bar{t}, \bar{s}, J)$, our original formula 
must have been inconsistent, contradicting (\ref{eq:cont0}). This completes the proof. 
\end{proof}

For the converse collection of claims, showing $\xc_{n,k}$-unsuperstability for the theories $T_{n,k}$, 
we begin with a warm-up, deriving a single instance of shearing for the case of $n=3, k=2$. We shall then upgrade this result 
to unsuperstability and to general $n,k$. However, in this warm-up case the notation is a bit simpler, 
and all the main ideas are represented. 

\begin{claim} \label{t32-claim}
The theory $T_{3,2}$ contains nontrivial $\xc_{3,2}$-shearing, coming from a formula which is a conjunction of positive instances of the 
edge relation.
\end{claim}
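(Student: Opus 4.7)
The plan is to exploit the ternary relation $R$ in the signature of $\mk_{3,2}$: although $R^I=\emptyset$, any $\aleph_0$-saturated $J\supseteq I$ in $\mk_{3,2}$ carries nontrivial $R$-triangles, since tetrahedron-freeness only forbids $4$-cliques. If one can build a $\mk$-indiscernible sequence $\langle a_t:t\in J\rangle$ inside a model of $T_{3,2}$ that faithfully transports $R^J$ into $R^N$, then a triangle $R^J(v,w,z)$ becomes a triangle $R^N(a_v,a_w,a_z)$, and the tetrahedron-free axiom of $T_{3,2}$ forbids any $x$ connected by $R$-edges to all three pairs from $\{a_v,a_w,a_z\}$. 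I will arrange three index-triples of a single quantifier-free type which pairwise share $v$, $w$, $z$, so that this tetrahedron-forbidding makes a conjunction of three positive edges inconsistent along the sequence, while each individual instance remains consistent.

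Concretely, take $I_0=\emptyset$ and $\bar{t}^*=(t^*_0,t^*_1,t^*_2)$ enumerating three elements of $I$ with $t^*_0<t^*_1<t^*_2$ and colors $q_0,q_1,q_2$; let $\xr=\tpqf(\bar{t}^*,\emptyset,I)$, which encodes the order, the colors, and $\neg R(t_0,t_1,t_2)$ since $R^I=\emptyset$. In an $\aleph_0$-saturated $J\supseteq I$ in $\mk_{3,2}$, $\aleph_0$-saturation yields distinct elements
\[ v<e_3<w<e_2<z<e_1 \]
of $J\setminus I$ with $v,e_3\in P^J_{q_0}$, $w,e_2\in P^J_{q_1}$, $z,e_1\in P^J_{q_2}$, satisfying $R^J(v,w,z)$ while $R^J$ fails on every other triple among these six elements; since only $(v,w,z)$ is asserted positively, no $4$-clique is forced, so the configuration is consistent with membership in $\mk_{3,2}$ and hence realized in $J$. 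Setting $\bar{u}^1=(v,w,e_1)$, $\bar{u}^2=(v,e_2,z)$, $\bar{u}^3=(e_3,w,z)$, each $\bar{u}^i$ has the correct order, colors, and $\neg R^J$, so lies in $\xr(J)$.

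Next I apply the Ramsey property of $\mk_{3,2}$ via the GEM-machinery to produce a template $\Phi\in\Upsilon[T_{3,2}]$ whose skeleton $\langle a_t:t\in J\rangle$ in $N=\GEM(J,\Phi)$ is $\mk$-indiscernible and satisfies
\[ N\models R(a_s,a_t,a_u) \iff J\models R(s,t,u) \]
for all $s,t,u\in J$: the tetrahedron-free structure $(J,R^J)$ embeds into some model of $T_{3,2}$ realizing exactly $R^J$ on its image, and Ramsey extraction preserves invariants of $\tpqf(\cdot,\emptyset,J)$, applied separately to $R$ and $\neg R$, so the transport property passes to the skeleton. Following Remark \ref{rmk:r-seq}, set $\bar{b}_{\bar{u}}=(a_{u_0},a_{u_1},a_{u_2})$ for $\bar{u}\in \xr(J)$ and send all other tuples from $J$ to the empty sequence; this yields a $\mk$-indiscernible $\mb$ over $\emptyset$ in $N$.

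Finally I verify Definition \ref{e5} for the formula
\[ \vp(x;y_0,y_1,y_2)=R(x,y_0,y_1)\wedge R(x,y_0,y_2)\wedge R(x,y_1,y_2) \]
with $M=N$, $A=\emptyset$, $I_1=\{t^*_0,t^*_1,t^*_2\}$, and $\bar{c}=\bar{b}_{\bar{t}^*}$. For any $\bar{u}\in \xr(J)$, transport gives $\neg R^N(a_{u_0},a_{u_1},a_{u_2})$, so $\vp(x;\bar{b}_{\bar{u}})$ is consistent in $N$ (no tetrahedron is forced), establishing nontriviality. But $\{\vp(x;\bar{b}_{\bar{u}^i}):i=1,2,3\}$ is contradictory: its conjunction demands $R(x,a_v,a_w)$, $R(x,a_v,a_z)$, and $R(x,a_w,a_z)$, and combined with $R(a_v,a_w,a_z)$ (from $R^J(v,w,z)$) this forces a tetrahedron on $\{x,a_v,a_w,a_z\}$, impossible in $T_{3,2}$. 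The main obstacle I foresee is the honest execution of the third paragraph, namely producing a $\mk$-indiscernible sequence in $N\models T_{3,2}$ whose ternary edge pattern is exactly $R^J$; I plan to handle it by first embedding $(J,R^J)$ into a model of $T_{3,2}$ realizing $R^J$ verbatim and then Ramsey-extracting a template that preserves this quantifier-free invariant, after which the color-constrained combinatorics producing $\bar{u}^1,\bar{u}^2,\bar{u}^3$ with shared $v,w,z$ reduces to a direct $\aleph_0$-saturation argument.
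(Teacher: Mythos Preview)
Your proposal is correct and follows essentially the same strategy as the paper's proof: transport $R^J$ to $R^N$ along a $\mk$-indiscernible sequence, use the formula $\vp(x;y_0,y_1,y_2)=\bigwedge_{i<j} R(x,y_i,y_j)$, and exhibit three tuples in $\xr(J)$ whose pairwise overlaps cover a triangle $R^J(v,w,z)$, forcing a tetrahedron. The only cosmetic difference is that the paper reuses the original $\mathbf{t_0},\mathbf{t_1},\mathbf{t_2}\in I$ as the ``filler'' coordinates (so its three witness tuples are $(\mathbf{t_0},v_1,v_2)$, $(v_0,\mathbf{t_1},v_2)$, $(v_0,v_1,\mathbf{t_2})$ with just three new elements $v_0,v_1,v_2$), whereas you introduce six fresh elements $v,w,z,e_1,e_2,e_3$; and the paper builds the indiscernible sequence directly by induction in a saturated $M$ rather than via Ramsey extraction from an embedding. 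Both variants are valid and the argument is the same.
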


\begin{proof}
\tcb{Write $\mk = \mk_{3,2}$}.
We start with $I = I_{\xc_{3,2}} \in \mk$, which is countable, linearly ordered, each element is named by a different predicate, and there are no instances of $R$. 
Choose $\mathbf{t_0}, \mathbf{t_1}, \mathbf{t_2} \in I$ such that $I \models P_0(\mathbf{t_0}) \land P_1(\mathbf{t_1}) \land P_2(\mathbf{t_2})$, 
so it follows that 
$\mathbf{t_0}, \mathbf{t_1}, \mathbf{t_2}$ are distinct and (without loss of generality) $\mathbf{t_0} < \mathbf{t_1} < \mathbf{t_2}$. 
Because these elements are from $I$, note that their 
quantifier-free type specifies there are no instances of $R$.  We will write $\bar{\mathbf{t}} = 
\langle \mathbf{t_0}, \mathbf{t_1}, \mathbf{t_2} \rangle$. 

Consider some $\aleph_0$-saturated $J \in \mk$ with $I \subseteq J$.  Let $M \models T_{n,k}$ 
be fairly saturated. 
Choose a $\mk$-indiscernible sequence $\langle a_t : t \in J \rangle$ so that for any distinct 
$t_{i_0}, t_{i_1}, t_{i_2}$ from $J$, 
\[  M \models R(a_{t_{i_0}, t_{i_1}, t_{i_2}})~ \iff ~ J \models R(t_{i_0}, t_{i_1}, t_{i_2}).          \]
(The point is that the forbidden configuration never occurs on $J$, so we can build the map indexing the indiscernible sequence, say, by induction.) 
Now recalling $\bar{\mathbf{t}}$ from the first paragraph, consider in $M$ the formula 
\[ \vp(x,\bar{a}_{\bar{\mathbf{t}}}) = R(x,a_{\mathbf{t_0}}, a_{\mathbf{t_1}}) \land R(x,a_{\mathbf{t_0}}, a_{\mathbf{t_2}}) 
\land R(x,a_{\mathbf{t_1}}, a_{\mathbf{t_2}}). \] 
By our choice of $\bar{\mathbf{t}}$ and our choice of the indiscernible sequence, 
$M \models \neg R(a_{\mathbf{t_0}}, a_{\mathbf{t_1}}, a_{\mathbf{t_2}})$ (because the triple of indices come from $I$, which has no 
instances of $R$) so $\vp(x,\bar{a}_{\bar{\mathbf{t}}})$  
is a consistent formula of $M$. 
Since $J$ is $\aleph_0$-saturated for $\mk$, we may find $v_0$, $v_1$, $v_2$ in $J$ such that:\footnote{recalling that $\qftp$ in 
$J$ is determined by ordering, the predicates $P_q$, and the instances of $R$.}
\begin{align*} 
\tpqf({\mathbf{t_0}} ~^\smallfrown {\mathbf{t_1}} ~^\smallfrown \mathbf{t_2}, \emptyset, I) = & 
\tpqf({v_0} ~^\smallfrown {v_1} ~^\smallfrown \mathbf{t_2}, \emptyset, J) \\ 
= & \tpqf({v_0} ~^\smallfrown {\mathbf{t_1}} ~^\smallfrown v_2, \emptyset, J) \\
= & \tpqf({\mathbf{t_0}} ~^\smallfrown {v_1} ~^\smallfrown v_2, \emptyset, J) 
\end{align*} 
\underline{and} $J \models R(v_0, v_1, v_2)$. 
Then writing 
\begin{align*}
\bar{t}_{\{ 1,2 \}}=& {\mathbf{t_0}} ~^\smallfrown {v_1} ~^\smallfrown v_2 \\
\bar{t}_{\{0,2\}} =& {v_0} ~^\smallfrown {\mathbf{t_1}} ~^\smallfrown v_2 \\
\bar{t}_{\{0,1\}} =& {v_0} ~^\smallfrown {v_1} ~^\smallfrown \mathbf{t_2}
\end{align*}
we see that writing $\xr = \tpqf(\bar{\mathbf{t}}, \emptyset, I)$ we have that 
\[ \{ \vp(x,\bar{a}_{\bar{t^\prime}}): \bar{t}^\prime \in {\xr}(J) \} \]
is inconsistent, because the set of formulas 
\[ \{ \vp(x,\bar{a}_{\bar{t}_{\{1,2\}}}), \vp(x,\bar{a}_{\bar{t}_{\{0,2\}}}), \vp(x,\bar{a}_{\bar{t}_{\{0,1\}}}) \} \] 
is already inconsistent, recalling that $M \models R(a_{v_0}, a_{v_1}, a_{v_2})$ and 
\begin{align*}
\vp(x,\bar{a}_{\bar{t}_{\{1,2\}}}) \vdash& R(x,a_{v_1}, a_{v_2}) \\
\vp(x,\bar{a}_{\bar{t}_{\{0,2\}}}) \vdash& R(x,a_{v_0}, a_{v_2}) \\
\vp(x,\bar{a}_{\bar{t}_{\{0,1\}}}) \vdash& R(x,a_{v_0}, a_{v_1}).
\end{align*} 
This completes the warm-up. 
\end{proof}

\begin{claim} \label{claim-tnk}
For any $n>k\geq 2$, $T_{n,k}$ is $\xc_{n,k}$-unsuperstable. 
\end{claim}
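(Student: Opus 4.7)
The proof will combine a natural generalization of the warm-up (Claim \ref{t32-claim}) to arbitrary $n > k \geq 2$ with the iteration scheme of Claim \ref{claim15a}. At each step the shearing formula will be
\[ \vp(x, \bar{y}) \; = \; \bigwedge_{S \subseteq \{0,\dots,n-1\},\, |S| = k} R(x,\, \bar{y} \rstr S), \]
where $\bar{y}$ has length $n$ and $\bar{y} \rstr S$ denotes the $k$-subtuple indexed by $S$; so $\vp$ asserts that $x$ completes every $k$-subset of $\bar{y}$ to a hyperedge.

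\textbf{Single step, general $n, k$.} Given a finite $I_* \subseteq I_{\xc_{n,k}} = \mathbb{Q}$, I would choose $\bar{\mathbf{t}} = (t_0, \dots, t_{n-1})$ in $I_{\xc_{n,k}} \setminus I_*$, each $t_i$ in a distinct fresh predicate $P_{q_i}$ and positioned above $I_*$, and set $\xr = \tpqf(\bar{\mathbf{t}}, I_*, I_{\xc_{n,k}})$ (so $\xr$ forbids any instance of $R$ on $\bar{\mathbf{t}}$). Pick an $\aleph_0$-saturated $J \supseteq I_{\xc_{n,k}}$ in $\mk_{n,k}$ and use saturation of $J$ to locate $v_0, \dots, v_{n-1} \in J$ with: (a) each $v_i \in P_{q_i}$ and positioned so that for every $k$-subset $S \subseteq \{0,\dots,n-1\}$, the tuple $\bar{t}_S$ obtained from $\bar{\mathbf{t}}$ by replacing the positions in $S$ by the corresponding $v_i$'s realizes $\xr$; and (b) the $v_i$'s form an $n$-clique, i.e.\ $J \models R(v_{i_0}, \dots, v_{i_k})$ for every $(k+1)$-subset of $\{0,\dots,n-1\}$. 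These constraints are consistent: any $R$-edge forbidden by $\bar{t}_S$ realizing $\xr$ lies on some $(k+1)$-subset of $\bar{t}_S$, which has only $k$ new coordinates and therefore touches at least one original $t$-coordinate, whereas the desired $R$-edges live on the $n$ new $v$-coordinates; and $n$-cliques are permitted because $\mk_{n,k}$ only forbids $(n+1)$-cliques. Now let $\langle a_t : t \in J \rangle$ be a $\mk_{n,k}$-indiscernible sequence in a sufficiently saturated model of $T_{n,k}$ chosen so that $R$ on the $a_t$'s mirrors $R$ on $J$ (exactly as in the warm-up). Then $\vp(x, \bar{a}_{\bar{\mathbf{t}}})$ is consistent—no $K_{n+1}$ forms since no $R$-edges hold among the $a_{t_i}$'s—while $\{\vp(x, \bar{a}_{\bar{t}_S}) : S \subseteq \{0,\dots,n-1\},\, |S|=k\}$ is inconsistent: a common realizer $x$ together with $a_{v_0}, \dots, a_{v_{n-1}}$ would carry all $\binom{n}{k}$ $x$-incident hyperedges (one from each formula, on the $k$ new coordinates) plus all $\binom{n}{k+1}$ non-$x$ hyperedges (from the $v$-clique), yielding a forbidden $K_{n+1}$ on $\{x, v_0, \dots, v_{n-1}\}$. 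Thus $\vp(x, \bar{a}_{\bar{\mathbf{t}}})$ shears over $I_*$ for $(I_*, I_* \cup \bar{\mathbf{t}})$.

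\textbf{Iteration.} Unsuperstability will follow the blueprint of Claim \ref{claim15a}. Enumerate $I_{\xc_{n,k}}$ as $\langle s_\ell : \ell < \omega \rangle$, set $I_m = \{s_\ell : \ell < m\}$, and inductively build an ascending chain $\Phi_0 \leq \Phi_1 \leq \cdots$ in $\Upsilon[T_{n,k}]$ together with partial types $p_0 \subseteq p_1 \subseteq \cdots$. At step $m$, I pick $\bar{\mathbf{t}}_m$ of $n$ fresh rationals above $I_m$ in distinct fresh predicates, expand $N_m = \GEM(J_m, \Phi_m)$ by auxiliary function symbols coding the $v$-witnesses and their $R$-edges (these play the role of the $\sigma_{n,1}, \sigma_{n,2}$ in Claim \ref{claim15a}), apply the Ramsey property for $\mk_{n,k}$ to produce $\Phi_{m+1}$, and set $p_{m+1} = p_m \cup \{\vp(x, \bar{a}_{\bar{\mathbf{t}}_m})\}$. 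The single-step argument above then witnesses $(I_m, I_{m+1})$-shearing of $\vp(x, \bar{a}_{\bar{\mathbf{t}}_m})$ over the parameters built so far, after reindexing to absorb each $\bar{\mathbf{t}}_m$ into the enumeration as in Claim \ref{claim15a}. Consistency of the limit type $p = \bigcup_m p_m$ in $\GEM(I_{\xc_{n,k}}, \Psi)$ for any $\Psi \geq \bigcup_m \Phi_m$ holds because every $\bar{\mathbf{t}}_\ell$ lies in $I_{\xc_{n,k}}$, whose $R$-relation is empty and whose skeleton therefore carries no $R$-edges among the $a_{\bar{\mathbf{t}}_\ell}$'s; so $x$ is only asked to have $R$-edges within each individual tuple, and no forbidden $K_{n+1}$ can arise across tuples.

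\textbf{Main obstacle.} The technical crux is the bookkeeping through the iteration: each Ramsey step must simultaneously install the new $v$-clique needed for step-$m$ shearing, preserve the clique- and shearing-witnesses from earlier steps, and avoid producing any forbidden $(n+1)$-clique among the accumulated parameters that would collapse the growing partial type. The design of $\xc_{n,k}$—with an $R$-free index model $I$, so that all new $R$-edges enter only through the saturated extensions $J_m$ and the corresponding auxiliary function symbols—makes this possible, but packaging the expansions cleanly, in the spirit of the disjointness clause (c) in the proof of Claim \ref{claim15a}, is the delicate point.
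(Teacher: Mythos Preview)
Your single-step argument is correct and essentially identical to the paper's: the same formula $\vp$, the same choice of $\bar{v}$ forming an $n$-clique in $J$ while each mixed tuple $\bar{t}_S$ (the paper writes $\bar{v}_u$) still realizes $\xr$, and the same contradiction via a forbidden $(n+1)$-clique.

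The iteration, however, takes a needlessly heavier route. You import the template/Ramsey machinery of Claim~\ref{claim15a}, expanding $\GEM(J_m,\Phi_m)$ by auxiliary function symbols at each step. That machinery was needed in Claim~\ref{claim15a} because for an arbitrary context with $\ocirc$ the shearing-witnessing sequence bore no a priori relation to the skeleton and had to be coded by new Skolem functions appearing in the formulas of $p$. Here the context $\xc_{n,k}$ is purpose-built so that a $\mk$-indiscernible sequence $\langle a_t : t \in J\rangle$ mirroring $R^J$ already witnesses shearing directly, and the formulas $\vp(x,\bar{a}_{\bar{\mathbf t}_m})$ in your type use only skeleton elements. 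Your auxiliary function symbols therefore have nothing to do, and it is unclear from your description what they would code.

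The paper instead works directly with Definition~\ref{m5a} in the monster model for $T_{n,k}$, with no templates at all. Given $I_m, B_m, p_m$, one picks fresh $t_0,\dots,t_{n-1}\in I\setminus I_m$, an $\aleph_0$-saturated $J\supseteq I$, and a $\mk$-indiscernible $\langle a_t:t\in J\rangle$ in the monster model mirroring $R^J$; then $I_{m+1}=I_m\cup\{t_i:i<n\}$, $B_{m+1}$ is any elementary submodel containing $B_m\cup\{a_t:t\in J\}$, and $p_{m+1}=p_m\cup\{\vp(x,\bar a_{\bar t})\}$. The ``main obstacle'' you flag---consistency of the accumulating type---is in fact a one-liner: since $\vp$ is a conjunction of positive hyperedges and the new parameters $a_{t_0},\dots,a_{t_{n-1}}$ are disjoint from $B_m$, no forbidden configuration can form, so $p_{m+1}$ is automatically consistent. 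This is exactly what the hypergraph (as opposed to graph) edge buys you, and it makes the iteration trivial rather than the delicate point.
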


\begin{proof}
Now we fix $n>k\geq 2$ and let us prove that $T_{n,k}$ is $\xc_{n,k}$-unsuperstable. 
For this proof, $I = I_{\xc_{n,k}}$ and $\mk = \mk_{\xc_{n,k}}$.   Recall the meaning of $n,k$: 
in $T_{n,k}$ the edge relation $R$ has arity $k+1$, and the forbidden configuration has arity 
$n+1$. 

To satisfy Definition \ref{m5a} in this case, \tcb{which recall describes a sequence indexed by $m$ witnessing 
unsuperstability}, it would suffice to show that whenever we are given 
a finite $m < \omega$ and sets $I_m, B_m, p_m$ such that $I_m$ is a finite subset of $I$, 
$B_m$ is a subset of the monster model of $T_{n,k}$, and $p_m$ is a partial non-algebraic type over $B_m$, 
we can find $I_{m+1}$, $B_{m+1}$, $p_{m+1}$ such that $I_{m+1}$ is finite and 
$I_m \subseteq I_{m+1} \subseteq I$, $B_{m+1} \supseteq B_m$ is a set of parameters in 
the monster model for $T_{n,k}$, and $p_{m+1}$ is a partial non-algebraic type over $B_{m+1}$ which extends 
$p_m$, \emph{and} such that $p_{m+1}$ $\xc_{n,k}$-divides over $B_m$. 

Suppose then that $I_m, B_m, p_m$ are given. 

Choose distinct $t_0, \dots, t_{n-1}$ from $I \setminus I_m$, so that $0 \leq i < j <n$ implies $t_i < t_j$ in $I$. 
Since we are in $I$, the element $t_i$ is named by the predicate $P_{t_i}$, and since $t_i \neq t_j$ their 
corresponding predicates are necessarily distinct. Also, since we are in $I$, there are necessarily no instances 
of $R$ on $\{ t_0, \dots, t_{n-1} \}$ in $I$.  Let $\bar{t} = \langle t_0, \dots, t_{n-1} \rangle$. Let 
$\xr = \tpqf(\bar{t}, {I_m}, I)$. 

Fix $J \supseteq I$ such that $J \in \mk$ and $J$ is $\aleph_0$-saturated. 
Choose a sequence $\langle a_j : j \in J \rangle$ which is $\mk$-indiscernible and such that 
\begin{equation}
\label{eq:rj} M \models R(a_{j_{i_0}}, \dots, a_{j_{i_{k}}}) ~ \iff ~ J \models R(j_{i_0}, \dots, j_{i_{k}}). 
\end{equation}
Now we will need some notation for two different but related things. First, recalling $\bar{t}$ from the previous paragraph, 
$\bar{a}_{\bar{t}}$ is now well defined (it is the tuple $\langle a_{j_i} : i < n \rangle$). For $u \in [n]^\ell$, let 
$\bar{t} \rstr u$ denote the $\ell$-tuple of elements of $J$ given by $\langle t_i : i \in u \rangle$, thus we let 
\[ \bar{a}_{\bar{t} \rstr u} \mbox{ denote the $\ell$-tuple of elements of $M$ given by } \langle a_{t_i} : i \in u \rangle. \]
Informally, we select an appropriate $\ell$-tuple from the sequence of length $n$. 
(We will mostly use $\ell = k$ or $\ell = k+1$.)
Second, if $\bar{v} = \langle v_0, \dots, v_{n-1} \rangle$ is any sequence from $\xr(J)$, and $u \in [n]^k$, let 
\begin{align*} \bar{v}_u  & \mbox{ denote the $n$-tuple of elements of $J$ given by } \langle t^\prime_i : i < n \rangle \\
& \mbox{ where } t^\prime_i = v_i \mbox{ if $i \in u$ } \mbox{ and } t^\prime_i = t_i \mbox{ if $i  \notin u$ }. 
\end{align*}
Informally, this is $\bar{v}$ where the $i$th element is replaced by $t_i$ if $i \notin u$. 

The next step is to choose $\bar{v} \in \xr(J)$ (and $\bar{v}$ will be fixed as this choice for the rest of the proof) 
so that:\footnote{Notice that condition (ii) is legal as the forbidden configuration has size $n+1$. 
Notice also that $u$ has size $k$, and the edge relation has arity $k+1$. Each $\bar{v}_u$ has 
`most,' i.e. all but $k$, of its elements from $\bar{t}$; this will be more noticeable when $n >> k$. So 
a priori, substituting in a few elements from $\bar{v}$ into the sequence $\bar{t}$ should not cause 
edges to appear.} 
\begin{align*} \mbox{(i)}& ~\tpqf(\bar{t}, I_m, I) = \tpqf(\bar{v}_u, I_m, J) \mbox{ for all $u \in [n]^k$ } \\
\mbox{ \emph{and}~ (ii)}& ~J \models R(\bar{v} \rstr w)~ \mbox{ for all $w \in [n]^{k+1}$.}  
\end{align*}
\begin{quotation}
\noindent For example, it would suffice to choose $\langle v_i : i < n \rangle$ by induction on $i$ as follows. 
Remember we assumed that $J \models t_0 < \cdots < t_n$, with $J \models P_{t_i}(t_i)$. 
Choose $v_i$ to be any element of $J$ satisfying the following two conditions: 
\begin{itemize}
\item[(a)] $J \models t_i < v_i < t_{i+1}$ and $P_{t_i}(v_i)$.  
\item[(b)]  partition $ [ \{ t_0, \dots, t_n \} \cup \{ v_j : j < i \} ]^k$ into two sets: 
\\ $Y = [ \{ v_j : j < i \} ]^k$, and $X$ is the complement of $Y$ (so members of $X$ contain at least one $t_\ell$). 
Ask that $t_i$ satisfies: $\tau \cup \{ t_i \} \in R^J$ for $\tau \in Y$, and 
$\tau \cup \{ t_i \} \notin R^J$ for $\tau \in X$. 
\end{itemize}
At each step, this choice is possible because these conditions will not produce 
an $(n+1)$-clique, and $J$ is $\aleph_0$-saturated.  Since quantifier-free type in $J$ is determined by the ordering 
$<$, the predicates $P_\ell$, and the edge relation $R$, this suffices. 
\end{quotation}
Now in $M$, consider the formula
\[ \vp(x,\bar{a}_{\bar{t}}) = \bigwedge_{u \in [n]^k} R(x,\bar{a}_{\bar{t} \rstr u}). \]
Since there are no instances of $R$ on $\bar{t}$ in $J$, there are correspondingly no instances of $R$ on 
$\{ a_{t_i} : i < n \}$ in $M$ by equation (\ref{eq:rj}), so this formula is consistent. 
However, the set of formulas 
\[  \{ \vp(x,\bar{a}_{\bar{t}^\prime}) : \bar{t}^\prime \in \xr(J) \} \]
is inconsistent, because by condition (a) above, it includes the set of formulas 
\[ \{ \vp(x,\bar{a}_{\bar{v}_u}) : u \in [n]^k \} \]
which is inconsistent as for each $u \in [n]^k$, $\vp(x,\bar{a}_{\bar{v}_u}) \vdash R(x,\bar{a}_{\bar{v} \rstr u})$, and also  
$M \models R(\bar{a}_{\bar{v} \rstr w})$ for each $w \in [n]^{k+1}$ by condition (ii) and equation (\ref{eq:rj}); together, these would 
give us an $(n+1)$-clique which is forbidden.   This is our desired instance of $\xc_{n,k}$-shearing.

To finish, let $I_{m+1} = I_m \cup \{ t_i : i < n \}$. Let $B_{m+1}$ be an elementary submodel of $M$ which contains 
$B_m \cup \{ a_t : t \in J \}$.  Let $p_{m+1} = p_m \cup \{ \vp(x,\bar{a}_{\bar{t}}) \}$.  (Note that in our theory $T_{n,k}$, 
$p_{m+1}$ is consistent simply because the parameters from $\{ a_{t_i} : i < n \}$ are disjoint to $B_m$.)

This completes the proof. 
\end{proof}

\begin{theorem} \label{t:disting} 
For each $n>k\geq 2$, there is a countable context $\xc$ such that $\trg$ is $\xc$-superstable but 
$T_{n,k}$ is $\xc$-unsuperstable.
\end{theorem}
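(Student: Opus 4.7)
The plan is to combine the two claims immediately preceding the theorem: take the context $\xc = \xc_{n,k}$ from Definition \ref{d:xcnk}. This is a countable context by construction (the underlying $I$ has domain $\mathbb{Q}$, linearly ordered with each rational named by its own unary predicate and $R^I = \emptyset$), and $\mk_{n,k}$ is a Ramsey class (already verified via Scow's reduction to finite substructures and the Ne\v{s}et\v{r}il--R\"{o}dl theorem). So it meets all the standing assumptions on a countable context, and it is the right candidate for witnessing the separation.

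First I would invoke Claim \ref{trg-ss} to conclude that $\trg$ is $\xc_{n,k}$-superstable. Then I would invoke Claim \ref{claim-tnk} to conclude that $T_{n,k}$ is $\xc_{n,k}$-unsuperstable. Together these give the statement of the theorem.

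The main content of the argument is of course already contained in those two claims, not in this assembly step. If I were expanding the proof, the heart of the matter is the asymmetry between the random graph and $T_{n,k}$ with respect to $\xc_{n,k}$: for $\trg$, having ``enough'' edges in an $\aleph_0$-saturated extension $J \supseteq I$ is harmless because any potential collision between parameters from a $\mk$-indiscernible sequence can be resolved by density of realizations (Step 3 of \ref{trg-ss}) combined with a rearrangement argument ($(\star)$ in Step 2) that transports witnesses to collisions to configurations that contradict the consistency of the original $\vp(x,\bar{a}_{\bar{t}})$; while for $T_{n,k}$, the very possibility that $R$ appears in $J$ on tuples of $J \setminus I$ is precisely what lets us manufacture inconsistency in a sequence of positive instances of the edge relation whose parameter tuple $\bar{t} \subseteq I$ is $R$-free in $I$. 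The ``obstacle,'' therefore, is really the proof of Claim \ref{trg-ss}; the present theorem is just the summarizing statement.
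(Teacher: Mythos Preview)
Your proposal is correct and matches the paper's own proof essentially verbatim: the paper also takes $\xc = \xc_{n,k}$ from Definition \ref{d:xcnk}, invokes Claim \ref{trg-ss} for $\xc$-superstability of $\trg$, and Claim \ref{claim-tnk} for $\xc$-unsuperstability of $T_{n,k}$. Your additional commentary on countability and the heuristic contrast between the two claims is extra but accurate.
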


\begin{proof}  
Use $\xc_{n,k}$ from Definition \ref{d:xcnk}.  By Claim \ref{trg-ss}, $\trg$ is $\xc$-superstable. By Claim \ref{claim-tnk}, 
$T_{n,k}$ is $\xc_{n,k}$-unsuperstable. 
\end{proof}

\br

\section{Some questions and future directions}

\setcounter{theoremcounter}{0}
We also refer the reader to the open problems section of \cite{MiSh:1149}.  

\begin{problem} \label{p:ocirc} 
Characterize the theories which have the same shearing as the random graph, that is, the theories $T$ which satisfy: for 
every countable context $\xc$, $T$ is $\xc$-unsuperstable if and only if $\xc$ has property $\ocirc$. 
\end{problem}

\begin{rmk}  
By inspection, the theories $T_\xm$ from \cite{MiSh:1167} appear to have the same shearing as the random graph. This suggests the 
class from Problem \ref{p:ocirc} is interesting, spanning classes in Keisler's order and the interpretability order. 
\end{rmk}

\begin{problem}  
Investigate the relation of shearing and dividing in ``basic'' non-simple theories such as $T_{\operatorname{feq}}$, for example, 
investigate whether it is possible to characterize there the shearing which does not come from dividing.  
\end{problem} 

\begin{problem} 
The equivalence of shearing and dividing in stable theories is explained by the phenomenon of ``weak definitions,'' see explanation and 
references in the proof of \ref{c:unstable} above.  The existence of weak definitions is a property of $(T, \vp)$ and of a context.  
Thus the methods of this paper open up the possibility of developing a theory of the 
\emph{relative strength} of weak definitions, as contexts vary, in simple unstable theories. 
\end{problem}

\begin{rmk}  
The methods of this paper also represent an interesting interaction between elementary classes $($the theories $T$$)$ 
and a priori non-elementary classes $($the index model classes $\mk$$)$.  
\end{rmk}

\begin{rmk}
Since, by the proofs above, shearing is strictly weaker than dividing, 
it necessarily fails some of the axioms for independence relations in simple theories \cite{kp};  
it may be useful to sort out which hold and which do not. 
\end{rmk}

\begin{qst}  
Is there an analogous  
axiomatic characterization of shearing?
\end{qst}

\begin{rmk} 
The statement and proof of \ref{c:unstable} implicitly raise the analogous question for NIP theories, 
i.e., it may be worth while to explicitly sort out what can be said about shearing for 
formulas {without} the independence property. 
\end{rmk}

\vspace{10mm}

\end{document}